\newtheorem{thm}{Theorem}[section]
\newtheorem{lem}[thm]{Lemma}
\newtheorem{prop}[thm]{Proposition}
\newtheorem{cor}[thm]{Corollary}
\theoremstyle{definition}
\newtheorem{defn}[thm]{Definition}
\newtheorem{rem}[thm]{Remark}
\newcommand{\R}{\mathbb{R}} 
\newcommand{\N}{\mathbb{N}}
\newcommand{\Z}{\mathbb{Z}}
\newcommand{\F}{\mathbb{F}}
\title{Restricted sumsets in multiplicative subgroups}
\author{Chi Hoi Yip}
\address{School of Mathematics\\ Georgia Institute of Technology\\ Atlanta, GA 30332\\ United States}
\email{cyip30@gatech.edu}
\keywords{restricted sumset, additive decomposition, multiplicative subgroup, Sidon set, Cayley sum graph}
\subjclass[2020]{Primary 11B30, 11P70; Secondary 11B13, 05C25}
\begin{document}

\begin{abstract}
We establish the restricted sumset analogue of the celebrated conjecture of S\'{a}rk\"{o}zy on additive decompositions of the set of nonzero squares over a finite field. More precisely, we show that if $q>13$ is an odd prime power, then the set of nonzero squares in $\F_q$ cannot be written as a restricted sumset $A \hat{+} A$, extending a result of Shkredov. More generally, we study restricted sumsets in multiplicative subgroups over finite fields as well as restricted sumsets in perfect powers (over integers) motivated by a question of Erd\H{o}s and Moser. We also prove an analogue of van Lint-MacWilliams' conjecture for restricted sumsets, which appears to be the first analogue of Erd{\H{o}}s-{K}o-{R}ado theorem in a family of Cayley sum graphs.
\end{abstract}

\maketitle

\section{Introduction}

Throughout the paper, let $p$ be an odd prime and $q$ a power of $p$. Let $\F_q$ be the finite field with $q$ elements. Let $d \mid (q-1)$ such that $d>1$. We denote $S_d(\F_q)=\{x^d: x \in \F_q^*\}$ to be the subgroup of $\F_q^*$ with order $\frac{q-1}{d}$.  If $q$ is assumed to be fixed, for brevity, we simply write $S_d$ instead of $S_d(\F_q)$.  

A celebrated conjecture of S\'{a}rk\"{o}zy \cite{S12} asserts that if $p$ is a sufficiently large prime, then $S_2$ does not admit a {\em nontrivial additive decomposition}, that is, $S_2$ cannot be written as $S_2=A+B$, where $A, B \subset \F_p$ and $|A|, |B| \geq 2$. Recall that the {\em sumset} $A+B$ is defined to be $A+B=\{a+b: a \in A, b \in B\}$.  This conjecture is still widely open. Recently, Hanson and Petridis \cite{HP} made important progress on this conjecture: they showed that S\'{a}rk\"{o}zy's conjecture holds for almost all primes. More generally, one can study the additive decomposition problem for any multiplicative subgroup of a finite field; see for example \cite{HP, S16, S20, S13, Y24}.

We establish the restricted sumset analogue of S\'{a}rk\"{o}zy's conjecture below, namely that the set of nonzero squares in a finite field does not admit a restricted sumset decomposition. Recall that for a given set $A$, its \emph{restricted sumset} is given by $A \hat{+} A=\{a+b: a,b \in A, a \neq b\}$. 

\begin{thm} \label{thm:main}
If $q>13$ is an odd prime power, then the set of nonzero squares in $\F_q$ cannot be written as a restricted sumset of a set. In other words, $S_2 \neq A\hat{+}A$ for any $A \subset \F_q$.
\end{thm}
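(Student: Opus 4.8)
For contradiction, assume $S_2 = A\hat{+}A$ and put $k = |A|$. The plan is: first pin down $k$, then show $A$ is nearly a Sidon set, and finally derive a contradiction from the rigidity of the power sums of $A$.

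\emph{Step 1: the size of $A$.} Since $|A\hat{+}A|\le\binom k2$, the hypothesis $|A\hat{+}A|=|S_2|=(q-1)/2$ forces $k\ge\frac{1+\sqrt{4q-3}}{2}$, in particular $k>\sqrt q$. In the other direction, letting $\chi$ be the quadratic character, the clique property ``every sum of two distinct elements of $A$ is a square'' gives $\sum_{a\ne b,\ a,b\in A}\chi(a+b)=k(k-1)$; estimating $\bigl|\sum_{a,b\in A}\chi(a+b)\bigr|\le k\sqrt{q-k}$ by Cauchy--Schwarz (using $\sum_a\chi((a+b)(a+b'))=-1$ for $b\ne b'$) together with $\bigl|\sum_{a\in A}\chi(2a)\bigr|\le k$, we get $k(k-1)\le k\sqrt{q-k}+k$, hence $k\le\frac{3+\sqrt{4q-7}}{2}$. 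As $\frac{3+\sqrt{4q-7}}{2}-\frac{1+\sqrt{4q-3}}{2}<1$, either the admissible interval contains no integer --- and we are done --- or $k$ is uniquely determined and $k^2-3k+4\le q\le k^2-k+1$. Writing $r(y)=\#\{(a,b)\in A^2:a\ne b,\ a+b=y\}$, this means $r\ge 2$ on $S_2$, $r=0$ off $S_2$, and the ``Sidon defect'' $m:=\sum_{y\in S_2}(r(y)-2)=k^2-k+1-q$ satisfies $0\le m\le 2k-3$.

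\emph{Step 2: power sums.} For $1\le j\le (q-1)/2-1$ one has $\sum_{y\in S_2}y^j=\tfrac12\bigl(\sum_{x\ne0}x^j+\sum_{x\ne0}\chi(x)x^j\bigr)=0$, since neither $j$ nor $j+(q-1)/2$ is divisible by $q-1$ in this range. Expanding $\sum_{\{a,b\}\subseteq A}(a+b)^j$ by the binomial theorem, with $p_i=\sum_{a\in A}a^i$ (so $p_0=k$), gives
\[
(2k-2^j)\,p_j \;=\; 2\,\eta_j-\sum_{i=1}^{j-1}\binom ji p_i p_{j-i},\qquad \eta_j:=\tfrac12\sum_{y\in S_2}\bigl(r(y)-2\bigr)y^j,
\]
for $1\le j\le (q-1)/2-1$, where the error $\eta_j$ vanishes identically when $m=0$. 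In the Sidon case $m=0$, since $p\nmid k(k-1)$ an induction on $j$ forces $p_j=0$ for every $j$ in the range except the sparse set of indices with $2^{j-1}\equiv k\pmod p$; when the characteristic exceeds $k$ and no such obstruction occurs below $k$, we get $p_1=\cdots=p_k=0$, so by Newton's identities $e_1=\cdots=e_k=0$ and $\prod_{a\in A}(X-a)=X^k$ --- impossible for a set of $k\ge2$ distinct elements.

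\emph{Step 3: completing the argument, and the crux.} The main obstacle is to run Step 2 in full generality and to locate the threshold $q>13$. Three points need care. (i) \emph{Small characteristic}: when $\operatorname{char}\F_q$ is small, Newton's identities no longer recover the elementary symmetric functions from the power sums, so one must instead use the Frobenius structure --- the vanishing of a long initial segment of the $p_j$ should force $\prod_{a\in A}(X-a)$ to be sparse (or $\F_p$-linear in a suitable sense), which must then be shown incompatible with $A\hat{+}A=S_2$. (ii) \emph{The defect $m>0$}: the cleanest route is to first prove $m=0$ --- i.e.\ that $A$ is a genuine Sidon set --- by bounding the additive energy of $A$ against the near-flat Fourier transform of $\mathbf{1}_{S_2}$ (modulus $\approx\sqrt q/2$); this is delicate because all the ``soft'' identities tend to collapse back to the single relation $m=k^2-k+1-q$, so a genuine use of Weil-type input seems necessary. (iii) \emph{Exceptional cases}: if $k$ is congruent to a low power of $2$ modulo $p$, or if the usable range $1\le j\le (q-1)/2-1$ is shorter than $k$, the recursion stalls and $\prod_{a\in A}(X-a)$ is only forced to be a binomial or trinomial; one then checks directly that no such $A$ realizes $A\hat{+}A=S_2$ unless $q\in\{7,13\}$, which correspond exactly to the Sidon sets $\{3,5,6\}\subset\F_7$ and $\{0,4,10,12\}\subset\F_{13}$ (note both satisfy $\sum_{a\in A}a=\sum_{a\in A}a^2=0$, as Step 2 predicts). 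Collecting all cases gives $q\le 13$.
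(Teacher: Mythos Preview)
Your proposal is not a proof: Step~3 is a list of obstacles rather than an argument. You never establish that the Sidon defect $m$ vanishes --- you only say ``a genuine use of Weil-type input seems necessary'' --- and without $m=0$ the power-sum recursion of Step~2 never starts. Even granting $m=0$, the recursion $(2k-2^j)p_j=-\sum_{i=1}^{j-1}\binom{j}{i}p_ip_{j-i}$ stalls whenever $2^{j-1}\equiv k\pmod p$ for some $j\le k$, and you give no mechanism to handle this beyond ``one then checks directly.'' The small-characteristic case (Newton's identities do not recover the $e_i$ from the $p_i$ when $p\le k$) is likewise only described as something that ``must then be shown incompatible.'' Steps~1--2 are a reasonable setup, but the content of the theorem lies precisely in the parts you have deferred.

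For contrast, the paper's route is quite different and sidesteps all three of your obstacles at once. A Stepanov-type auxiliary-polynomial argument (Proposition~\ref{prop:main} and Proposition~\ref{prop: even}, combined in Corollary~\ref{cor:Sidon}) shows unconditionally that if $A\hat+A=S_2$ then $A$ is a Sidon set and $q=|A|^2-|A|+1$ \emph{exactly} --- no defect, no recursion, no characteristic restriction. From there the prime case $q=p$ is Shkredov's earlier theorem, while for $q=p^r$ with $r\ge2$ the equation $p^r=N^2-N+1$ is a case of the Nagell--Ljunggren equation, whose only solution with $r\ge2$ is $q=343$, $N=19$; a short computer search then rules this out. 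So the paper trades your analytic/combinatorial endgame for a Diophantine citation plus a finite check. Your strategy is more self-contained in spirit, but as written the hard steps are not carried out.
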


As one of the fundamental objects, restricted sumsets appear frequently in the study of additive number theory and additive combinatorics. For example, they appear in the celebrated Erd\H{o}s–Heilbronn conjecture (an analogue of Cauchy-Devanport theorem for restricted sumsets), first confirmed by Dias da Silva and Hamidoune \cite{DH94} (see also \cite{ANR96} for a different proof by Alon, Nathanson, and Ruzsa). They also naturally appear in a question of Erd\H{o}s \cite{E63} and Moser \cite{M64} (independently) related to perfect squares, as well as the study of cliques in Cayley sum graphs \cite{G05, GM16}. In particular, we will also discuss some implications of our results on these two types of problems.

Our main motivation for establishing Theorem~\ref{thm:main} is to extend a result due to Shkredov \cite{S14}, where he showed that if $p>13$ is a prime, then $S_2 \subset \F_p$ cannot be written as $A \hat{+}A$. His proof is very delicate~\footnote{On the other hand, it is easy to show if $q \geq 5$ is an odd prime power and $A \subset \F_q$, then $S_2 \neq A+A$; a simple proof can be found in \cite[Theorem 3.2]{S14} as well as \cite[Remark 4.3]{Y24}. We refer to \cite{Y24} for a general discussion of expressing a multiplicative subgroup as $A+A$.}: it is based on Fourier analytic methods and it also uses some results from the theory of perfect difference sets. He~\footnote{private communication} remarked that it is challenging to extend his proof over a general finite field with prime power order, mainly due to a corresponding result in the theory of perfect difference sets seems to be absent. He also remarked that his proof cannot be extended to study the same problem for multiplicative subgroups with index at least $3$ (that is, $S_d$ with $d \geq 3$). Indeed, our proof of Theorem~\ref{thm:main} is completely different from his proof. Moreover, our techniques can be used to handle general multiplicative subgroups with some additional assumptions. In particular, we prove the following result, which is of a probabilistic flavor. 

\begin{thm}\label{thm:density}
Let $s$ be a positive integer and $d \geq 3$. If $s$ is even, further assume that $d$ is not twice a perfect square. Among the set of primes $p \equiv 1 \pmod d$, the lower asymptotic density of primes $p$ such that there is no $A \subset \F_{p^{s}}$ with $A\hat{+}A=S_d(\F_{p^{s}})$ is at least $1-\frac{(d-1)^r}{4d^r}$, where $r=\lceil s/2 \rceil -1$.
\end{thm}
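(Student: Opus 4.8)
The plan is to argue by contradiction: assume $A\subset\F_{p^{s}}$ satisfies $A\hat{+}A=S_d(\F_{p^{s}})$, and show this forces a constraint that fails for all but a controlled set of primes $p\equiv 1\pmod d$. Write $q=p^{s}$ and $N=|A|$.

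\textbf{Step 1 (pinning the size of $A$).} Since $0\notin S_d$, first note $A\cap(-A)\subseteq\{0\}$. A lower bound $N(N-1)\ge|S_d|=(q-1)/d$ is immediate since every element of $S_d$ has an ordered representation as a restricted sum, so $N\gg\sqrt{q/d}$. For an upper bound, let $\chi$ be the multiplicative character of order $d$; because $a+b\in S_d$ for all $a\ne b$ in $A$, we have $\sum_{a\ne b\in A}\chi(a+b)=N(N-1)$. Expanding, applying Cauchy--Schwarz, and invoking Weil's bound for the M\"{o}bius-transformed character sums $\sum_{x}\chi\!\left(\tfrac{a+x}{a'+x}\right)$ yields $N(N-1)\ll N^{3/2}q^{1/4}$, hence $N\ll\sqrt q$. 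Together with the Erd\H{o}s--Heilbronn/Dias da Silva--Hamidoune bound \cite{DH94,ANR96} (in the form valid over $\F_q$), which also caps $N\le\frac{q-1}{2d}+O(1)$ unless $A$ sits in a coset of a proper subfield, one concludes $N\asymp\sqrt q$; in particular $N$ has essentially the size of the subfield $\F_{p^{m}}$ with $m=\lceil s/2\rceil$.

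\textbf{Step 2 (descent to a subfield).} The decomposition is rigid under dilation and Frobenius: $(\lambda A)\hat{+}(\lambda A)=S_d$ for every $\lambda\in S_d$, and $\phi(A)\hat{+}\phi(A)=S_d$ for the Frobenius $\phi(x)=x^{p}$. The plan is to use this, together with $N\asymp\sqrt q$, to force $A$ (after multiplying by a suitable $\lambda\in S_d$) into a multiplicative translate $c\,\F_{p^{m}}$ of the subfield $\F_{p^{m}}$, $m=\lceil s/2\rceil$. The mechanism is that $S_d(\F_q)\cap\F_{p^{m}}^{*}$ is a subgroup of $\F_{p^{m}}^{*}$ of index $\delta_m\mid d$, and along the tower $\F_p\subset\F_{p^{2}}\subset\cdots\subset\F_{p^{m}}$ these indices (as seen on the restricted sumset, after discarding the $2\cdot A$ correction term that distinguishes $A\hat{+}A$ from $A+A$) behave multiplicatively, forcing each successive intersection to be "as small as possible" and $A$ to be trapped in the bottom layer. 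Once $A=cB$ with $B\subset\F_{p^{m}}$, we get $B\hat{+}B=c^{-2}S_d(\F_q)\subset\F_{p^{m}}$, hence $S_d(\F_q)\subset c^{2}\F_{p^{m}}$ and $(q-1)/d\le p^{m}=p^{\lceil s/2\rceil}$; since $\lceil s/2\rceil<s$ for $s\ge 2$, this is false for all large $p$, a contradiction (the case $s=1$ is handled directly by the size incompatibility in Step 1 combined with the quadratic considerations of Step 3).

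\textbf{Step 3 (the exceptional set of primes).} The descent in Step 2 is available only when, at each of the $r=\lceil s/2\rceil-1$ stages $\F_{p^{2j}}\subset\F_{p^{2j+2}}$, a fixed algebraic number $\alpha_j$ — a generator of the relevant quadratic extension, or equivalently the discriminant-type element detecting whether the auxiliary polynomial stays irreducible — remains a non-$d$-th power in $\F_q$. By the Chebotarev density theorem applied to $\mathbb{Q}(\zeta_d,\alpha_j^{1/d})/\mathbb{Q}$, among primes $p\equiv 1\pmod d$ (those splitting completely in $\mathbb{Q}(\zeta_d)$) the proportion for which $\alpha_j$ is a $d$-th power equals $1/d$, so the bad event at stage $j$ has density $\le(d-1)/d$; after checking that the $r$ Kummer extensions are linearly disjoint over $\mathbb{Q}(\zeta_d)$, these probabilities multiply, bounding the exceptional set by $\big(\tfrac{d-1}{d}\big)^{r}$ times the density of bad primes arising from the bottom stage $\F_{p^{2}}$. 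At that bottom stage $-1$ is automatically a square, so one cannot use $\alpha_0$; instead the residual decompositions are tied to quadratic elements, which can be ruled out (giving the extra factor $\tfrac14$) precisely when $d$ is not twice a perfect square, explaining both that hypothesis and the bare bound $3/4$ when $s\in\{1,2\}$. Altogether the exceptional density is $\le\tfrac14\big(\tfrac{d-1}{d}\big)^{r}$, which is the assertion.

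The main obstacle is Step 2: converting the metric information $N\asymp\sqrt q$ plus the $S_d$- and Frobenius-invariance into an \emph{honest} containment of $A$ in a subfield coset — this is the analog, for higher index $d$ and for restricted rather than full sumsets, of the rigidity underlying Shkredov's \cite{S14} and Hanson--Petridis' \cite{HP} arguments, and it is where the restricted-sumset structure must be exploited carefully so that the $2\cdot A$ correction term does not spoil the multiplicativity of the indices $\delta_m$ along the tower. Establishing the linear disjointness in Step 3 so that Chebotarev genuinely multiplies is a secondary but necessary point.
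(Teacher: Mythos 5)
There is a genuine gap, and it is located exactly where you flag it: Step 2. Your descent mechanism would force $A$ (up to dilation) into a subfield coset $c\,\F_{p^{m}}$, but this is incompatible with what the hypothesis $A\hat{+}A=S_d$ actually implies. Since $|A\hat{+}A|=\frac{q-1}{d}$ and $|A\hat{+}A|\le\binom{|A|}{2}$, any such $A$ has $|A|(|A|-1)\ge \frac{2(q-1)}{d}$, i.e.\ $A$ realizes nearly the maximal possible number of distinct pairwise sums --- it is forced to be (almost exactly) a Sidon set. A subfield coset is the extreme opposite: $c\F_{p^m}\hat{+}\,c\F_{p^m}$ has only about $p^{m}\approx\sqrt q$ elements, not $\approx q/d$. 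So the containment you aim for in Step 2 can never hold, the ``contradiction'' you derive from it is vacuous, and nothing in the proposal actually constrains $p$. Relatedly, Step 1 only gives $N\asymp\sqrt q$, which carries no density information; what is needed (and what the paper proves via Stepanov's method in Proposition~\ref{prop:main}, Corollary~\ref{cor:Sidon}, and Theorem~\ref{thm: Sidon}) is the exact identity $q=\frac{d|A|(|A|-1)}{2}+1$ when $|A|$ is odd or $0\in A$, and, when $|A|$ is even, the pinpoint condition $|A|=2\lceil\sqrt{(q-1)/(2d)}\rceil$ together with $\sqrt{(q-1)/(2d)}\in(\frac12,\frac34)\pmod 1$.

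Step 3 is likewise disconnected from any concrete arithmetic condition: the ``algebraic numbers $\alpha_j$'' and the Kummer/Chebotarev events are never defined in terms of $A$, so there is nothing to which the density $(\frac{d-1}{d})^{r}$ attaches. In the paper the factor $\frac14$ is simply the length of the interval $(\frac12,\frac34)$ in the fractional-part condition above, obtained by equidistribution of $p^{s/2}/\sqrt{2d}$ over $p\equiv1\pmod d$ (Lemma~\ref{lem:ud}); the factor $(\frac{d-1}{d})^{r}$ comes from Proposition~\ref{prop: even}, which forces $\binom{m+\frac{q-1}{d}}{m+1}\not\equiv0\pmod p$, hence via Kummer's theorem bounds each of the $r$ leading base-$p$ digits of $\lceil\sqrt{(q-1)/(2d)}\rceil$ by roughly $\frac{(d-1)(p-1)}{d}$, and each digit condition has density $\frac{d-1}{d}$ by equidistribution. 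The hypothesis that $d$ is not twice a square (for even $s$) is needed only so that $\sqrt{2d}\notin\mathbb{Q}$ and the equidistribution lemma applies to $p^{j}/\sqrt{2d}$ with integer exponents --- not for any quadratic-reciprocity reason. To repair your argument you would need to abandon the subfield descent entirely and establish the Sidon-type rigidity and the binomial non-vanishing condition, which is the real content of the theorem.
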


In particular, when $s=1$ (which corresponds to prime fields), the above lower asymptotic density is at least $\frac{3}{4}$. Also observe that as $s \to \infty$, the above lower asymptotic density tends to $1$. When $s$ is even and $d$ is twice a perfect square, the following theorem supplements Theorem~\ref{thm:density}.

\begin{thm}\label{thm:square}
Let $k \geq 3$ be a positive integer and let $d=2k^2$. If $q$ is an even power of a prime $p \equiv 1 \pmod d$, then $A \hat{+} A \neq S_d$ for any $A \subset \F_q$.
\end{thm}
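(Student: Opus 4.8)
The plan is to argue by contradiction. Write $q=q_0^{2}$ with $q_0=p^{m}$, and suppose $A\hat{+}A=S_d$ for some $A\subseteq\F_q$. Since $p\equiv1\pmod{2k^2}$ we have $p\ge 2k^2+1$, hence $q_0\ge 2k^2+1$; also $2k^2\mid q_0-1$, and writing $q_0-1=2k^2u$ one checks that $(q-1)/d=u(q_0+1)$ is even, so $-1\in S_d$. Because $A\hat{+}A=S_d$, every element of $S_d$ is a sum of two distinct elements of $A$, so $\binom{|A|}{2}\ge|S_d|=(q-1)/(2k^2)$ and hence $|A|>\sqrt{q_0^{2}-1}/k$, which is of order $\sqrt q/k$. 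In the language of the introduction, $A$ is a clique in the Cayley sum graph $\mathrm{Cay}^{+}(\F_q,S_d)$ that moreover covers all of $S_d$, and the plan is to show no such set exists.

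The key algebraic input uses both hypotheses. Since $q$ is an even power of $p$ the subfield $\F_{q_0}$ exists; every element of $\F_{q_0}^{*}$ is a square in $\F_q$, and a short gcd computation (using $2k^2\mid q_0-1$) yields the exact identity
\[
S_d\cap\F_{q_0}^{*}=S_{k^2}(\F_{q_0}),
\]
the group of $k^2$-th powers in $\F_{q_0}$, of index $k^2$ in $\F_{q_0}^{*}$; this is exactly where one needs $\sqrt{d/2}=k$ to be an integer (with $k\ge 3$), the failure of which for general $d$ is the case excluded from Theorem~\ref{thm:density}. I would then prove the restricted-sumset analogue of the van Lint--MacWilliams theorem for $\mathrm{Cay}^{+}(\F_q,S_d)$: any clique large enough to cover $S_d$ must lie inside an affine image $\xi\F_{q_0}+\eta$ of the subfield. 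The tools here are Gauss sum bounds $|\widehat{1_{S_d}}(\chi)|\le\frac{d-1}{d}\sqrt q+O(1)$ for nontrivial additive characters $\chi$, feeding a Hoffman-type ratio bound for cliques, together with a stability analysis of this bound in which the identity above controls how the extremal (subfield) configuration sits inside $S_d$, and the covering hypothesis $\binom{|A|}{2}\approx|S_d|$ forces $A$ to be close to a perfect difference set, pinning it down to a subfield coset.

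Granting this, the contradiction is quick: if $A\subseteq\xi\F_{q_0}+\eta$, then since $\xi\F_{q_0}$ is an additive subgroup of $\F_q$ of order $q_0$,
\[
A\hat{+}A\subseteq(\xi\F_{q_0}+\eta)+(\xi\F_{q_0}+\eta)=\xi\F_{q_0}+2\eta,
\]
so $|A\hat{+}A|\le q_0<(q_0^{2}-1)/(2k^2)=|S_d|$, the strict inequality holding because $q_0\ge 2k^2+1$; this contradicts $A\hat{+}A=S_d$. (The case $0\in A$ reduces to the same analysis after passing to $A\setminus\{0\}\subseteq S_d$.) I expect the genuine difficulty to be confining $A$ to a subfield coset: the soft bounds — the Hoffman ratio bound, or iterated intersections of translates of $S_d$ estimated by Weil — only give $|A|\lesssim\sqrt q$, which exceeds the covering threshold $\approx\sqrt q/k$ by a factor of about $k$, so one cannot conclude from clique size alone. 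The argument must instead exploit the surjectivity $A\hat{+}A=S_d$ (equivalently, that $A$ is nearly a perfect difference set for $S_d$) together with the clique property, and establish the joint rigidity that forces the subfield structure; making this robust, and tracking the error terms carefully, is the crux of the proof.
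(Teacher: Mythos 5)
Your proposal has a genuine gap at its central step, and you essentially say so yourself: everything rests on the claim that a set $A$ with $A\hat{+}A=S_d$ must be contained in an affine image $\xi\F_{q_0}+\eta$ of the subfield, and no argument is given for this. The tools you name cannot deliver it. A Hoffman-type ratio bound (or the double character sum estimate of Lemma~\ref{lem: trivial}) caps cliques in the Cayley sum graph at about $\sqrt q$, and the classification of extremal cliques (the analogue of van Lint--MacWilliams, Theorem~\ref{thm:VLM}) only applies to cliques of size exactly $\sqrt q$; your set $A$ has size only about $\sqrt q/k$, a factor of $k$ below the extremal threshold, where no rigidity or stability statement is known or plausible by spectral methods. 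Saying that the surjectivity $A\hat{+}A=S_d$ makes $A$ ``nearly a perfect difference set'' and that this, jointly with the clique property, ``pins it down to a subfield coset'' is a restatement of what needs to be proved, not a proof. (The final contradiction $|A\hat{+}A|\le q_0<|S_d|$ is fine \emph{granted} the confinement, but that is exactly the point: the confinement is the whole theorem.)

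The paper's route is entirely different and avoids any structural classification of $A$ inside $\F_q$. Via Stepanov's method (Proposition~\ref{prop:main}) and the exclusion of $A+A\subset S_d\cup\{0\}$ (Proposition~\ref{prop: A+AnotS_d}), Theorem~\ref{thm: Sidon} shows that $A$ is forced to be a Sidon set with the \emph{exact} count $q=\tfrac{d}{2}|A|(|A|-1)+1$ when $|A|$ is odd or $0\in A$, and otherwise forces $\sqrt{(q-1)/(2d)}\in(\tfrac12,\tfrac34)\pmod 1$. With $q=p^{2r}$ and $d=2k^2$ these are then killed by elementary number theory: in the first case $4q=(2k|A|-k)^2+(4-k^2)$, so $2p^r+2k|A|-k$ divides $k^2-4$, which is absurd since $p\equiv 1\pmod{2k^2}$ forces $p>k^2-4$; in the second case $2k\mid(p^r-1)$, so $\sqrt{(q-1)/(2d)}=\sqrt{p^{2r}-1}/(2k)$ lies within $1/(2k)\le 1/6$ of an integer, contradicting the fractional-part condition. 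If you want to salvage your write-up, you would need to either supply the missing rigidity theorem for sub-extremal cliques covering $S_d$ (which I do not believe is within reach of the methods you cite) or switch to an argument that, like the paper's, extracts exact arithmetic information from the hypothesis $A\hat{+}A=S_d$ rather than approximate size information.
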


To prove the above results, one key ingredient we develop is the following theorem. Roughly speaking, if $A\hat{+}A=S_d$, then $A$ is ``very close to" being a Sidon set.

\begin{thm}\label{thm: Sidon}
Let $d \geq 2$ and let $p \equiv 1 \pmod d$ be a prime. Let $q$ be a power of $p$ such that $\frac{q-1}{d} \geq 3$. Assume that there is $A \subset \F_q$ such that $A\hat{+}A=S_d$. If $|A|$ is odd or $0 \in A$, then $A$ is a Sidon set with $\{2a: a \in A\} \cap S_d=\emptyset$, and $$q=\frac{d|A|(|A|-1)}{2}+1.$$  
If $|A|$ is even, then 
$$
|A|=2\bigg\lceil \sqrt{\frac{q-1}{2d}} \bigg \rceil, \quad \text{and} \quad \sqrt{\frac{q-1}{2d}} \in \bigg(\frac{1}{2},\frac{3}{4}\bigg) \pmod 1.
$$
\end{thm}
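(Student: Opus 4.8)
The plan is to exploit a double-counting identity relating $|A|$, $|S_d|=\frac{q-1}{d}$, and the number of "collisions" in the restricted sumset representation. Suppose $A\hat{+}A=S_d$. Each element of $S_d$ is of the form $a+b$ with $a\ne b$, $a,b\in A$; counting ordered pairs $(a,b)$ with $a\ne b$ gives $|A|(|A|-1)$ of them, so by inclusion-exclusion (or just counting with multiplicity) we get
\[
|A|(|A|-1)=\sum_{s\in S_d} r(s),
\]
where $r(s)$ is the number of unordered pairs $\{a,b\}\subset A$, $a\ne b$, with $a+b=s$. Since $A\hat{+}A=S_d$ forces $r(s)\ge 1$ for every $s\in S_d$, we get the basic inequality $|A|(|A|-1)\ge 2|S_d|=\frac{2(q-1)}{d}$, i.e. $|A|\gtrsim\sqrt{2(q-1)/d}$. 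The heart of the argument is the reverse inequality together with a parity/structural constraint. First I would note that $A$ being a Sidon set is exactly the statement that $r(s)\le 1$ for all $s$, which combined with the above forces equality $q=\frac{d|A|(|A|-1)}{2}+1$. So the real work is to show that under the hypothesis "$|A|$ odd or $0\in A$" one must have $r(s)\le 1$ everywhere, and to pin down $\{2a\}\cap S_d=\emptyset$; while in the even case, to show $A$ is almost-Sidon with a controlled-size failure, yielding the stated formula for $|A|$ and the fractional-part condition.

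The key idea I would use to force the Sidon property is the following "doubling/multiplicative" trick specific to $S_d$ being a multiplicative subgroup. For $\lambda\in S_d$, the dilate $\lambda A$ has restricted sumset $\lambda(A\hat{+}A)=\lambda S_d=S_d$; so $\lambda A$ is another representing set for $S_d$. More importantly, I expect the argument to run through the \emph{Cayley sum graph} $\mathrm{CayS}(\F_q,S_d)$ whose vertices are $\F_q$ and where $x\sim y$ iff $x+y\in S_d$; the hypothesis $A\hat{+}A=S_d$ says precisely that $A$ is a clique of this graph that, moreover, "saturates" $S_d$ (every edge-color, i.e. every element of $S_d$, is realized). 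One then counts edges: a clique $A$ in this graph that hits every element of $S_d$ and no element outside has $\binom{|A|}{2}\ge |S_d|$, with equality iff the map $\{a,b\}\mapsto a+b$ is injective on pairs, i.e. $A$ is Sidon. To get the parity dichotomy, I would examine the "diagonal": whether $2a\in S_d$ for some $a\in A$. If $2a\in S_d$, then $2a=a'+b'$ for distinct $a',b'\in A$ as well (since $a+a$ is not an allowed restricted sum), giving an extra representation and pushing $\binom{|A|}{2}>|S_d|$ — unless cancellations occur. The parity of $|A|$ enters through counting the involution $a\mapsto$ (its "partner") or through a pairing-up argument on $A$ relative to multiplication by a fixed non-square-ish element; the case $0\in A$ is special because then $0+a=a\in S_d$ for $|A|-1$ elements $a$, forcing $A\setminus\{0\}\subset S_d$ and a clean count.

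For the even case, I would argue that the failure of the Sidon property is bounded: if $r(s)\ge 2$ for $t$ values of $s$ (counted with multiplicity), then $\binom{|A|}{2}=|S_d|+(\text{excess})$, and a parity obstruction (the excess must have a fixed parity determined by $|A|\bmod 2$ and by the number of $a\in A$ with $2a\in S_d$) forces the excess to be exactly the minimal value consistent with $|A|$ being even, which translates into $|A|=2\lceil\sqrt{(q-1)/(2d)}\rceil$. The fractional-part window $\bigl(\tfrac12,\tfrac34\bigr)$ should fall out of solving $\binom{|A|}{2}\ge |S_d|$ and $\binom{|A|-2}{2}<|S_d|$ simultaneously with $|A|$ even, i.e. sandwiching $\sqrt{(q-1)/(2d)}$ between consecutive half-integers of the appropriate parity; this is a routine but careful inequality manipulation. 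I expect the main obstacle to be the parity bookkeeping: cleanly tracking how the number of "diagonal hits" $|\{a\in A: 2a\in S_d\}|$ interacts with $|A|\bmod 2$ and with the excess count, and in particular ruling out the possibility of a single "heavy" element $s$ with large $r(s)$ in the odd case. I would handle that by a local analysis at each $s\in S_d$: the fiber $\{\{a,b\}: a+b=s\}$ is, after translating, a symmetric subset of $A$ under $x\mapsto s-x$, so its structure is governed by how $A$ meets its own reflection, and a counting argument over all $s\in S_d$ (using that these reflections are "generic" because $S_d$ is large, $\frac{q-1}{d}\ge 3$) should close the gap.
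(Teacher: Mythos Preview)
Your proposal correctly identifies the trivial direction $\binom{|A|}{2}\ge |S_d|$ coming from $A\hat{+}A=S_d$, and correctly observes that the Sidon conclusion is equivalent to equality holding here. But the entire difficulty of the theorem lies in the \emph{reverse} inequality --- an upper bound on $|A|$ of the shape $\tfrac{|A|(|A|-1)}{2}\le \tfrac{q-1}{d}$ (plus the correction term $\#\{a\in A:2a\in S_d\}$) --- and your outline has no mechanism to produce it. The dilation observation $\lambda A\hat{+}\lambda A=S_d$ is true but yields no new inequality. The local symmetry $x\mapsto s-x$ on the fiber over $s$ only tells you a parity fact about $r(s)$, not a size bound. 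And the claim $\binom{|A|-2}{2}<|S_d|$ in your even-case sketch \emph{is} the missing upper bound in disguise; you have not said where it comes from. Nothing in the proposal prevents $A$ from being much larger than $\sqrt{2(q-1)/d}$ with many multiply-represented sums, so the ``parity bookkeeping'' you describe never gets off the ground.

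The paper's argument is fundamentally algebraic rather than combinatorial. The upper bound on $|A|$ is obtained by a Stepanov-type construction: one builds an auxiliary polynomial $f\in\F_q[x]$ of degree at most $\tfrac{q-1}{d}$ that vanishes on every point of $A$ to order roughly $|A|/2$, using hyper-derivatives and the key relation $(a_i+a_j)^{(q-1)/d}=1$ for $i\ne j$. One then verifies $f\not\equiv 0$ precisely because some $2a_j\notin S_d\cup\{0\}$. Comparing total multiplicity with $\deg f$ gives, for $N=|A|$ odd,
\[
\frac{N(N-1)}{2}+\#\{a\in A:2a\in S_d\cup\{0\}\}\le\frac{q-1}{d},
\]
and a slightly weaker bound $\tfrac{(N-1)^2+1}{2}\le\tfrac{q-1}{d}$ when $N$ is even; combined with the trivial $\binom{N}{2}\ge|S_d|$ this forces equality, the Sidon property, $\{2a:a\in A\}\cap S_d=\emptyset$, and in the even case the fractional-part window. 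A separate, also nontrivial, step (using the Hanson--Petridis sumset bound) rules out the degenerate possibility $A+A\subset S_d\cup\{0\}$, which is the hypothesis needed to guarantee the auxiliary polynomial is nonzero. None of these algebraic ingredients is present in your outline.
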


Recall $A=\{a_1, a_2, \ldots, a_N\} \subset \F_q$ is a \emph{Sidon set} if all pairwise sums $a_i+a_j$ (for $i \leq j$) are different. Theorem~\ref{thm: Sidon} is partially inspired by an interesting connection between S\'{a}rk\"{o}zy's conjecture and co-Sidon sets described in the recent work of Hanson and Petridis \cite{HP} (see also a related work by Lev and Sonn \cite{LS17}). It is straightforward to use  Theorem~\ref{thm: Sidon} to deduce Theorem~\ref{thm:square}. Theorem~\ref{thm: Sidon} also implies the following corollary.

\begin{cor}\label{cor:Siegel}
Let $d \geq 2$ and $k \geq 3$. If $k$ is even, additionally assume that $d \neq 8$. Then there exists $p_0=p_0(d,k)$, such that if $p>p_0$ is a prime such that $p \equiv 1 \pmod d$ and $A \subset \F_{p^k}$ such that $A\hat{+}A=S_d(\F_{p^k})$, then $|A|$ is even and $0 \not \in A$.
\end{cor}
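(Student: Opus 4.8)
The plan is to argue by contradiction using Theorem~\ref{thm: Sidon}. Suppose that for infinitely many primes $p \equiv 1 \pmod d$ there is a set $A \subset \F_{p^k}$ with $A \hat{+} A = S_d(\F_{p^k})$ and with $|A|$ odd or $0 \in A$. By Theorem~\ref{thm: Sidon}, in that case $A$ is a Sidon set and $q = p^k = \frac{d|A|(|A|-1)}{2}+1$. Writing $N = |A|$, this says $2(p^k - 1) = dN(N-1)$, i.e.\ $p^k$ is forced to lie in a very thin set of integers: it must be of the shape $1 + dN(N-1)/2$ for some integer $N$. The heart of the matter is that this is a polynomial (degree-$2$ in $N$) Diophantine constraint linking $p^k$ to $N$, and for $k \geq 3$ such a constraint has only finitely many solutions.

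The key steps, in order. First I would restate the equation as $dN^2 - dN + (2 - 2p^k) = 0$ and solve for $N$: the discriminant $d^2 + 8d(p^k-1) = d\big(8p^k - (8-d)\big)$ must be a perfect square, say $m^2$, so $m^2 = 8d\, p^k - d(8-d)$. Thus the pair $(p, m)$ satisfies the superelliptic-type equation $m^2 + d(8-d) = 8d\,p^k$. Second, I would invoke a finiteness theorem for such equations: for fixed $d$ and fixed exponent $k \geq 3$, the equation $m^2 = 8d\, X^k - d(8-d)$ (with $X$ ranging over prime powers, or even all integers) has only finitely many solutions. When $k \geq 3$ and $k$ is odd, or when $k \equiv 2 \pmod 4$ (so $k/2$ is odd), one can reduce to a genus-$\geq 1$ curve and apply Siegel's theorem on integral points; the residual cases $k \equiv 0 \pmod 4$ reduce to $k = 4$-type equations $Y^2 = 8d Z^4 - d(8-d)$ which again define curves of genus $\geq 1$ once $d(8-d) \neq 0$, hence have finitely many integral points — except that when the quartic $8dZ^4 - d(8-d)$ is (up to square factors) a perfect square as a polynomial the genus can drop, and $8dZ^4$ being a square forces $2d$ to be a perfect square, with the constant term $-d(8-d)$ then also needing to vanish, i.e.\ $d = 8$; this is exactly the excluded case $d = 8$ when $k$ is even. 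Collecting the finitely many exceptional primes into $p_0(d,k)$ gives the statement: for $p > p_0$ no such $A$ with $|A|$ odd or $0 \in A$ exists, so by Theorem~\ref{thm: Sidon} any $A$ with $A\hat{+}A = S_d(\F_{p^k})$ must have $|A|$ even and $0 \notin A$.

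The main obstacle I anticipate is handling the degenerate cases cleanly — precisely pinning down when the relevant curve has genus $0$ and extracting that the only obstruction is $d = 8$ with $k$ even (matching the hypothesis), rather than a messier list. Concretely, the $k$ odd case is immediate from Siegel applied to $y^2 = f(x)$ with $f$ of odd degree $\geq 3$ (hence squarefree-free of the genus-$0$ pathology), and the $k \equiv 2 \pmod 4$ case reduces to odd-degree too after substituting $x \mapsto x^{k/2}$; the only real work is the $4 \mid k$ case, where after writing $k = 4k'$ one studies $y^2 = 8d\, z^4 - d(8-d)$ in $z = x^{k'}$. For this quartic model, genus $1$ holds unless $8dz^4 - d(8-d)$ has a repeated root, which happens only if it is a constant times a perfect-square polynomial; since the leading coefficient $8d$ is never zero, a repeated root of the full quartic forces all roots to coincide, impossible for $z^4 - c$ with $c \neq 0$, so in fact the genus-drop cannot occur from $z^4 - c$ and one must instead worry that Siegel's theorem's hypotheses (at least three distinct roots, or genus $\geq 1$) are met — which they are as soon as $d(8-d) \neq 0$, i.e.\ $d \neq 8$ (here $d \geq 2$, so $d = 0$ is irrelevant). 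This confirms that $d = 8$ with $k$ even is the sole genuinely excluded configuration, matching the statement of the corollary.
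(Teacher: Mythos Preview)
Your approach is the same as the paper's in spirit: apply Theorem~\ref{thm: Sidon} to obtain the Diophantine constraint $p^k=\frac{dN(N-1)}{2}+1$, then invoke Siegel's theorem on integral points for the resulting hyperelliptic curve. There is, however, one genuine gap and one unnecessary detour.

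\textbf{The gap.} When $k$ is odd the hypothesis of the corollary does \emph{not} exclude $d=8$, so this case must be treated. Your assertion that for odd $k$ the polynomial $f(X)=8dX^k-d(8-d)$ is automatically squarefree (``hence squarefree-free of the genus-$0$ pathology'') is false precisely here: for $d=8$ one gets $f(X)=64X^k$, which has a root of multiplicity $k$ at $0$, the curve $m^2=64X^k$ is rational, and Siegel's theorem does not apply (indeed it has infinitely many integral points). The paper disposes of this case elementarily: when $d=8$ the equation becomes $p^k=4N(N-1)+1=(2N-1)^2$, and a prime $p$ with odd $k$ cannot satisfy this. You need to insert this short argument.

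\textbf{The detour.} Your three-way split according to $k\pmod 4$, with substitutions $Z=X^{2}$ or $Z=X^{k/4}$, is not needed. The paper works directly with the single curve
\[
C:\quad y^2=\frac{8}{d}(x^k-1)+1,
\]
on which $(p,2N-1)$ is an integral point. For $d\neq 8$ the right-hand side is squarefree (its derivative vanishes only at $x=0$, where the value is $1-8/d\neq 0$), so $C$ has genus $\lceil k/2\rceil-1\geq 1$ for every $k\geq 3$, and a single appeal to Siegel's theorem suffices. This avoids the awkward ``$x\mapsto x^{k/2}$'' reduction (which, as written, goes the wrong way) and unifies all parities of $k$.
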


Next we discuss upper bounds on $|A|$ assuming $A \hat{+} A \subset S_d \cup \{0\}$. Using a standard double character sum estimate, one can show that $|A|< \sqrt{q}+3$ (see Lemma~\ref{lem: trivial}). When $q=p$ is a prime, we show that this square root upper bound can be improved to roughly $\sqrt{2q/d}$, which provides a multiplicative factor $\sqrt{2/d}$ improvement.

\begin{thm}\label{thm:ub}
Let $d \geq 2$ and let $p \equiv 1 \pmod d$ be a prime. Assume that $A \subset \F_p$.  If $A \hat{+} A \subset S_d$, then $|A|\leq \sqrt{2(p-1)/d+1}+1$; if $A \hat{+} A \subset S_d \cup \{0\}$, then $|A|\leq \sqrt{2(p-1)/d+1}+2$. 
\end{thm}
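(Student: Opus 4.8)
My plan is a polynomial (Stepanov-type) argument, in the spirit of Hanson--Petridis's treatment of the clique number of Paley graphs. Write $m:=(p-1)/d=|S_d|$ and $N:=|A|$, and recall that in $\F_p^*$ the subgroup $S_d$ is precisely $\{x:x^m=1\}$, the group of $m$-th roots of unity, so that for $x\in\F_p^*$ one has $x\in S_d\iff x^m=1$. A dilation by an element of $S_d$ preserves the hypothesis, so we may normalize $A$. First I would dispose of the degenerate possibilities: if $0\in A$ then $A\setminus\{0\}\subseteq S_d$; and if $A\hat{+}A\subseteq S_d\cup\{0\}$ with $A\cap(-A)\ne\emptyset$ then $A$ is nearly symmetric, and since $S_d\cap(-S_d)$ is empty when $(p-1)/d$ is odd and equals $S_d$ when it is even, a short separate analysis reduces us, at the cost of a small additive constant, to the clean case $0\notin A$ and $A\hat{+}A\subseteq S_d$. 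In that case put
$$f(x)=\prod_{a\in A}(x+a)\in\F_p[x],\qquad \deg f=N.$$
For each $b\in A$ we have $f(b)=2b\prod_{a\in A\setminus\{b\}}(a+b)$, and every factor $a+b$ with $a\ne b$ lies in the subgroup $S_d$; hence $f(b)\in 2b\,S_d$, equivalently $f(b)^{m}=(2b)^{m}$ for all $b\in A$.

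Second, I would convert the relation $f(b)^m=(2b)^m$ into a degree bound. Let $\tilde f(x):=\prod_{a\in A}(x-a)=(-1)^Nf(-x)$; then $\tilde f(x)\mid f(x)^m-(2x)^m$, but this crude divisibility (degrees $N\mid Nm$) is vacuous. The Stepanov step is to manufacture from $f$, from its derivative $f'$ (via the logarithmic-derivative identity $f'(b)/f(b)=\sum_{a\in A}(a+b)^{-1}$), and from the monomial $x$ carrying the factors $2b$, a \emph{nonzero} auxiliary polynomial whose degree is $O(m)$ but which vanishes at each point of $A$ to an order that grows linearly in $N$ --- exactly as, for the Paley graph, one combines $\prod_{a\in A}(x-a)$ with $x^{(p-1)/2}-1$ and its derivative. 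Pushing the degree bookkeeping through, and using the symmetry $\tilde f(x)=(-1)^Nf(-x)$ together with the cosets of $S_d$ occupied by $2b$ and by $f'(b)$, should yield an inequality of the shape $N(N-2)\le 2m=2(p-1)/d$, i.e.\ $N\le 1+\sqrt{2(p-1)/d+1}$. The cases $0\in A$ and $A\hat{+}A\subseteq S_d\cup\{0\}$ are then handled by minor variants of the same argument, which account for the small additive discrepancy between the two stated bounds (reinstating the removed element gives the $+2$).

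The heart of the matter --- and the step I expect to be the main obstacle --- is the construction in the middle paragraph: one must choose the auxiliary polynomial and the prescribed vanishing multiplicity so precisely that the resulting saving over the trivial double-character-sum bound $|A|<\sqrt p+3$ is by the full factor $2/d$, rather than a weaker constant. In particular the appearance of $2m$ rather than $m$ ought to trace back to the extra monomial $x$ in the relation $f(b)\in 2b\,S_d$ (as against $f'(a)\in S_d$ in the Paley case), and keeping careful track of the quadratic characters of $2$ and $-1$ --- equivalently of the parity of $(p-1)/d$ --- so that this does not bleed a constant factor into the main term is the delicate part of the argument.
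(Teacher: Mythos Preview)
Your proposal identifies the correct general framework (Stepanov's method in the Hanson--Petridis style) and a correct initial observation, namely that $f(b)^{(p-1)/d}=(2b)^{(p-1)/d}$ for $f(x)=\prod_{a\in A}(x+a)$. But the heart of the argument --- the construction of the auxiliary polynomial with degree $O((p-1)/d)$ and vanishing to order roughly $N/2$ at each point of $A$ --- is not actually carried out; you flag it yourself as ``the main obstacle,'' and indeed nothing in your sketch explains how the product polynomial $f$, its derivative, and the monomial $x$ can be combined to produce such an object. Your analogy with the Paley case is suggestive but not a proof: in that setting one exploits $A-A\subset S_2$, which gives vanishing of $\prod_{a\in A}(x-a)$ and its derivatives against $x^{(p-1)/2}-1$; here the restricted sumset condition $a+b\in S_d$ for $a\ne b$ does not translate the same way, because the diagonal term $2b$ is uncontrolled.

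The paper's construction is rather different from your $f(x)=\prod_{a\in A}(x+a)$. Writing $n\approx N$ and $m=(n-1)/2$, one solves a Vandermonde system for coefficients $c_1,\dots,c_n$ and sets
\[
F(x)=-(-1)^m+\sum_{i=1}^n c_i(x+a_i)^{m+(p-1)/d}(x-a_i)^m.
\]
The Vandermonde conditions force all coefficients of $x^j$ with $j>(p-1)/d$ to cancel, so $\deg F\le (p-1)/d$; the factor $(x-a_i)^m$ together with $(a_j+a_i)^{(p-1)/d}=1$ for $i\ne j$ makes each $a_j$ a root of multiplicity $\ge m$ (verified via Hasse derivatives). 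Crucially, the argument splits into two cases: if $A+A\subset S_d\cup\{0\}$ one applies the Hanson--Petridis bound directly; if not, there exists $a_{j_0}$ with $2a_{j_0}\notin S_d\cup\{0\}$, and precisely this element is used to certify $F\not\equiv 0$ via $E^{(m)}F(a_{j_0})\ne 0$. Your proposal's ``clean case'' reduction misses this dichotomy, and your hand-wavy treatment of the symmetric/degenerate cases does not substitute for it. The inequality $Nm\le(p-1)/d$ then yields $N(N-2)/2\le (p-1)/d$, i.e.\ $N\le 1+\sqrt{2(p-1)/d+1}$; the $+2$ bound under the weaker hypothesis $A\hat{+}A\subset S_d\cup\{0\}$ is obtained by deleting one element $-a^*$ (not by a parity argument on $(p-1)/d$ as you suggest).
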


When $q$ is square and $d \geq 3$, we show that this upper bound from character sum estimates can be improved to $\sqrt{q}$ below. Moreover, this is in general best possible since the $\sqrt{q}$ bound can be achieved by an infinite family.

\begin{thm}\label{thm:VLM}
Let $d \geq 3$. Let $q \equiv 1 \pmod d$ be an odd prime power and a square. If $A \subset \F_q$ such that $A\hat{+}A \subset S_d \cup \{0\}$, then $|A| \leq \sqrt{q}$, with the equality holds if and only if $d \mid (\sqrt{q}+1)$ and $A=\alpha \F_{\sqrt{q}}$, where $\alpha \in S_d$.
\end{thm}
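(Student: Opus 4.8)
\emph{Proof strategy.} Write $Q=\sqrt q$ and $m=(q-1)/d=(Q-1)(Q+1)/d$; note that $q=Q^2$ with $Q=p^{s}$ an odd prime power, and $Q\ge 3$. The ``if'' direction is a direct computation: if $A=\alpha\F_Q$ with $\alpha\in S_d$, then since $Q\ge 3$ every element of $\F_Q$ is a sum of two distinct elements of $\F_Q$, so $A\hat{+}A=\alpha\F_Q$; moreover $\alpha\F_Q\setminus\{0\}=\alpha\F_Q^{*}\subseteq S_d$ exactly when $\F_Q^{*}\subseteq S_d$, i.e.\ $(Q-1)\mid m$, i.e.\ $d\mid(Q+1)$, and $|A|=Q$. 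For the bound and the ``only if'' direction I would use the polynomial method, following the strategy of Blokhuis's proof of the van Lint--MacWilliams conjecture and its extensions to generalized Paley graphs. One preliminary point: unlike for difference sets, the hypothesis $A\hat{+}A\subseteq S_d\cup\{0\}$ is invariant under multiplication by elements of $S_d$ but \emph{not} under translation, so one cannot normalize $0\in A$, and I expect to split the analysis according to the parity of $|A|$ and whether $0\in A$, mirroring Theorem~\ref{thm: Sidon}.

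Assume $A\hat{+}A\subseteq S_d\cup\{0\}$; by Lemma~\ref{lem: trivial} one has $|A|\le Q+2$, so it suffices to treat $|A|\in\{Q,Q+1,Q+2\}$. Set $f(X)=\prod_{a\in A}(X-a)$ and $g(X)=\prod_{a\in A}(X+a)$, both of degree $N:=|A|$. The key local input is that if $a\in A$ and $-a\notin A$, then $\prod_{b\in A\setminus\{a\}}(a+b)$ is a product of elements of $S_d$ (each $a+b\in S_d\cup\{0\}$ is nonzero since $-a\notin A$), hence lies in $S_d$; since $g(a)=2a\prod_{b\in A\setminus\{a\}}(a+b)$, this gives $g(a)/(2a)\in S_d$, that is, $g(a)^m=(2a)^m$. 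Thus $g(X)^m-2^mX^m$ vanishes at every $a\in A$ with $-a\notin A$, while the ``symmetric'' elements (those $a\in A$ with $-a\in A$, including $a=0$ when $0\in A$) satisfy $g(a)=0$ and must be accounted for by companion polynomials recording that $\prod_{b\in A\setminus\{a,-a\}}(a+b)\in S_d$. Note that here $f'(a)=\prod_{b\in A\setminus\{a\}}(a-b)$ carries no information, since we control sums rather than differences --- a first point of departure from the generalized Paley graph clique problem.

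The core of the argument is a Blokhuis-type degree estimate: reduce $g(X)^m-2^mX^m$ and its companions modulo $X^q-X$ and exploit that $q=Q^2$ is a square, so that the degree $N\approx Q$ of $g$ and the exponent $m=(Q-1)(Q+1)/d$ interact rigidly with the Frobenius $x\mapsto x^Q$. The goal is to force $f$ --- equivalently $g$ --- to be, up to an additive constant and the scaling the sum structure allows, an $\F_p$-linearized (additive) polynomial, so that $A$ is a coset of an $\F_p$-subspace of $\F_q$. Such a coset has cardinality a power of $p$, and the only power of $p$ in $[Q,Q+2]$ is $Q=p^{s}$ itself (since $p^{s+1}=pQ\ge 3Q>Q+2$); hence $|A|=Q$, which rules out $|A|\in\{Q+1,Q+2\}$ and establishes $|A|\le Q$. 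In the equality case $|A|=Q$ one has $A=V+\mu$ for an $\F_p$-subspace $V$ with $|V|=Q$, so $A\hat{+}A=V+2\mu$ (using $V\hat{+}V=V$, valid since $|V|\ge 3$); a final step then identifies the coset: from $V+2\mu\subseteq S_d\cup\{0\}$, together with the multiplicative closure of $S_d$ and the precise polynomial relation obtained above, one deduces $\mu\in V$ and $V^{*}\subseteq S_d$, whence $V=\alpha\F_Q$ for some $\alpha\in S_d$; in particular $\F_Q^{*}\subseteq S_d$, so $d\mid(Q+1)$ and $A=\alpha\F_Q$.

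I expect the main obstacle to be carrying out the degree estimate in the restricted-sumset setting rather than for cliques in generalized Paley graphs: the multiplier $2a$ in $g(a)=2a\prod_{b\ne a}(a+b)$, the vanishing of $g$ at the symmetric elements, and the loss of translation symmetry all disrupt the identities that make the difference-set argument clean, so the companion polynomials and the parity cases ($|A|$ odd or $0\in A$ versus $|A|$ even) must be organized with care. A secondary technical step is ruling out off-origin cosets $V+\mu$ with $\mu\notin V$ of the maximal size $Q$ contained in $S_d\cup\{0\}$, which I would handle through the tension between the multiplicative closure of $S_d$ and the non-closure of additive cosets of $\F_Q$. Finally, I note that purely spectral methods --- a ratio-type bound, or the expander mixing lemma applied to the associated Cayley sum graph on $\F_q$ with connection set $S_d\cup\{0\}$ --- only yield $|A|\le Q+O(1)$ and no structural information, since that graph has loops precisely at the vertices $x$ with $2x\in S_d$ and hence fails to be regular; so the polynomial method appears essential for the sharp bound and the characterization.
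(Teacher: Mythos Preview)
Your plan is genuinely different from the paper's proof and, as written, has a real gap: the ``Blokhuis-type degree estimate'' that would force $g$ to be affine-linearized is never carried out, and it is not clear it can be. In the difference-set setting one has $f'(a)^m=1$ at every $a\in A$, and this combines with the reduction of $f$ modulo $X^q-X$ to pin $f$ down; here you have only $g(a)^m=(2a)^m$ at the \emph{non-symmetric} points of $A$, a single relation contaminated by the factor $2a$ and missing at the symmetric elements. You correctly flag this as the main obstacle, but you do not resolve it, and I do not see how the degree count closes --- with $|A|=Q$ and $m=(Q^2-1)/d$, the polynomial $g(X)^m-2^mX^m$ has degree of order $q$, far too large relative to the at most $Q$ zeros you control.

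The paper avoids all of this by a dichotomy you overlook. Split on whether the \emph{full} sumset $A+A$ lies in $S_d\cup\{0\}$. If $A+A\not\subset S_d\cup\{0\}$, the paper's Stepanov-type bound (Proposition~\ref{prop: allow0}) gives $|A|\le\sqrt{2(q-1)/d+1}+2$, which for $d\ge 3$ and $q>121$ is strictly below $\sqrt q$; this is exactly where the hypothesis $d\ge 3$ is used, and small $q$ is checked by computer. If $A+A\subset S_d\cup\{0\}$, then the \emph{stronger} clause of Lemma~\ref{lem: trivial} (not the restricted-sumset clause you invoke) already gives $|A|\le\sqrt q$, with equality only if $A=-A$. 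But $A=-A$ converts sums to differences: $A-A=A+A\subset S_d\cup\{0\}$, so $A$ is a maximum clique in the generalized Paley graph $GP(q,d)$, and the known classification (Blokhuis \cite{Blo84}, Sziklai \cite{Szi99}, and \cite{Y22}) yields $d\mid(\sqrt q+1)$ and $A=\alpha\F_{\sqrt q}$. Thus the polynomial method enters only through the already-proved van Lint--MacWilliams theorem and its extensions; no new Blokhuis-style argument for restricted sumsets is needed. The key simplification you are missing is that the character-sum equality case forces $A=-A$, collapsing the sum problem to the solved difference problem.
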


A well-known conjecture due to van Lint and MacWilliams \cite{vLM78} states that if $q$ is an odd prime power and a square, and $A$ is a subset of $\F_{q}$ such that $0,1 \in A$, $|A|=\sqrt{q}$, and $A-A \subset S_2 \cup \{0\}$, then $A$ must be given by the subfield $\F_{\sqrt{q}}$. The conjecture was first proved by Blokhuis \cite{Blo84}, and its various generalizations were confirmed in \cite{AY22, Szi99, Y22}. Thus, Theorem~\ref{thm:VLM} establishes an analogue of van Lint-MacWilliams' conjecture for restricted sumsets. Theorem~\ref{thm:ub} and Theorem~\ref{thm:VLM} can be also formulated in terms of clique number and maximum cliques in the corresponding Cayley sum graphs; we refer to Section~\ref{subsec:Cayleysum} for more discussions. We also refer to Remark~\ref{rem:EKR} on a connection between Theorem~\ref{thm:VLM} and the celebrated Erd{\H{o}}s-{K}o-{R}ado theorem \cite{EKR}.

Finally, we discuss the implications of our results to a related problem over integers. Erd\H{o}s \cite{E63} and Moser \cite{M64} independently asked whether for all $k$ there are integers $a_1<\ldots<a_k$ such that $a_i+a_j$ is a perfect square for all $1 \leq i <j \leq k$. It is easy to verify that the Uniformity Conjecture \cite{CHM} implies that the answer to their question is negative (see the related discussion in \cite[Section 5]{SS21}). Lagrange \cite{L81} and Nicolas \cite{N77} found a set of $6$ integers such that the sum of any two of them is a perfect square (see also a recent paper of Choudhry \cite{C15}), and it is unknown whether there is a set of $7$ integers satisfying such property. 

In the other direction, Rivat, S\'{a}rk\"{o}zy, and Stewart \cite[Theorem 6]{RSS99} proved that if $A \subset\{1, \ldots, N\}$ and $A \hat{+} A$ is contained in the set of perfect squares, then $|A|\leq (36+o(1))\log N$. Gyarmati \cite[Theorem 9]{G01} considered a generalization of the problem by Erd\H{o}s and Moser, and proved that if $d \geq 2$ is fixed, and $A,B \subset \{1,2,\ldots, N\}$ such that $A+B$ is contained in the set of perfect $d$-th powers, then $\min(|A|, |B|)\leq (4d+o(1)) \log N$. In particular, her result implies that the upper bound in the result of Rivat, S\'{a}rk\"{o}zy, and Stewart can be improved from $(36+o(1))\log N$ to $(16+o(1))\log N$. We further improve their upper bound to $(1+o(1))\log N$, as a special case of the following more general result:
\begin{thm}\label{thm:integers}
Let $d \geq 2$. If $A\subset\{1,2,\ldots, N\}$ such that $A \hat{+} A$ is contained in the set of perfect $d$-th powers, then as $N \to \infty$, we have 
$$
|A| \leq \frac{(2+o(1))\phi(d)}{d}\log N.
$$
\end{thm}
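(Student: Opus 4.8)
The plan is to prove Theorem~\ref{thm:integers} by adapting the strategy of Rivat--S\'ark\"ozy--Stewart \cite{RSS99} and Gyarmati \cite{G01}: reduce the problem to counting representations of an integer as a difference of two $d$-th powers, and recover the improved constant by exploiting that the sumset is \emph{restricted} rather than a genuine sumset. First I would fix a base point. Write $A=\{a_1<a_2<\dots<a_k\}$ and set $M=\lfloor(2N)^{1/d}\rfloor$, so every element of $A\hat{+}A$ is $n^d$ for some $1\le n\le M$. For a fixed index $s$, each $a\in A\setminus\{a_s\}$ satisfies $a+a_s=m^d$ with $1\le m\le M$, i.e.\ $A\setminus\{a_s\}\subseteq\{n^d-a_s:1\le n\le M\}$. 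Intersecting the shifts coming from two distinct indices $s,t$: every $a\in A\setminus\{a_s,a_t\}$ yields a pair of positive integers $(x,y)$ with $x^d=a+a_s$ and $y^d=a+a_t$, so that $y^d-x^d=a_t-a_s$, and distinct $a$ produce distinct $x$. Writing $r_d(c)$ for the number of ways of expressing $c$ as a difference of two positive $d$-th powers, this gives
$$
|A|-2\le r_d\big(|a_s-a_t|\big)\qquad\text{for every pair }s\ne t,
$$
hence $|A|\le 2+\min_{s\ne t}r_d(|a_s-a_t|)$. The restricted hypothesis is used precisely here: in Gyarmati's $A+B$ framework one controls only $\min(|A|,|B|)$, and applying it to $A\hat{+}A$ forces one to split $A$ into two halves and lose a factor $2$; working with $A\hat{+}A$ directly avoids the split, which already accounts for dropping from Gyarmati's $8d\log N$ to something of shape $\tfrac{2}{d}\cdot(\cdots)\log N$.

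Next I would attack the number-theoretic core, namely bounding $\min_{s\ne t}r_d(|a_s-a_t|)$. Elementarily $r_d(c)\le\tau(c)$: fixing the divisor $e=y-x$ of $c$, the remaining relation $c/e=\sum_{i=0}^{d-1}x^{i}y^{d-1-i}$ is strictly monotone in $x$, so it has at most one solution. This bound alone is far too lossy, and the refinement is twofold. First, one chooses the difference $c$ advantageously: since the consecutive gaps of $A$ sum to $a_k-a_1<N$, some gap satisfies $a_{i+1}-a_i<N/(|A|-1)$; moreover the roots satisfy $x,y\le M$, which confines the admissible $e=y-x$ to a short window rather than to all divisors of $c$. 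Second, one expects the factor $\phi(d)/d$ to emerge from a congruence analysis of $c/e=\sum_{i=0}^{d-1}x^iy^{d-1-i}$ modulo the prime-power divisors of $d$, which restricts the usable $e$ to residue classes coprime to $d$. Optimizing the resulting estimate (balancing the size of the chosen gap against the count of admissible $e$) should give $|A|\le \tfrac{(2+o(1))\phi(d)}{d}\log N$, and for $d=2$ this is $(1+o(1))\log N$, as announced.

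The main obstacle will be this last step: upgrading the crude $r_d(c)\le\tau(c)$ to a genuinely logarithmic bound with the sharp constant. It is delicate because $\tau(c)$ can be as large as $c^{o(1)}$, so the saving must come from the \emph{shape} of the equation $y^d-x^d=c$ (the shortness of the window for $y-x$, together with the mod-$d$ restriction) and from the freedom to select which pairwise difference of $A$ to use, not from the size of $c$ alone. I should also record why the natural finite-field route is not competitive here: reducing $A$ modulo a prime $p\equiv1\pmod d$ and invoking Theorem~\ref{thm:ub} only produces a bound of order $\sqrt{p}\ge N^{1/(2d)}$ (the threshold for ensuring an injective reduction scales like $|A|^2$, cancelling the gain), so the logarithmic improvement over the trivial square-root bound is intrinsically arithmetic and has to be obtained by the Diophantine argument above.
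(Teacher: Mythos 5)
There is a genuine gap, and it sits exactly where you flag it yourself. Your reduction to $|A|-2\le r_d(|a_s-a_t|)$ is fine (it is essentially the Rivat--S\'ark\"ozy--Stewart/Gyarmati starting point), but everything after that is speculation: you never prove that some pairwise difference $c$ of $A$ satisfies $r_d(c)\le(1+o(1))\tfrac{2\phi(d)}{d}\log N$, you only say that you ``expect'' the factor $\phi(d)/d$ to emerge from a congruence analysis and that the estimate ``should'' optimize to the stated bound. This is precisely the step at which the earlier Diophantine arguments stall at constants like $36$ and $16$; nothing in your sketch explains how the shortness of the window for $y-x$ plus a mod-$d$ restriction would beat $\tau(c)$ down to a logarithm with the sharp constant, and no such refinement is known to me. As written, the proposal does not constitute a proof.

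Moreover, your closing paragraph dismisses the route the paper actually takes, based on a misconception. The paper does \emph{not} reduce $A$ modulo a single prime large enough to make the reduction injective; it reduces modulo \emph{all} primes $p\equiv 1\pmod d$ with $p\le Q=\tfrac{2}{d}(\phi(d)\log N)^2$ and feeds the bounds $|A_p|\le\sqrt{2(p-1)/d+1}+2$ from Theorem~\ref{thm:ub} into Gallagher's larger sieve (Lemma~\ref{GS}), which needs no injectivity at any single prime. The constant comes out cleanly: the density $1/\phi(d)$ of the primes used and the factor $\sqrt{2/d}$ in Theorem~\ref{thm:ub} combine, after optimizing $Q$, to give exactly $\tfrac{(2+o(1))\phi(d)}{d}\log N$. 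So the ``intrinsically arithmetic'' improvement you are hunting for in the representation function $r_d$ is in fact obtained by the finite-field method you ruled out. I would encourage you to redo the computation with Gallagher's sieve rather than with a single injective reduction; the $O(\sqrt p)$ bounds at many small primes $p\asymp(\log N)^2$ are what produce the logarithmic bound.
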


\subsection*{Outline of the paper}
In Section~\ref{sec2}, we provide extra background on the topic. In Section~\ref{sec3}, we estimate $|A|$ assuming $A\hat{+}A \subset S_d$. In particular, we prove Theorem~\ref{thm:ub} and Theorem~\ref{thm:VLM}. The goal of Section~\ref{sec4} is to investigate the possibility of decomposing a multiplicative subgroup as a restricted sumset. In particular, we prove Theorem~\ref{thm:main}, Theorem~\ref{thm:density}, and Theorem~\ref{thm:square}. To achieve that, we need to first establish Theorem~\ref{thm: Sidon} on the connection between this problem with Sidon sets. Finally, in Section~\ref{sec5}, we discuss the applications of our main results to the question of Erd\H{o}s and Moser and its variant, and we prove Theorem~\ref{thm:integers}.

\section{Background}\label{sec2}

\subsection{Hyper-derivatives}

Our proof is based on Stepanov's method, inspired by \cite{HP, S69, Y24}.

We recall a few basic properties of hyper-derivatives (also known as Hasse derivatives); we refer to a general discussion in \cite[Section 6.4]{LN97}. 

\begin{defn}
Let $c_0,c_1, \ldots c_d \in \F_q$. If $n$ is a non-negative integer, then the $n$-th hyper-derivative of $f(x)=\sum_{j=0}^d c_j x^j$ is
$$
E^{(n)}(f) =\sum_{j=0}^d \binom{j}{n} c_j x^{j-n},
$$
where we follow the standard convention that $\binom{j}{n}=0$ for $j<n$, so that the $n$-th hyper-derivative is a polynomial.
\end{defn}

\begin{lem}[{\cite[Corollary 6.48]{LN97}}]\label{lem:differentiate}
Let $n,d$ be positive integers. If $c \in \F_q$, then 
$E^{(n)}\big((x+c)^d\big)=\binom{d}{n} (x+c)^{d-n}.$
\end{lem}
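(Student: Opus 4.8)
The plan is to verify the identity directly from the definition of the $n$-th hyper-derivative together with the binomial theorem. First I would expand
$$(x+c)^d=\sum_{j=0}^{d}\binom{d}{j}c^{d-j}x^{j},$$
so that, applying the definition term by term,
$$E^{(n)}\big((x+c)^d\big)=\sum_{j=0}^{d}\binom{j}{n}\binom{d}{j}c^{d-j}x^{j-n},$$
where the convention $\binom{j}{n}=0$ for $j<n$ automatically removes the terms that would otherwise carry a negative exponent.

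The key step is the subset-of-a-subset identity $\binom{j}{n}\binom{d}{j}=\binom{d}{n}\binom{d-n}{j-n}$, valid for all integers $0\le n\le j\le d$ (with both sides vanishing when $j<n$). This is an identity of integers, hence remains valid in $\F_q$ after reduction modulo $p$. Substituting it and reindexing by $i=j-n$ gives
$$E^{(n)}\big((x+c)^d\big)=\binom{d}{n}\sum_{i=0}^{d-n}\binom{d-n}{i}c^{(d-n)-i}x^{i}=\binom{d}{n}(x+c)^{d-n},$$
the last equality again by the binomial theorem.

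I do not expect a genuine obstacle here; the only points needing (minor) care are the bookkeeping with the convention $\binom{j}{n}=0$ for $j<n$ and the fact that the combinatorial identity is an identity over $\Z$ which we then read modulo $p$. As an alternative, essentially equivalent route, one may instead use the characterizing property $f(x+y)=\sum_{n\ge 0}E^{(n)}(f)(x)\,y^{n}$ of hyper-derivatives: taking $f(x)=(x+c)^d$ yields $f(x+y)=(x+c+y)^d=\sum_{n\ge 0}\binom{d}{n}(x+c)^{d-n}y^{n}$, and comparing the coefficients of $y^{n}$ on both sides gives the claim at once. Either way this is a short verification rather than a substantive argument.
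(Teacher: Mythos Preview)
Your verification is correct; both the direct binomial expansion with the subset-of-a-subset identity and the alternative Taylor-type characterization give clean proofs. Note that the paper does not supply its own proof of this lemma at all---it is simply quoted as a cited result from Lidl--Niederreiter---so there is nothing to compare against beyond observing that your argument is a standard and valid one.
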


\begin{lem}[{\cite[Lemma 6.51]{LN97}}]\label{lem:multiplicity}
Let $f$ be a nonzero polynomial in $\F_q[x]$. If $c$ is a root of $E^{(n)}(f)$ for $n=0,1,\ldots, m-1$, then $c$ is a root of multiplicity at least $m$. 
\end{lem}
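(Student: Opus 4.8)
The plan is to prove the statement via a Hasse--Taylor expansion of $f$ around the point $c$, which is the characteristic-free analog of the usual Taylor formula and is precisely what makes hyper-derivatives useful in positive characteristic. The central identity I would establish is that for every nonzero $f \in \F_q[x]$ and every $c \in \F_q$,
$$
f(x) = \sum_{n \geq 0} E^{(n)}(f)(c)\,(x-c)^n,
$$
where the sum is finite, terminating at $n = \deg f$. Once this is in hand, the conclusion is immediate: the hypothesis says exactly that the coefficients $E^{(n)}(f)(c)$ vanish for $n = 0, 1, \ldots, m-1$, so the expansion reduces to $f(x) = (x-c)^m\, g(x)$ with $g(x) = \sum_{n \geq m} E^{(n)}(f)(c)\,(x-c)^{n-m} \in \F_q[x]$. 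Hence $(x-c)^m \mid f$, i.e.\ $c$ is a root of multiplicity at least $m$.

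First I would prove the expansion formula by direct computation on monomials. Writing $f(x) = \sum_{j=0}^d c_j x^j$ and substituting $x = (x-c) + c$, the binomial theorem gives $x^j = \sum_{n=0}^{j} \binom{j}{n} c^{\,j-n} (x-c)^n$. Summing over $j$ and interchanging the two finite summations yields
$$
f(x) = \sum_{n=0}^{d} \Bigg( \sum_{j=n}^{d} \binom{j}{n} c_j\, c^{\,j-n} \Bigg) (x-c)^n.
$$
I would then recognize the inner coefficient as $E^{(n)}(f)$ evaluated at $c$: by definition $E^{(n)}(f) = \sum_{j} \binom{j}{n} c_j x^{j-n}$, so $E^{(n)}(f)(c) = \sum_{j \geq n} \binom{j}{n} c_j\, c^{\,j-n}$, matching the inner sum exactly. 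This identifies the coefficients in the expansion and completes the proof of the identity, after which the divisibility argument of the first paragraph finishes the proof.

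The point I would be most careful about — and the only genuine subtlety — is that every manipulation must avoid division by integers, since we work over a field of positive characteristic where factorials may vanish. This is exactly why hyper-derivatives are defined using the integer binomial coefficients $\binom{j}{n}$ reduced in $\F_q$, rather than via repeated ordinary differentiation, which would introduce the factor $n!$. The binomial theorem holds over any commutative ring, so the expansion above is valid verbatim over $\F_q$ and no inversion of integers is ever required. A secondary bookkeeping matter is the interchange of summation order, which is harmless because both sums are finite; adopting the convention $\binom{j}{n}=0$ for $j<n$ lets both indices range over $0,\ldots,d$ freely. With these conventions the argument is entirely formal, and there is no serious obstacle beyond keeping the characteristic-free viewpoint consistent throughout.
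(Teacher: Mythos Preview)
Your proof is correct. Note, however, that the paper does not actually prove this lemma: it is quoted without proof as \cite[Lemma 6.51]{LN97}. Your argument via the Hasse--Taylor expansion $f(x)=\sum_{n\ge 0}E^{(n)}(f)(c)(x-c)^n$ is the standard one (and indeed is essentially how Lidl--Niederreiter prove it), so there is nothing to compare against here beyond confirming that your derivation of the expansion and the ensuing divisibility conclusion are sound, which they are.
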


\begin{lem} [Leibniz rule for hyper-derivatives, {\cite[Lemma 6.47]{LN97}}]\label{Leibniz}
If $f, g \in \F_q[x]$, then
$$
E^{(n)}(fg)= \sum_{k=0}^n E^{(k)} (f) E^{(n-k)} (g).
$$
\end{lem}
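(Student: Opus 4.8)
The plan is to reduce the identity to the case of monomials by bilinearity, and then to recognize the resulting combinatorial identity as the Chu--Vandermonde convolution. Both sides of the claimed formula are $\F_q$-bilinear in the pair $(f,g)$: the left-hand side $E^{(n)}(fg)$ is additive and homogeneous in each argument because $E^{(n)}$ is a linear operator and multiplication distributes over addition, while the right-hand side $\sum_{k=0}^n E^{(k)}(f)E^{(n-k)}(g)$ is visibly bilinear since each $E^{(k)}$ is linear. Hence it suffices to verify the identity when $f=x^a$ and $g=x^b$ are monomials; the general case follows by expanding $f$ and $g$ in the monomial basis and summing.

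For monomials I would compute both sides directly from the definition, using the special case $E^{(n)}(x^m)=\binom{m}{n}x^{m-n}$. The left-hand side becomes $E^{(n)}(x^{a+b})=\binom{a+b}{n}x^{a+b-n}$, while the right-hand side becomes
$$
\sum_{k=0}^n \binom{a}{k}\binom{b}{n-k}\, x^{a+b-n}.
$$
Thus the lemma reduces to the scalar identity $\binom{a+b}{n}=\sum_{k=0}^n \binom{a}{k}\binom{b}{n-k}$ interpreted in $\F_q$. This is exactly the Vandermonde convolution, which holds as an identity of integers (for instance by comparing coefficients of $z^n$ on both sides of $(1+z)^{a+b}=(1+z)^a(1+z)^b$ in $\Z[z]$) and therefore descends to $\F_q$ after reduction modulo $p=\operatorname{char}\F_q$.

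Alternatively — and perhaps more cleanly — I would establish the generating-function description of the hyper-derivatives and read the Leibniz rule off from it. Checking on monomials via the binomial theorem and extending by linearity gives, in $\F_q[x,y]$ with $y$ a fresh indeterminate, the identity $f(x+y)=\sum_{n\geq 0}E^{(n)}(f)(x)\,y^n$. Applying this to $f$, $g$, and $fg$, and using $(fg)(x+y)=f(x+y)\,g(x+y)$, one multiplies the two power series in $y$ and compares the coefficient of $y^n$; the resulting Cauchy product is precisely $\sum_{k=0}^n E^{(k)}(f)E^{(n-k)}(g)$.

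I expect no genuine obstacle along either route, as this is a formal identity. The only subtlety worth flagging is the characteristic-$p$ reduction: the binomial coefficients $\binom{j}{n}$ occurring in the definition of $E^{(n)}$ are integers viewed in $\F_q$, so one must prove the combinatorial identity (or the generating-function identity) over $\Z$ first and only afterwards specialize to $\F_q$, rather than manipulating $\binom{\cdot}{\cdot}$ as if it were invertible in the field.
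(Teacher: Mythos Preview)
Your argument is correct: both the reduction to monomials via bilinearity followed by Vandermonde's convolution, and the alternative via the Taylor-type expansion $f(x+y)=\sum_{n\geq 0}E^{(n)}(f)(x)\,y^n$, are standard and valid proofs of the Leibniz rule for Hasse derivatives. The paper itself does not prove this lemma at all---it is quoted without proof from Lidl--Niederreiter \cite[Lemma 6.47]{LN97}---so there is no ``paper's proof'' to compare against; your write-up supplies exactly the kind of self-contained verification one would expect.
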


\subsection{Square-root upper bound}

\begin{lem}\label{lem: trivial}
Let $A \subset \F_q$. If $A+A \subset S_d \cup \{0\}$, then $|A|\leq \sqrt{q}$, with equality holding only if $q$ is a square and $A=-A$. If $A\hat{+}A \subset S_d \cup \{0\}$, then $|A|< \sqrt{q}+3$.    
\end{lem}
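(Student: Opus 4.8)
The plan is to prove Lemma~\ref{lem: trivial} via a standard double-character-sum (or Stepanov-style counting) argument, treating the two assertions separately but using the same core estimate.

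\smallskip

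\textbf{The unrestricted case $A+A \subset S_d \cup \{0\}$.} First I would set $N = |A|$ and let $\chi$ be the multiplicative character of order $d$ on $\F_q^*$, extended by $\chi(0) = 0$. Since $A+A \subset S_d \cup \{0\}$, for every pair $(a,b) \in A \times A$ we have $\chi(a+b) \in \{0,1\}$; in fact $\chi(a+b) = 1$ unless $a+b = 0$. The idea is to count $T = \sum_{a,b \in A} \sum_{j=0}^{d-1} \chi^j(a+b)$. On one hand, the inner sum over $j$ equals $d$ when $a+b \in S_d$, equals $0$ when $a+b \notin S_d \cup \{0\}$ (which never happens here), and equals $0$ when $a+b = 0$ (since $\chi^0(0)$ should be handled with the convention $\chi^j(0)=0$ for all $j$, or one isolates those terms). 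So $T = d \cdot \#\{(a,b) : a+b \neq 0\} = d(N^2 - e)$ where $e = \#\{(a,b) \in A^2 : a+b=0\} \le N$. On the other hand, separating the $j=0$ term gives $T = (N^2 - e') + \sum_{j=1}^{d-1} \sum_{a,b} \chi^j(a+b)$ where $e'$ counts pairs with $a+b=0$ for the principal part; and each nontrivial sum $\sum_{a,b \in A} \chi^j(a+b)$ is bounded in absolute value by the standard estimate: writing $\sum_{a \in A}\sum_{b \in A} \chi^j(a+b)$, one fixes $a$, substitutes, and applies Cauchy--Schwarz over all $y \in \F_q$ followed by the Weil bound to get $\left| \sum_{a,b} \chi^j(a+b) \right| \le \sqrt{N q}$ (roughly). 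Comparing the two expressions for $T$ yields $d N^2 \lesssim N^2 + (d-1)\sqrt{Nq} + (\text{error from the }a+b=0\text{ terms})$, hence $(d-1)N^2 \lesssim (d-1)\sqrt{Nq}$, giving $N^{3/2} \lesssim \sqrt{q}$... which is too weak. So instead I would use the cleaner identity: since every nonzero element of $A+A$ lies in $S_d$, the sets $a+A$ for $a \in A$ all lie in $S_d \cup \{0\}$; but the real efficient route is the \emph{graph-theoretic / eigenvalue} argument, which I describe next as it gives exactly $\sqrt{q}$.

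\smallskip

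\textbf{The sharp bound via Cayley sum graphs.} Consider the Cayley sum graph $G$ on vertex set $\F_q$ where $x \sim y$ iff $x + y \in S_d$. A set $A$ with $A + A \subset S_d \cup \{0\}$ is (almost) a clique: every pair of distinct vertices of $A$ is joined, and loops occur at $x$ with $2x \in S_d$. The complement condition $A + A \cap (\F_q \setminus (S_d \cup \{0\})) = \emptyset$ means $A$ is an independent set in the Cayley sum graph $H$ with connection set $\F_q \setminus (S_d \cup \{0\})$. The eigenvalues of such Cayley sum graphs are $\pm$ Gauss sums (of modulus $\sqrt q$) together with the trivial eigenvalue; the ratio/Hoffman-type bound for independent sets then forces $|A| \le \sqrt{q}$, with equality analysis pinning down $A = -A$ (the equality case of the Hoffman bound requires the independent set to meet each eigenspace in a prescribed way, and the symmetry $A = -A$ comes from the structure of the sum graph — one checks that equality forces $\sum_{a \in A}\chi^j(a) \cdot \overline{(\cdot)}$ relations that are only satisfiable when $A$ is symmetric, which in turn needs $q$ to be a square for the Gauss sum magnitudes to align). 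Concretely, I expect the paper wants: from the character-sum identity, $d N^2 = N^2 + \sum_{j=1}^{d-1}\left(\sum_{a \in A}\chi^j(a)\right)\left(\overline{\sum_{a \in A} \chi^j(a)}\right) \cdot (\text{something})$ — no, cleanest is to note $\sum_{x \in \F_q} \left(\sum_{a \in A} \chi^j(a - x)\text{-type}\right)$; rather than reconstruct it, I would cite the known fact (e.g., from the literature on Paley-type sum graphs, or prove directly) that $\sum_{a, b \in A} \chi^j(a+b) = \left| \sum_{a \in A} (\text{additive-character transform}) \right|^2$-style and is therefore nonnegative-real after symmetrization, giving $d N^2 - N^2 \le (d-1) \cdot N \sqrt q$ only if one is careless — the correct inequality that yields $N \le \sqrt q$ is $N(N - 1) \le \frac{q-1}{d} + (\text{lower order})$...

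\smallskip

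Let me instead commit to the direct and honest plan: \textbf{I would follow the Stepanov-type / Weil-bound computation exactly as in \cite{HP} or \cite{Y23+}.} Let $\chi$ have order $d$. Put $f(x) = \sum_{a \in A} \chi(x + a)$ (with $\chi(0) = 0$). Since $A + A \subset S_d \cup \{0\}$, for each $b \in A$ we have $f(b) = \sum_{a \in A} \chi(a + b) = |A| - 1 - \#\{a \in A : a + b = 0\} \ge |A| - 2$ (the $a+b=0$ term contributes $0$, and the $a = b$ or $a = -b$ cases subtract). So $f(b) \ge |A| - 2$ for all $b \in A$. Now $\sum_{x \in \F_q} |f(x)|^2 = \sum_{a, a' \in A} \sum_x \chi(x+a)\overline{\chi(x+a')} = \sum_{a,a'} \left( q \cdot \mathbf{1}[a = a'] - 1 + (\text{correction for zeros})\right) \le q |A|$ (using the orthogonality/Weil bound $\left|\sum_x \chi(x+a)\overline{\chi(x+a')}\right| \le 1$ for $a \neq a'$, with equality-ish, so the off-diagonal contributes $\ge -|A|(|A|-1)$ but we want an upper bound: $\sum_x |f(x)|^2 = q|A| - |A|^2 + O(|A|) \le q|A|$). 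On the other hand $\sum_{x \in \F_q} |f(x)|^2 \ge \sum_{b \in A} |f(b)|^2 \ge |A| (|A| - 2)^2$. Combining: $|A|(|A|-2)^2 \le q|A|$, hence $|A| - 2 \le \sqrt q$, i.e. $|A| < \sqrt q + 3$. This proves the second (restricted) assertion. For the first assertion, when $A + A \subset S_d \cup \{0\}$ with no restriction, we get $f(b) = |A| - \#\{a : a+b = 0\} \ge |A| - 1$, and tracking constants more carefully (the $-1$ off-diagonal terms in $\sum|f(x)|^2$ are \emph{exactly} $-1$ when $a - a' \notin$ (stuff), giving $\sum_x|f(x)|^2 = q|A| - |A|^2 + (\text{exact count of }a,a'\text{ with }a=-a'\text{-type terms}))$; being careful yields $|A|(|A|-1)^2 \le \sum_x |f(x)|^2 \le q|A| - |A|^2 + |A|$ roughly, which simplifies — after honest bookkeeping — to $|A|^2 \le q$, hence $|A| \le \sqrt q$. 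The equality case $|A| = \sqrt q$ forces every inequality tight: $f(b) = |A| - 1$ exactly for all $b$ (so $A \cap (-A) = \emptyset$ except... actually forces no $a + b = 0$ with $a, b \in A$, no wait we need $f(b) = |A|-1$ which means exactly one $a$ with $a + b = 0$, i.e. $-b \in A$ for every $b \in A$, so $A = -A$), and the Weil bound must be tight everywhere which forces $q$ to be a square (Gauss sums of modulus exactly $\sqrt q$ / tightness in $|\sum_x \chi(x+a)\overline{\chi(x+a')}| = 1$ becoming an equality in aggregate requires the quadratic-type character sum to behave like the Paley case, available only for square $q$).

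\smallskip

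\textbf{Main obstacle.} The delicate point is the \emph{exact} bookkeeping of the diagonal versus off-diagonal terms in $\sum_{x \in \F_q} |f(x)|^2$ and of the "zero corrections" in $f(b)$, since these constants are precisely what distinguishes the clean bound $\sqrt q$ (case $A + A$) from $\sqrt q + 3$ (case $A \hat + A$) and what drives the equality characterization $A = -A$ with $q$ square. I expect to need the identity $\sum_{x \in \F_q} \chi(x + a)\overline{\chi(x + a')} = q \cdot \mathbf 1[a = a'] - 1 + \delta$ where $\delta$ accounts for the single value $x = -a$ (and $x = -a'$) at which $\chi$ vanishes — getting every such $\pm 1$ and every $\mathbf 1[{-b} \in A]$ correct is the whole content, and conflating them is the easy way to get a wrong constant. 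The equality analysis (that $|A| = \sqrt q$ implies $q$ is a square) is the second subtle point: it should follow because $|A| = \sqrt q$ integer forces $q$ to be a perfect square outright, so that part is actually immediate once the bound $|A| \le \sqrt q$ is established — the real work is just showing $A = -A$, which drops out of the equality case of the Cauchy--Schwarz/orthogonality step.
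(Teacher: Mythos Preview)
Your final approach via the second moment of $f(x)=\sum_{a\in A}\chi(x+a)$ is a reasonable method, and (with corrected constants) it does deliver the second assertion $|A|<\sqrt q+3$. However, it does \emph{not} reach the sharp bound $|A|\le\sqrt q$ in the first assertion. In the $A+A\subset S_d\cup\{0\}$ case one actually has $f(b)=|A|-\mathbf 1[-b\in A]\ge|A|-1$ (the $a=b$ term contributes $\chi(2b)=1$, not $0$, since $2b\in S_d$ when $b\ne 0$), and the exact identity is $\sum_{x\in\F_q}|f(x)|^2=|A|(q-|A|)$. Restricting the sum to $x\in A$ then yields only $(|A|-1)^2\le q-|A|$, i.e.\ $|A|^2-|A|+1\le q$. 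This is strictly weaker than $|A|^2\le q$: for instance $q=7$, $|A|=3$ satisfies the former but not the latter. The loss comes from discarding the nonnegative contribution of $x\notin A$ to $\sum_x|f(x)|^2$, and no rearrangement of Cauchy--Schwarz on $f$ recovers it.

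The paper avoids this by working with the \emph{first} moment directly. It quotes the Fourier-analytic estimate
\[
\bigg|\sum_{a,b\in A}\chi(a+b)\bigg|\le\sqrt q\,|A|\bigg(1-\frac{|A|}{q}\bigg)
\]
(this is where Gauss sums enter, via Parseval on the additive side, not via any ``tightness of Weil'' argument), and combines it with the trivial lower bound $\sum_{a,b\in A}\chi(a+b)\ge|A|^2-|A|$, valid because every term lies in $\{0,1\}$. This gives $|A|-1\le\sqrt q-|A|/\sqrt q$, hence $|A|\le\sqrt q$ exactly; equality forces the lower bound to be tight, meaning exactly $|A|$ pairs have $a+b=0$, which is $A=-A$, and $q$ a square is then automatic from $|A|=\sqrt q\in\Z$. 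For the restricted case the paper repeats the same comparison, losing an extra $2|A|$ from the uncontrolled diagonal terms $\chi(2a)$, to get $|A|^2-3|A|<\sqrt q\,|A|$.

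So the missing ingredient is precisely the sharper double-character-sum inequality with the factor $(1-|A|/q)$; this does not fall out of the second-moment route you committed to, and without it the first assertion is not proved.
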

\begin{proof}
Let $\chi$ be a multiplicative character of $\F_q$ with order $d$. We have the following double character sum estimate (see for example \cite[Theorem 2.6]{Y22}):
 \begin{equation}\label{eq:double}
 \bigg|\sum_{a,b\in A}\chi(a+b)\bigg|  \leq \sqrt{q}|A|\bigg(1-\frac{|A|}{q}\bigg).
 \end{equation}

We first assume that $A+A \subset S_d \cup \{0\}$. Note that for each $a \in A$, we have $a+b \in S_d$ for each $b \in A$, unless $a+b=0$. It follows that
 \begin{equation}\label{eq:lb}
\sum_{a,b\in A}\chi(a+b)=|A|^2-\#\{(a,b) \in A \times A: a+b=0\}\geq |A|^2-|A|,
 \end{equation}
and the equality holds if and only if $A=-A$. Combining inequality~\eqref{eq:double} and inequality~\eqref{eq:lb}, we have
$$
|A|^2-|A| \leq \sqrt{q}|A|\bigg(1-\frac{|A|}{q}\bigg).
$$
It follows that $|A|\leq \sqrt{q}$, and the equality holds only if $A=-A$.

Next, we work under the weaker assumption that $A\hat{+}A \subset S_d \cup \{0\}$. The proof is essentially the same, and the only difference is $\chi(2a)$ could be anything. We instead have
$$
|A|^2-3|A| \leq \bigg|\sum_{a,b\in A}\chi(a+b)\bigg|\leq \sqrt{q}|A|\bigg(1-\frac{|A|}{q}\bigg)<\sqrt{q}|A|,
$$
which implies that $|A|<\sqrt{q}+3$.
\end{proof}

\subsection{Sumsets in multiplicative subgroups}\label{sec: sumset}

The following theorem is an extension of \cite[Theorem 1.2]{HP}, which is the main result in \cite{HP} by Hanson and Petridis, to all finite fields, with an extra assumption on the non-vanishing of a binomial coefficient. Its proof is based on Stepanov's method.

\begin{thm}[{\cite[Theorem 1.1]{Y24}}]\label{thm:sumset}
Let $d \mid (q-1)$ such that $d>1$. If $A,B \subset \F_q$ such that $A+B\subset S_d \cup \{0\}$ and $
\binom{|B|-1+\frac{q-1}{d}}{\frac{q-1}{d}}\not \equiv 0 \pmod p,$ 
then 
$$|A||B|\leq \frac{q-1}{d}+|A \cap (-B)|.$$
\end{thm}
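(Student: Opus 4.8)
The plan is to prove Theorem~\ref{thm:sumset} by Stepanov's method: construct a nonzero auxiliary polynomial $F \in \F_q[x]$ of small degree that is forced by the hypothesis to vanish to high order at every element of $A$, and then compare its degree with the total multiplicity of its zeros.

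First I would set $t = \frac{q-1}{d} = |S_d|$ and record that $S_d = \{x \in \F_q : x^t = 1\}$, so that for every pair $(a,b) \in A \times B$ with $a+b \neq 0$ we have $(a+b)^t = 1$. It is also convenient to note that, since $\#\{(a,b) \in A\times B : a+b=0\} = |A \cap (-B)|$, the target inequality $|A||B| \le t + |A\cap(-B)|$ is equivalent to $\#\{(a,b)\in A\times B : a+b\in S_d\} \le |S_d|$, i.e.\ to the injectivity of the addition map on the pairs summing into $S_d$ (a ``co-Sidon'' statement); this also explains the shape of the bound.

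Second, the construction. The natural building blocks are the polynomials $(x+b)^t - 1$ for $b\in B$: each of them vanishes at every $a\in A$ with $a\ne -b$, and all of them share the leading term $x^t$. The product $\prod_{b\in B}\bigl((x+b)^t-1\bigr)$ thus vanishes to order roughly $|B|$ at each point of $A$, but its degree $|B|t$ is far too large. The crux is to replace this product by a polynomial $F$ of degree at most $t + |B| - 1$ retaining the high-order vanishing: one takes $F$ to be a suitable $\F_q[x]$-linear combination of the powers $(x+b)^t$ together with their low-order hyper-derivatives $E^{(k)}\bigl((x+b)^t\bigr) = \binom{t}{k}(x+b)^{t-k}$ (Lemma~\ref{lem:differentiate}), with coefficients chosen so that every monomial of degree exceeding $t+|B|-1$ cancels. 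Computing the top-degree coefficient of $F$, one finds it is a unit multiple of $\binom{|B|-1+t}{t}$, so the hypothesis $\binom{|B|-1+(q-1)/d}{(q-1)/d}\not\equiv 0 \pmod p$ is precisely what guarantees $F\ne 0$.

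Third, the multiplicity count. Using the Leibniz rule for hyper-derivatives (Lemma~\ref{Leibniz}) together with the relations $(a+b)^t = 1$, I would check that for every $a\in A$ the hyper-derivatives $E^{(0)}(F),\dots,E^{(|B|-1)}(F)$ all vanish at $a$, with one order fewer at the (at most $|A\cap(-B)|$) points of the form $-b$, where one building block fails to vanish. By Lemma~\ref{lem:multiplicity}, $F$ has a zero of multiplicity $\ge |B|$ at each $a \in A\setminus(-B)$ and of multiplicity $\ge |B|-1$ at each $a\in A\cap(-B)$, whence
$$|B|\bigl(|A|-|A\cap(-B)|\bigr) + (|B|-1)\,|A\cap(-B)| \;\le\; \deg F \;\le\; t + |B| - 1 .$$
Rearranging gives $|A||B| \le t + |A\cap(-B)| + |B| - 1$, and the remaining slack of $|B|-1$ is removed by a slightly more careful version of the construction (for instance, arranging $F$ to be divisible by $\prod_{b\in B}(x+b)$, which supplies the extra vanishing at the points $-b\in A$ without cost), yielding $|A||B| \le \frac{q-1}{d} + |A\cap(-B)|$.

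I expect the main obstacle to be the construction in the second step: producing a polynomial of degree only $\frac{q-1}{d}+O(|B|)$ that is nonetheless automatically divisible by $(x-a)^{|B|}$ for every $a\in A$, while simultaneously pinning down its leading coefficient as the prescribed binomial coefficient. Making the degree bookkeeping sharp enough to land on exactly $\frac{q-1}{d}+|A\cap(-B)|$ — in particular handling the pairs with $a+b=0$ without losing more than $|A\cap(-B)|$ — is the delicate point; the hyper-derivative identities and the passage from vanishing derivatives to multiplicity are routine given the lemmas already recorded.
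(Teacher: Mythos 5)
Your overall strategy --- Stepanov's method with an auxiliary polynomial whose leading coefficient is the binomial coefficient in the hypothesis, followed by a multiplicity count that loses one order of vanishing at each point of $A\cap(-B)$ --- is exactly the route of the cited proof (Hanson--Petridis and its extension in \cite{Y23+}), and your reformulation of the conclusion as a co-Sidon statement is correct. However, there is a genuine gap at the heart of the argument: you never actually produce the auxiliary polynomial. Your construction only reaches $\deg F \le t+|B|-1$ with $t=\frac{q-1}{d}$, you acknowledge a slack of $|B|-1$, and the patch you offer does not work. Arranging $F$ to be divisible by $\prod_{b\in B}(x+b)$ can only raise the vanishing order at the $|A\cap(-B)|$ points of $A$ of the form $-b$; at best it upgrades the count to $|A||B|\le t+|B|-1$, which is \emph{weaker} than the claimed bound whenever $|A\cap(-B)|<|B|-1$ (for instance, when $A\cap(-B)=\emptyset$ one must prove $|A||B|\le t$, not $|A||B|\le t+|B|-1$). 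The slack cannot be removed by adding vanishing; it must be removed by lowering the degree.

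The missing idea is the precise choice of building blocks and coefficients. Writing $B=\{b_1,\dots,b_n\}$, one takes \emph{scalar} coefficients $c_1,\dots,c_n$ solving the Vandermonde system $\sum_i c_ib_i^j=0$ for $0\le j\le n-2$ and $\sum_i c_ib_i^{n-1}=1$, and sets $f(x)=\sum_{i=1}^n c_i(x+b_i)^{t+n-1}-1$ (note the exponent $t+n-1$ rather than $t$, and constants rather than polynomial coefficients). The coefficient of $x^{t+n-1-j}$ in $f$ is $\binom{t+n-1}{j}\sum_i c_ib_i^j$, which vanishes for $0\le j\le n-2$, so $\deg f\le t$; the coefficient of $x^{t}$ is $\binom{t+n-1}{n-1}$, nonzero by hypothesis, so $f\ne 0$ and $\deg f=t$ exactly --- there is no slack. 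The relations $(a+b_i)^t=1$ then give $E^{(k)}f(a)=0$ for $0\le k\le n-1$ at every $a\in A\setminus(-B)$ and for $0\le k\le n-2$ at every $a\in A\cap(-B)$, and your own multiplicity count applied with $\deg f=t$ yields $n|A|-|A\cap(-B)|\le t$, which is the theorem. (This is the same mechanism the present paper uses in Propositions 3.1 and 3.2, where the Vandermonde conditions collapse the degree of $\sum_i c_i(x+a_i)^{m+\frac{q-1}{d}}(x-a_i)^m$ down to $\frac{q-1}{d}$.) So the skeleton of your argument is sound, but the one step you defer --- ``a slightly more careful version of the construction'' --- is precisely the nontrivial content, and the version you sketch does not supply it.
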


As remarked in \cite{Y24}, in general, the condition on the binomial coefficient cannot be dropped. On the other hand, when $q$ is a prime, it is easy to verify that the condition on the binomial coefficient always holds, and thus Theorem~\ref{thm:main} recovers \cite[Theorem 1.2]{HP}.  Theorem~\ref{thm:sumset} can be used to make progress on S\'{a}rk\"{o}zy's conjecture and its generalization; see \cite{HP, Y24}. 

We remark that Theorem~\ref{thm:sumset} can already be used to deduce non-trivial results on restricted sumsets. For example, it is straightforward to apply Theorem~\ref{thm:sumset} to show the following: if $A \subset \F_p$ and $A \hat{+} A \subset S_d$, then $|A|\leq 2\sqrt{p/d}+O(1)$; indeed, we can write $A$ as the disjoint union of two sets $B$ and $C$ such that $|B|$ and $|C|$ differ by $1$, and then $B+C \subset A \hat{+} A \subset S_d$. However, such ``naive" applications of Theorem~\ref{thm:sumset} are not strong enough for our applications to restricted sumset decompositions. Furthermore, under the same setting, Theorem~\ref{thm:ub} provides a bound of the form $\sqrt{2p/d}+O(1)$, which is much better. For our applications, we still require Theorem~\ref{thm:sumset}, but we will use it in a rather indirect way. Moreover, we also need to establish an analogue of Theorem~\ref{thm:sumset} that is particularly designed for restricted sumsets, which we discuss next.

\section{Upper bounds on $|A|$ assuming $A\hat{+}A \subset S_d$}\label{sec3}

In this section, our main aim is to give an upper bound on $|A|$ provided that $A\hat{+}A \subset S_d$. Note that if $A+A \subset S_d \cup \{0\}$, then we can use Theorem~\ref{thm:sumset} to give an upper bound on $|A|$. Thus we can further assume that $A+A \not \subset S_d \cup \{0\}$, and indeed we will take advantage of this additional assumption in a crucial way in the following proofs. Since the restricted subset $A \hat{+} A$ is generally much more difficult to analyze compared to the sumset $A+A$, when we apply Stepanov's method in this setting, the arguments we present will be more delicate compared to those used to study sumsets \cite{HP, Y24}. 

\subsection{Applications of Stepanov's method}

The following proposition may be viewed as an analogue of Theorem~\ref{thm:sumset} in the restricted sumset setting.

\begin{prop}\label{prop:main}
Let $d \geq 2$ and let $q \equiv 1 \pmod d$ be an odd prime power. Let $A \subset \F_q$ with $|A|=N$ such that $A\hat{+}A \subset S_d$ while $A+A \not \subset S_d \cup \{0\}$. 
\begin{enumerate}
    \item If $N$ is odd, then
    $$
    \frac{N(N-1)}{2}+\#\{a \in A: 2a \in S_d \cup \{0\}\}  \leq \frac{q-1}{d}.
    $$
    \item  If $N$ is even, then
    $$
    \frac{(N-1)^2+1}{2} \leq \frac{q-1}{d}.
    $$
    Moreover, further assume that $0 \in A$, then we have the stronger estimate $$\frac{N(N-1)}{2}+\#\{a \in A: 2a \in S_d\}  \leq \frac{q-1}{d}.$$
\end{enumerate}
\end{prop}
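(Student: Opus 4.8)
The plan is to run Stepanov's method on a polynomial that vanishes to high order at every point of a suitably chosen subset of $A$, exploiting the hypothesis $A\hat{+}A \subset S_d$ — which translates into the statement that, for distinct $a,b \in A$, the polynomial $(x+a)^{(q-1)/d}$ and $(x+b)^{(q-1)/d}$ agree on all of $A$ (up to the $d$-th power reduction), since $(a+b)^{(q-1)/d}=1$. Concretely, set $m=(q-1)/d$ and work with the ``indicator-type'' polynomial $\Phi(x)=(x+a_0)^{m}-1$ for a fixed $a_0 \in A$; for every $b \in A$ with $b \neq a_0$ we have $\Phi(b)= (a_0+b)^m - 1 = 0$ because $a_0+b \in S_d$. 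The refinement needed here, compared to the sumset case in Theorem~\ref{thm:sumset}, is that the ``diagonal'' term $x=-a_0$ (equivalently $2a_0$) has to be tracked separately: whether $2a_0 \in S_d$, equals $0$, or is a non-$d$-th-power determines whether $-a_0$ (or the relevant shift) is also a root of $\Phi$. This is the source of the correction terms $\#\{a\in A: 2a\in S_d\cup\{0\}\}$ in the statement.

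The heart of the argument is to build an auxiliary polynomial $F(x)$ of controlled degree, of the form $F(x) = \sum_i g_i(x)\,\big((x+a_i)^m\big)^{?}$ or more simply a product/combination designed so that $F$ vanishes at each point of $A$ (or of $A$ minus one point, depending on parity) to order at least $k$ for a well-chosen $k$, while $\deg F$ stays below $k\cdot(\text{number of forced roots})$. The hyper-derivative toolkit from Section~2.1 — Lemma~\ref{lem:differentiate}, Lemma~\ref{lem:multiplicity}, and the Leibniz rule Lemma~\ref{Leibniz} — is exactly what lets one certify high-order vanishing in positive characteristic without the usual derivative identities failing. The parity split arises because when $N=|A|$ is even one cannot symmetrically pair up all elements; one loses one element, which costs a $+1$ versus $+1/2$ in the bookkeeping and produces the weaker bound $\frac{(N-1)^2+1}{2}\le m$ rather than $\frac{N(N-1)}{2}\le m$. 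The hypothesis $A+A\not\subset S_d\cup\{0\}$ is used to guarantee that $F$ is not identically zero: if every pairwise sum including the doubles $2a$ landed in $S_d\cup\{0\}$, the would-be auxiliary polynomial could degenerate, so we need a genuine non-$d$-th-power value $2a_0+0$ or $a+a$ to anchor the nonvanishing. Counting: a nonzero polynomial of degree $D$ has at most $D$ roots with multiplicity, so $k\cdot(\#\text{forced roots}) \le \deg F$, and rearranging gives the claimed inequalities after optimizing $k$ (here $k$ will essentially be forced, e.g. $k = \lceil \text{something} \rceil$, or one takes $k$ as large as the degree budget allows).

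For the ``moreover'' part with $0 \in A$: when $0\in A$, the element $a_0 = 0$ contributes the sum $0+b = b \in S_d$ for all other $b\in A$, so $A\setminus\{0\} \subset S_d$ directly, and additionally $0+0 = 0$, which removes one of the awkward diagonal cases; this extra structural input lets one recover the stronger odd-type bound $\frac{N(N-1)}{2}+\#\{a\in A: 2a\in S_d\}\le m$ even in the even case, because the ``lost'' element can be taken to be $0$ and it carries no penalty.

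The step I expect to be the main obstacle is the precise construction and degree estimate of the auxiliary polynomial $F$ in the even case while simultaneously proving $F\not\equiv 0$: one must choose $F$ so that (a) it has the forced roots at the $N-1$ chosen points each to order $k$, (b) its degree is small enough that the counting inequality bites, and (c) it is genuinely nonzero — and reconciling (b) and (c) is delicate because the natural candidates for $F$ (built from the $(x+a_i)^m$) tend to have leading coefficients that are themselves sums over $A$ of values like $\chi(2a_i)$ or binomial coefficients mod $p$, and one has to argue that at least one such coefficient is nonzero. This is exactly where the extra hypothesis $A+A\not\subset S_d\cup\{0\}$, together with a careful choice of which point to single out, has to be deployed; getting the nonvanishing argument to work uniformly across the parity cases (and the $0\in A$ sub-case) is the technical crux.
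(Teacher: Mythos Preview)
Your outline correctly identifies the overall architecture: Stepanov's method via hyper-derivatives, the role of the diagonal values $2a$ in producing the correction term, the parity dichotomy, and the use of $A+A\not\subset S_d\cup\{0\}$ to certify nonvanishing. But the proposal stops exactly where the actual content of the proof begins, and the candidate polynomial you write down is not strong enough. The single-element polynomial $\Phi(x)=(x+a_0)^{(q-1)/d}-1$ has only simple roots at the points of $A\setminus\{a_0\}$, giving at best $N-1\le (q-1)/d$, which is far weaker than the quadratic bounds claimed.

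What the paper does is the piece you flag as the ``main obstacle''. Set $n=N$ if $N$ is odd and $n=N-1$ if $N$ is even, and put $\mu=(n-1)/2$ (not $(q-1)/d$). Using the Vandermonde matrix in $a_1,\dots,a_n$, choose scalars $c_1,\dots,c_n$ so that $\sum_i c_i a_i^{j}=0$ for $0\le j\le n-2$ and $\sum_i c_i a_i^{n-1}=1$. The auxiliary polynomial is
\[
f(x)=-(-1)^{\mu}+\sum_{i=1}^{n} c_i\,(x+a_i)^{\mu+(q-1)/d}(x-a_i)^{\mu}.
\]
The Vandermonde relations kill the top $2\mu$ coefficients of $f$, so $\deg f\le (q-1)/d$. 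The factor $(x-a_i)^{\mu}$ is the device that generates high multiplicity: from $A\hat{+}A\subset S_d$ one has $(a_i+a_j)^{(q-1)/d}(a_j-a_i)=a_j-a_i$ for all $i,j$, and this identity (together with the Vandermonde relations) makes $E^{(r)}f(a_j)=0$ for $0\le r\le \mu-1$ and every $a_j\in A$. At level $r=\mu$ one computes $E^{(\mu)}f(a_j)=c_j(2a_j)^{\mu}\big((2a_j)^{(q-1)/d}-1\big)$, which is precisely where the $\#\{a:2a\in S_d\cup\{0\}\}$ term and the nonvanishing of $f$ (via some $2a_{j_0}\notin S_d\cup\{0\}$) both come from. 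In the even case the extra point $a_N$ gets multiplicity $\mu+1$, and when $a_N=0$ one can push its multiplicity all the way to $2\mu+1=N-1$ by exploiting $a_i^{(q-1)/d}=1$ for the remaining $a_i$; this is what recovers the odd-type bound.

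So the gap is not conceptual but concrete: without the Vandermonde-weighted sum and the $(x+a_i)^{\mu+(q-1)/d}(x-a_i)^{\mu}$ shape, you have no mechanism to force multiplicity $\approx N/2$ at $N$ points while keeping the degree at $(q-1)/d$, and hence no way to reach a quadratic-in-$N$ inequality.
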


\begin{proof}
Let $A=\{a_1,a_2,\ldots, a_N\}$. If $N=1$, the statements are trivially true. Next, assume $N \geq 2$. If $0 \in A$, without loss of generality we assume that $a_N=0$. Let $n=N$ if $n$ is odd, and let $n=N-1$ if $n$ is even. Let $m=(n-1)/2$; then $m \geq 1$.

Since $A \hat{+} A \subset S_d$, we have 
\begin{equation}\label{eq:obs}
(a_i+a_j)^\frac{q-1}{d} (a_j-a_i)=a_j-a_i    
\end{equation}
for each $1 \leq i,j \leq N$ (note that when $i=j$, both sides are equal to $0$). This simple observation will be used repeatedly in the following computation.

The Vandermonde matrix $$(a_i^j)_{1 \leq i \leq n, 0 \leq j \leq n-1}$$ is invertible. Let $c_1,c_2,...,c_n$ be the unique solution of the following system of equations:
\begin{equation} \label{system1} 
\left\{
\TABbinary\tabbedCenterstack[l]{
\sum_{i=1}^n c_i a_i^j=0,  \quad 0 \leq j \leq 2m-1=n-2\\\\
\sum_{i=1}^n c_i a_i^{n-1}=1
}\right.    
\end{equation}
We note that $c_i \neq 0$ for each $1 \leq i \leq n$; otherwise we must have $c_1=c_2=\ldots=c_n=0$ in view of the first $n-1$ equations in system~\eqref{system1}, which contradicts the last equation in system~\eqref{system1}.

Consider the following auxiliary polynomial 
\begin{equation}\label{poly1}
f(x)=-(-1)^m+\sum_{i=1}^n c_i (x+a_i)^{m+\frac{q-1}{d}} (x-a_i)^{m}\in \F_q[x].    
\end{equation}
First, observe that the degree of $f$ is at most $\frac{q-1}{d}$. Indeed, for each $0 \leq j \leq 2m-1$, the coefficient of $x^{2m+\frac{q-1}{d}-j}$ in $f(x)$ is 
\begin{align*}
 &\sum_{i=1}^n  \sum_{k=0}^j \bigg(\binom{m+\frac{q-1}{d}}{k} c_i a_i^{k} \cdot \binom{m}{j-k}  (-a_i)^{j-k}\bigg)\\
 &=\sum_{k=0}^j \binom{m+\frac{q-1}{d}}{k} \binom{m}{j-k} \cdot \bigg(\sum_{i=1}^n c_i a_i^{k} (-a_i)^{j-k}\bigg)\\
 &=\bigg(\sum_{i=1}^n c_i a_i^{j} \bigg) \cdot \bigg(\sum_{k=0}^j \binom{m+\frac{q-1}{d}}{k} \binom{m}{j-k} (-1)^{j-k}\bigg)=0
\end{align*}
by the assumption in system~\eqref{system1}.

For each $1\leq j \leq N$, equation~\eqref{eq:obs} and system~\eqref{system1} imply that
\begin{align*}
E^{(0)} f (a_j)
= f (a_j)
&= -(-1)^m+\sum_{i=1}^n c_i (a_j+a_i)^{m+\frac{q-1}{d}} (a_j-a_i)^{m} \\
&=  -(-1)^m+\sum_{i=1}^n c_i (a_j+a_i)^{m} (a_j-a_i)^{m}\\
&=  -(-1)^m+\sum_{i=1}^n c_i (a_j^2-a_i^2)^{m}    \\
&= -(-1)^m+ \sum_{\ell=0}^m \binom{m}{\ell} a_j^{2(m-\ell)} (-1)^\ell\bigg(\sum_{i=1}^n c_i a_i^{2\ell}\bigg)   \\
&= -(-1)^m+ (-1)^m \bigg(\sum_{i=1}^n c_i a_i^{n-1}\bigg)   
=0.
\end{align*}

Next, we compute the hyper-derivatives of $f$ on $A$. We first prove the following claim: for each $1 \leq j \leq N$, if $0 \leq k_1\leq m$ and $0\leq k_2<m$ such that $1\leq k_1+k_2 \leq m$, then we have 
\begin{equation}\label{eq:claim}
\sum_{i=1}^n c_i E^{(k_1)}[(x+a_i)^{m+\frac{q-1}{d}}](a_j) \cdot E^{(k_2)}[(x-a_i)^m](a_j)=0.
\end{equation}
Indeed, by Lemma~\ref{lem:differentiate} and equation~\eqref{eq:obs}, we have
\begin{align*}
&\sum_{i=1}^n c_i E^{(k_1)}[(x+a_i)^{m+\frac{q-1}{d}}](a_j) \cdot E^{(k_2)}[(x-a_i)^m](a_j)\\
&= \binom{m+\frac{q-1}{d}}{k_1} \binom{m}{k_2} \bigg(\sum_{i=1}^n c_i (a_j+a_i)^{m-k_1+\frac{q-1}{d}} (a_j-a_i)^{m-k_2}\bigg) \\
&= \binom{m+\frac{q-1}{d}}{k_1} \binom{m}{k_2} \bigg(\sum_{i=1}^n c_i (a_j+a_i)^{m-k_1} (a_j-a_i)^{m-k_2}\bigg) \\
&= \binom{m+\frac{q-1}{d}}{k_1} \binom{m}{k_2} \cdot\sum_{\ell_1=0}^{m-k_1} \sum_{\ell_2=0}^{m-k_2} \binom{m-k_1}{\ell_1} \binom{m-k_2}{\ell_2} \bigg(\sum_{i=1}^n c_i a_j^{m-k_1-\ell_1} a_i^{\ell_1}  a_j^{m-k_2-\ell_2}(-a_i)^{\ell_2}\bigg)\\
&= \binom{m+\frac{q-1}{d}}{k_1} \binom{m}{k_2} \cdot\sum_{\ell_1=0}^{m-k_1} \sum_{\ell_2=0}^{m-k_2} \binom{m-k_1}{\ell_1} \binom{m-k_2}{\ell_2} a_j^{(m-k_1-\ell_1)+(m-k_2-\ell_2)} (-1)^{\ell_2}\bigg(\sum_{i=1}^n c_i  a_i^{\ell_1+\ell_2} \bigg)\\
&=0,
\end{align*}
where we again use the assumptions in system~\eqref{system1}, since we always have $\ell_1+\ell_2\leq 2m-k_1-k_2 \leq 2m-1$ for the exponent in the last summand.

From the above claim~\eqref{eq:claim} and Lemma~\ref{Leibniz}, it follows that for each $1\leq j \leq N$ and $1 \leq r \leq m-1$, we have  
\begin{align*}
E^{(r)} f (a_j)
=  \sum_{i=1}^n c_i \bigg(\sum_{k=0}^r E^{(k)}[(x+a_i)^{m+\frac{q-1}{d}}](a_j) \cdot E^{(r-k)}[(x-a_i)^{m}](a_j)\bigg) =0.
\end{align*}
Similarly, for each $1 \leq j \leq N$, we have
\begin{align}
E^{(m)} f (a_j)
&=  \sum_{i=1}^n c_i \bigg(\sum_{k=0}^m E^{(k)}[(x+a_i)^{m+\frac{q-1}{d}}](a_j) \cdot E^{(m-k)}[(x-a_i)^{m}](a_j)\bigg) \notag \\
&= \sum_{i=1}^n c_i E^{(0)}[(x+a_i)^{m+\frac{q-1}{d}}](a_j) \cdot E^{(m)}[(x-a_i)^{m}](a_j) \notag\\
&= \sum_{i=1}^n c_i (a_j+a_i)^{m+\frac{q-1}{d}}. \label{Neven}
\end{align}

Recall that if $j \neq i$, then $a_j+a_i \in S_d$ and thus $(a_j+a_i)^{\frac{q-1}{d}}=1$. If $1 \leq j \leq n$, then equation~\eqref{Neven} further simplifies to
\begin{align}
E^{(m)} f (a_j)
&= c_j (2a_j)^{m+\frac{q-1}{d}}+\sum_{\substack{1\leq i \leq n \\i \neq j}} c_i (a_j+a_i)^{m} \notag\\
&= c_j (2a_j)^{m+\frac{q-1}{d}} -c_j (2a_j)^m +\sum_{i=1}^n c_i (a_j+a_i)^{m} \notag\\
&= c_j (2a_j)^m \bigg((2a_j)^{\frac{q-1}{d}}-1\bigg)+\sum_{k=0}^m \binom{m}{k} a_j^{m-k} \bigg(\sum_{i=1}^n c_i a_i^k\bigg) \notag\\
&=c_j (2a_j)^m \bigg((2a_j)^{\frac{q-1}{d}}-1\bigg) \label{m+1}.
\end{align}
Note that if $2a_j=0$, that is, $a_j=0$, then $E^{(m)} f (a_j)=0$. If $2a_j \in S_d$, then $(2a_j)^{\frac{q-1}{d}}=1$ and we also have $E^{(m)} f (a_j)=0$. Conversely, if $E^{(m)} f (a_j)=0$, then we must also have $2a_j \in S_d \cup \{0\}$ since $c_j \neq 0$. Thus, $E^{(m)} f (a_j)=0$ if and only if $2a_j \in S_d \cup \{0\}$. Since $A +A \not \subset S_d \cup \{0\}$, there is $1\leq j_0\leq n$ such that $2a_{j_0} \not \in S_d \cup \{0\}$, and thus $E^{(m)} f (a_{j_0})\neq 0$, which implies that $f$ is not identically $0$. 

In view of Lemma~\ref{lem:multiplicity}, for each $1 \leq j \leq n$, we have shown that $a_j$ is a root of $f$ with multiplicity at least $m$; moreover, if additionally $2a_j \in S_d \cup \{0\}$, then $a_j$ is a root with multiplicity at least $m+1$. To complete the proof, we consider the parity of $N$.

(1) $N$ is odd. In this case, $n=N$ and $m=\frac{N-1}{2}$. We conclude that
$$
\frac{N(N-1)}{2}+\#\{a \in A: 2a \in S_d \cup \{0\}\}=mN+\#\{1 \leq j \leq N: 2a_j \in S_d \cup \{0\}\} \leq \deg f \leq \frac{q-1}{d}.
$$

(2) $N$ is even. In this case, $n=N-1$. Note that $a_i+a_N \in S_d$ for each $1 \leq i \leq n$, and thus equation~\eqref{Neven} and system~\eqref{system1} imply that
\begin{align*}
E^{(m)} f (a_N)= \sum_{i=1}^n c_i (a_N+a_i)^{m+\frac{q-1}{d}}= \sum_{i=1}^n c_i (a_N+a_i)^{m}=\sum_{k=0}^m \binom{m}{k} a_N^{m-k} \bigg(\sum_{i=1}^n c_ia_i^k\bigg)=0.
\end{align*} 
Thus, $a_j$ is a root of $f$ with multiplicity at least $m$ for each $1 \leq j \leq n$, and $a_N$ is in fact a root of $f$ with multiplicity at least $m+1$. It follows that
$$
\frac{(N-1)^2+1}{2}=\frac{N(N-2)}{2}+1=mN+1\leq \deg f \leq \frac{q-1}{d}.
$$
Finally, we additionally assume that $0 \in A$, that is, $a_N=0$. Then we must have $a_i \in S_d$ and thus $a_i^{\frac{q-1}{d}}=1$ for each $1 \leq i \leq n$. A similar computation shows that
\begin{align*}
E^{(r)} f (0)
&=  \sum_{i=1}^n c_i \bigg(\sum_{k=0}^m E^{(r-k)}[(x+a_i)^{m+\frac{q-1}{d}}](0) \cdot E^{(k)}[(x-a_i)^{m}](0)\bigg)\\
&=  \sum_{k=0}^m \binom{m+\frac{q-1}{d}}{r-k} \binom{m}{k} \bigg(\sum_{i=1}^n c_i a_i^{m-r+k+\frac{q-1}{d}} (-a_i)^{m-k}\bigg)\\
&=  \sum_{k=0}^m \binom{m+\frac{q-1}{d}}{r-k} \binom{m}{k} (-1)^{m-k}\bigg(\sum_{i=1}^n c_i a_i^{2m-r+\frac{q-1}{d}}\bigg)\\
&=  \sum_{k=0}^m \binom{m+\frac{q-1}{d}}{r-k} \binom{m}{k} (-1)^{m-k}\bigg(\sum_{i=1}^n c_i a_i^{2m-r}\bigg)=0
\end{align*}
for each $m+1 \leq r \leq 2m$. Thus, in this case, $0=a_N$ is in fact a root of $f$ with multiplicity at least $2m+1=n=N-1$. Consequently, we have a stronger bound that
\begin{align*}
&\frac{N(N-1)}{2}+\#\{a \in A: 2a \in S_d\}\\
&=(N-1) \cdot \frac{(N-2)}{2}+ \#\{1\leq j \leq n: 2a \in S_d\}+(N-1)\leq \deg f \leq \frac{q-1}{d}.
\end{align*}
\end{proof}

In view of Theorem~\ref{thm:ub} and Theorem~\ref{thm:VLM}, we also need to consider the case that $A\hat{+}A \subset S_d \cup \{0\}$. Next, we prove that a slightly weaker bound still holds under this weaker assumption.

\begin{prop}\label{prop: allow0}
Let $d \geq 2$ and let $q \equiv 1 \pmod d$ be an odd prime power. Let $A' \subset \F_q$ such that $A'\hat{+}A' \subset S_d \cup \{0\}$ while $A'+A' \not \subset S_d \cup \{0\}$. Then $|A'|\leq \sqrt{2(q-1)/d+1}+2$. 
\end{prop}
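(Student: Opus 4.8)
The plan is to reduce Proposition~\ref{prop: allow0} to Proposition~\ref{prop:main} by removing a single point from $A'$. Since $A' + A' \not\subset S_d \cup \{0\}$, there exist $a_0, a_0' \in A'$ with $a_0 + a_0' \notin S_d \cup \{0\}$; in particular $a_0 + a_0' \ne 0$, so if $a_0 = a_0'$ we have $2a_0 \notin S_d \cup \{0\}$, and if $a_0 \ne a_0'$ then $a_0 + a_0' \in A' \hat{+} A' \subset S_d \cup \{0\}$ forces a contradiction — hence in fact $a_0 = a_0'$ and the ``bad'' pair must be a diagonal pair. So there is some $a^\ast \in A'$ with $2a^\ast \notin S_d \cup \{0\}$. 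Now set $A = A'$ if $0 \notin A'$, and otherwise the situation is already covered; the real issue is that $A' \hat{+} A'$ may genuinely contain $0$, i.e. there may be $a, b \in A'$ distinct with $a + b = 0$.

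First I would handle the case $0 \notin A' \hat{+} A'$, i.e. $A' \hat{+} A' \subset S_d$: then Proposition~\ref{prop:main} applies directly to $A'$ (we have verified $A' + A' \not\subset S_d \cup \{0\}$), and in either parity case it yields $\frac{(|A'|-1)^2+1}{2} \le \frac{q-1}{d}$ (the odd case gives the even stronger $\frac{|A'|(|A'|-1)}{2} \le \frac{q-1}{d}$), hence $|A'| \le \sqrt{2(q-1)/d - 1} + 1 \le \sqrt{2(q-1)/d+1}+2$. Next, suppose $0 \in A' \hat{+} A'$, so $A'$ contains a pair $\{a, -a\}$ with $a \ne 0$. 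Form $A = A' \setminus \{-a\}$, so $|A| = |A'| - 1$. Then $A \hat{+} A \subset A' \hat{+} A' \subset S_d \cup \{0\}$, but now I claim $0 \notin A \hat{+} A$: if $b, c \in A$ were distinct with $b + c = 0$, then $\{b,c\}$ and $\{a,-a\}$ would be two distinct zero-sum pairs in $A'$ — this is not immediately a contradiction, so the cleaner route is to pick $a$ so that $A'$ has a unique such pair, or more robustly, to iterate: keep deleting one element of each zero-sum pair until none remain. Each deletion removes one element and can only shrink the sumset, and the process terminates (finiteness) at a set $A$ with $A \hat{+} A \subset S_d$, $|A| \ge |A'| - t$ where $t$ is the number of deletions. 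Since distinct zero-sum pairs $\{a_i, -a_i\}$ are disjoint (an element lies in at most one such pair, as $x \mapsto -x$ is an involution), we have $|A| \ge |A'|/2$ if we delete one per pair — but that is too lossy. The sharper observation: I would instead argue that $A'$ can contain at most \emph{one} zero-sum pair. Indeed if $\{a,-a\}$ and $\{b,-b\}$ are two disjoint zero-sum pairs in $A'$ with $a \ne \pm b$, then $a + b$, $a - b \in A' \hat{+} A' \subset S_d \cup \{0\}$ and both are nonzero (since $a \ne \pm b$), so $a + b, a - b \in S_d$; but also $-a + b = -(a-b) \in S_d$, forcing $-1 \in S_d \cdot S_d^{-1} = S_d$... this needs care and is exactly where the delicate casework lives.

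The cleanest packaging, which I would adopt, is: let $A = A' \setminus \{z\}$ for a single well-chosen $z$. If $0 \in A' \hat{+} A'$, pick a zero-sum pair $\{a,-a\} \subset A'$ and one more point if needed so that after deletion no zero-sum pair survives; by the disjointness of zero-sum pairs and a short argument bounding their number by an absolute constant (or by just deleting from at most one pair after showing two disjoint pairs lead to $2a = (a+b)+(a-b)$ being a sum of two elements of $S_d$ while $2a \notin S_d\cup\{0\}$ for the bad point, producing the contradiction), we arrange $|A| \ge |A'| - 2$ with $A \hat{+} A \subset S_d$ and $A + A \not\subset S_d \cup \{0\}$ (the bad diagonal point $a^\ast$ can be retained, or if it was deleted, note $2a^\ast \notin S_d\cup\{0\}$ still gives a bad pair unless $A$ becomes too small, which only happens for $|A'|$ bounded by a constant where the inequality is trivial). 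Then Proposition~\ref{prop:main} gives $\frac{(|A|-1)^2+1}{2} \le \frac{q-1}{d}$, so $|A| \le \sqrt{2(q-1)/d - 1} + 1$, whence $|A'| \le \sqrt{2(q-1)/d+1} + 2$ — checking the case parity and the ``$0 \in A$'' refinements of Proposition~\ref{prop:main} only help.

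The main obstacle I anticipate is the bookkeeping around zero-sum pairs and the bad diagonal point: ensuring that after deleting the one or two points needed to kill $0 \in A \hat{+} A$, the hypothesis $A + A \not\subset S_d \cup \{0\}$ of Proposition~\ref{prop:main} still holds, and that the loss in cardinality is at most the constant $2$ that appears in the claimed bound (any larger loss would be fatal). I expect this is handled by first ruling out two disjoint zero-sum pairs coexisting with a bad diagonal point via a direct $S_d$-membership contradiction (using $2a^\ast = (a^\ast+a) + (a^\ast-a)$ type identities, or rather that the bad point, the zero-sum pair, and closure of $A'\hat+A'$ in $S_d\cup\{0\}$ overconstrain things), reducing to the case of at most one zero-sum pair, and then the single deletion $A = A'\setminus\{-a\}$ suffices with the $+2$ slack absorbing both the removed element and the gap between $\sqrt{2(q-1)/d-1}+1$ and $\sqrt{2(q-1)/d+1}+2$.
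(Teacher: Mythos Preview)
Your reduction in the case $0\notin A'\hat+A'$ is fine: Proposition~\ref{prop:main} applies directly and gives the bound. The gap is entirely in the case $0\in A'\hat+A'$. There you need to delete elements until no zero-sum pair remains, and you claim the number of deletions is at most two by ``ruling out two disjoint zero-sum pairs coexisting with a bad diagonal point via a direct $S_d$-membership contradiction.'' But the identity you point to, $2a^\ast=(a^\ast+a)+(a^\ast-a)$, only expresses $2a^\ast$ as a \emph{sum} of two elements of $S_d\cup\{0\}$; it says nothing about $2a^\ast$ lying in $S_d$, so no contradiction follows. Likewise, from two zero-sum pairs $\{a,-a\},\{b,-b\}$ you get $a\pm b\in S_d$ and hence $-1\in S_d$ (since $-(a-b)\in S_d$ too), but this is perfectly consistent with the hypotheses and does not bound the number of such pairs. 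Nothing you have written prevents $A'$ from containing many zero-sum pairs, in which case the deletion step loses far more than two elements and the final inequality fails.

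The paper avoids this entirely. It deletes only the single element $-a^\ast$ (if present), so $|A|\ge|A'|-1$, and does \emph{not} try to force $A\hat+A\subset S_d$. Instead it reruns the Stepanov argument of Proposition~\ref{prop:main} using the weaker identity
\[
(a_i+a_j)^{\frac{q-1}{d}+1}(a_j-a_i)=(a_i+a_j)(a_j-a_i),
\]
which is valid for all $i,j$ even when $a_i+a_j=0$. This still yields multiplicity $m$ at every $a_j$. The one point $a_1=a^\ast$ now has $-a_1\notin A$, so $a_i+a_1\neq 0$ for all $i$; at this single point the original identity $(a_i+a_1)^{(q-1)/d}(a_1-a_i)=a_1-a_i$ holds and the computation of $E^{(m)}f(a_1)$ goes through to give $c_1(2a_1)^m\big((2a_1)^{(q-1)/d}-1\big)\neq 0$, whence $f\not\equiv 0$ and $N(N-2)/2\le(q-1)/d$. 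The point is that you do not need to kill \emph{all} zero-sum pairs; you only need \emph{one} element whose negative is absent, and the bad diagonal point provides exactly that after a single deletion.
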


\begin{proof}
Since $A'\hat{+}A' \subset S_d \cup \{0\}$ and $A'+A' \not \subset S_d \cup \{0\}$, there is $a^* \in A'$ such that  $2a^* \not \subset S_d \cup \{0\}$. Note that $a^* \neq 0$ and thus $-a^* \neq a^*$. Let $A=A' \setminus \{-a^*\}$. Then $|A|=|A'|$ or $|A|=|A'|-1$. Thus, it suffices to show $|A|\leq \sqrt{2(q-1)/d+1}+1$. 

Let $A=\{a_1,a_2, \ldots, a_N\}$. We can assume that $|A|\geq 2$. Without loss of generality, we may assume that $a_1=a^*$. The proof is essentially the same as the proof of Proposition~\ref{prop:main}. We use the same notations and use the same polynomial. While equation~\eqref{eq:obs} may fail, we instead have 
$$
(a_i+a_j)^{\frac{q-1}{d}+1}(a_j-a_i)=(a_j+a_i)(a_j-a_i)
$$
for each $1 \leq i, j \leq N$. Almost identical arguments lead to the following information on $f$:
\begin{itemize}
    \item $\deg f \leq \frac{q-1}{d}$.
    \item $f(a_j)=0$ for each $1 \leq j \leq N$.
    \item a slightly weaker claim, namely
\begin{equation}
\sum_{i=1}^N c_i E^{(k_1)}[(x+a_i)^{m+\frac{q-1}{d}}](a_j) \cdot E^{(k_2)}[(x-a_i)^{m}](a_j)=0.
\end{equation}
holds for each $1 \leq j \leq N$, provided that $0 \leq k_1,k_2<m$ and $1 \leq k_1+k_2 \leq m$. Compared to the claim in the proof of Proposition~\ref{prop:main}, we also need to deal with the cases $k_1=m$ and $k_2=0$ separately. 
\item Lemma~\ref{Leibniz} then allows us to deduce that $E^{(r)} f (a_j)=0$ for $1\leq j \leq N$ and $0 \leq r \leq m-1$.
\end{itemize}

Since $-a_1=-a^* \notin A$, it follows that $a_i+a_1 \neq 0$ for each $1 \leq i \leq N$, so that we have
$$
(a_i+a_1)^{\frac{q-1}{d}}(a_1-a_i)=a_1-a_i.
$$
Thus, following a similar computation as in the deduction of equation~\eqref{Neven} and equation~\eqref{m+1} in the proof of Proposition~\ref{prop:main}, we have
\begin{align*}
E^{(m)} f (a_1)
&=  \sum_{i=1}^n c_i \bigg(\sum_{k=0}^m E^{(k)}[(x+a_i)^{m+\frac{q-1}{d}}](a_1) \cdot E^{(m-k)}[(x-a_i)^{m}](a_1)\bigg) \\
&= \sum_{i=1}^n c_i E^{(0)}[(x+a_i)^{m+\frac{q-1}{d}}](a_1) \cdot E^{(m)}[(x-a_i)^{m}](a_1)\\
&+\sum_{i=1}^n c_i E^{(m)}[(x+a_i)^{m+\frac{q-1}{d}}](a_1) \cdot E^{(0)}[(x-a_i)^{m}](a_1)\\
&= \sum_{i=1}^n c_i (a_1+a_i)^{m+\frac{q-1}{d}} +\binom{m+\frac{q-1}{d}}{m} \sum_{i=1}^{n} c_i (a_1+a_i)^{\frac{q-1}{d}}(a_1-a_i)^m\\
&= \sum_{i=1}^n c_i (a_1+a_i)^{m+\frac{q-1}{d}} +\binom{m+\frac{q-1}{d}}{m} \sum_{i=1}^{n} c_i (a_1-a_i)^m\\
&= c_1 (2a_1)^m \bigg((2a_1)^{\frac{q-1}{d}}-1\bigg) \neq 0
\end{align*}
since $2a_1 \not \in S_d \cup \{0\}$. In particular, $f$ is not identically zero. Therefore, Lemma~\ref{lem:multiplicity} implies that
$$
\frac{N(N-2)}{2}\leq Nm \leq \deg f \leq \frac{q-1}{d}.
$$
It follows that $N \leq \sqrt{2(q-1)/d+1}+1$ and the proof is complete.
\end{proof}

\subsection{Proof of Theorem~\ref{thm:ub} and Theorem~\ref{thm:VLM}, and their implications to Cayley sum graphs}\label{subsec:Cayleysum}

We first deduce Theorem~\ref{thm:ub}.

\begin{proof} [Proof of Theorem~\ref{thm:ub}]
    If $A+A \subset S_d \cup \{0\}$, then Theorem~\ref{thm:sumset} implies that $|A|^2 \leq \frac{p-1}{d}+|A|$, so $|A|\leq \sqrt{p/d}+1$. Next assume that $A +A \not \subset S_d \cup \{0\}$. Then the upper bound on $|A|$ follows from Proposition~\ref{prop:main} and Proposition~\ref{prop: allow0}.
\end{proof}

Before proving Theorem~\ref{thm:VLM}, we recall a few basic terminologies from graph theory. A {\em clique} in a graph $X$ is a subset of vertices in which every two distinct vertices are adjacent, and the {\em clique number} of $X$, denoted $\omega(X)$, is the size of a maximum clique. The graphs related to our discussions are the widely studied generalized Paley graphs.

We follow the notations in \cite{AY22, Y22}. Let $d \geq 2$ and let $q \equiv 1 \pmod {2d}$ be a prime power. The {\em $d$-Paley graph} over $\F_q$, denoted $GP(q,d)$, is the graph whose vertices are the elements of $\F_q$, where two vertices are adjacent if and only if the difference of the two vertices is a $d$-th power in $\F_q^*$. The condition $q \equiv 1 \pmod {2d}$ guarantees that $GP(q,d)$ is undirected and non-degenerate (see for example \cite[page 1]{Y22}). Now we are ready to prove Theorem~\ref{thm:VLM} with the help of existing results on $d$-Paley graphs.

\begin{proof}[Proof of Theorem~\ref{thm:VLM}]
When $q \leq 121$, we have checked the theorem by SageMath. Next, assume that $q>121$. Let $A \subset \F_q$ such that $A \hat{+} A \subset S_d \cup \{0\}$.

If $A+A \not \subset S_d \cup \{0\}$, then Proposition~\ref{prop: allow0} implies that $|A|\leq \sqrt{\frac{2(q-1)}{d}+1}+2$. Note that when $q>121$, we have
$$
|A| \leq \sqrt{\frac{2(q-1)}{d}+1}+2 \leq \sqrt{\frac{2(q-1)}{3}+1}+2<\sqrt{q}
$$
since $d \geq 3$. Next we consider the case that $A+A \subset S_d \cup \{0\}$. By Lemma~\ref{lem: trivial}, $|A|\leq \sqrt{q}$, with equality holding only if $A=-A$. Thus, we have proved the first assertion that $|A|\leq \sqrt{q}$. 

It remains to characterize $A$ such that $|A|=\sqrt{q}$ and $A+A \subset S_d \cup \{0\}$.  The above analysis shows that we must have $A=-A$ and $0 \in A$ (since $q$ is odd). Thus $A-A=A+A \subset S_d \cup \{0\}$. Since $A-A=-(A-A)$, we must have $-1 \in S_d$. Since $q \equiv 1 \pmod d$, this implies that $q \equiv 1 \pmod {2d}$, so that the $d$-Paley graph over $\F_q$, is well-defined. Using this terminology, $A$ is a clique in $GP(q,d)$ with size $\sqrt{q}$. It then follows from \cite[Theorem 1.2]{Y22} that $d \mid (\sqrt{q}+1)$. Note that the condition $d \mid (\sqrt{q}+1)$ implies that $\F_{\sqrt{q}}^* \subset S_d$.  Moreover, Sziklai \cite{Szi99} showed that if $0,1 \in A$, then $A=\F_{\sqrt{q}}$; see also \cite[Theorem 2.9]{Y22} and \cite[Section 2]{AY22}. Note that $0 \in A$ while $1$ is not necessarily in $A$, so we conclude that $A=\alpha \F_{\sqrt{q}}$ for some $\alpha \in S_d$. Conversely, it it easy to verify that, if $\alpha \in S_d$, then $\alpha \F_{\sqrt{q}} \hat{+} \alpha \F_{\sqrt{q}}=\alpha \F_{\sqrt{q}} \subset S_d \cup \{0\}$. This completes the proof of the theorem.
\end{proof}

\begin{rem}
Our analysis cannot handle the case $d=2$. In fact, there is $A \subset \F_9$ with size $4$ such that $A \hat{+} A \subset S_2 (\F_9) \cup \{0\}$, and there are $15$ different $A \subset \F_{25}$ with size $5$ such that $A \hat{+} A \subset S_2 (\F_{25}) \cup \{0\}$. Nevertheless, we conjecture that when $q$ is an odd square and $q \geq 49$, the statement of Theorem~\ref{thm:VLM} also extends to the case $d=2$. 
\end{rem}

We have seen the connection between our results and cliques in generalized Paley graphs from the above proof. Next, we further explore such a connection and reformulate our results using the language of graphs. We need to recall the notions of Cayley graphs and Cayley sum graphs.

Let $G$ be an abelian group and let $D \subset G$. The {\em Cayley graph} $\operatorname{Cay}(G, D)$  (typically one needs to further assume that $0 \not \in D$ and $D=-D$) is the undirected graph whose vertices are elements of $G$, such that two vertices $g$ and $h$ are adjacent if and only if $g-h \in D$. One can similarly define Cayley sum graphs: the {\em Cayley sum graph} $\operatorname{CayS}(G, D)$ is the undirected graph whose vertices are elements of $G$, such that two (distinct) vertices $g$ and $h$ are adjacent if and only if $g+h \in D$. In particular, note that a subset $A$ of vertices is a clique in a Cayley sum $X=\operatorname{CayS}(G, D)$ if and only if $A\hat{+} A \subset D$, thus cliques in $X$ are closely connected to restricted sumsets.  

Let $d \geq 2$ and $q \equiv 1 \pmod d$ be an odd prime power. We define the \emph{$d$-Paley sum graph} over $\F_q$, denoted $GPS(q,d)$, to be a graph with the vertex set being $\F_q$, and two vertices $x$ and $y$ are adjacent if $x+y$ is a $d$-th power in $\F_q^*$ or $x+y=0$. Equivalently, $GPS(q,d)=CayS(\F_q, S_d \cup \{0\})$. Now we are ready to equivalently reformulate Theorem~\ref{thm:ub} and Theorem~\ref{thm:VLM} using the graph theoretical language:
\begin{cor}
Let $d \geq 2$ and let $p \equiv 1 \pmod d$ be a prime. Then $$\omega(GPS(p,d))\leq \sqrt{2(p-1)/d+1}+2.$$ 
\end{cor}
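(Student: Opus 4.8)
The plan is to observe that this corollary is an immediate reformulation of Theorem~\ref{thm:ub} once we unwind the definition of the $d$-Paley sum graph. Recall that $GPS(p,d) = \operatorname{CayS}(\F_p, S_d \cup \{0\})$, and that a subset $A$ of vertices of a Cayley sum graph $\operatorname{CayS}(G,D)$ forms a clique precisely when $A \hat{+} A \subset D$. Hence a clique $A$ in $GPS(p,d)$ is exactly a set $A \subset \F_p$ with $A \hat{+} A \subset S_d \cup \{0\}$. So the clique number $\omega(GPS(p,d))$ equals the maximum size of such a set $A$.

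First I would make this translation explicit: let $A \subset \F_p$ be a maximum clique in $GPS(p,d)$, so $|A| = \omega(GPS(p,d))$ and $A \hat{+} A \subset S_d \cup \{0\}$. Then I would invoke the second bound in Theorem~\ref{thm:ub}, which states exactly that under the hypothesis $A \hat{+} A \subset S_d \cup \{0\}$ (with $d \geq 2$ and $p \equiv 1 \pmod d$ prime), we have $|A| \leq \sqrt{2(p-1)/d + 1} + 2$. Combining these two facts gives $\omega(GPS(p,d)) \leq \sqrt{2(p-1)/d+1}+2$, which is the claim.

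The proof is therefore essentially a one-line deduction, and there is no genuine obstacle: all the work has already been done in establishing Proposition~\ref{prop:main}, Proposition~\ref{prop: allow0}, and hence Theorem~\ref{thm:ub}. The only point requiring a small amount of care is the degenerate case of a clique of size $0$ or $1$, where the bound holds trivially, and the implicit check that $GPS(p,d)$ is a well-defined graph in the sense that $0 \in S_d \cup \{0\}$ makes every vertex adjacent to its negative; but none of this affects the stated inequality.

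\begin{proof}
By definition, $GPS(p,d) = \operatorname{CayS}(\F_p, S_d \cup \{0\})$, so a subset $A \subset \F_p$ is a clique in $GPS(p,d)$ if and only if $A \hat{+} A \subset S_d \cup \{0\}$. Let $A$ be a maximum clique, so that $|A| = \omega(GPS(p,d))$ and $A \hat{+} A \subset S_d \cup \{0\}$. By Theorem~\ref{thm:ub}, this implies $|A| \leq \sqrt{2(p-1)/d+1}+2$. Hence $\omega(GPS(p,d)) \leq \sqrt{2(p-1)/d+1}+2$.
\end{proof}
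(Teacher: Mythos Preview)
Your proof is correct and matches the paper's approach exactly: the paper presents this corollary simply as a graph-theoretic reformulation of Theorem~\ref{thm:ub} and does not even write out a separate proof. Your unpacking of the definition of $GPS(p,d)$ and the clique condition $A\hat{+}A \subset S_d \cup \{0\}$ is precisely the intended argument.
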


\begin{cor}\label{cor:VLM}
Let $d \geq 3$. Let $q \equiv 1 \pmod d$ be an odd prime power and a square. If $d \nmid (\sqrt{q}+1)$, then $\omega(GPS(q,d))\leq \sqrt{q}-1$. If $d \mid (\sqrt{q}+1)$, then $\omega(GPS(q,d))= \sqrt{q}$ and the each maximum clique if of the form $\alpha \F_{\sqrt{q}}$, where $\alpha \in S_d$.
\end{cor}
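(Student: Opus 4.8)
The plan is to obtain Corollary~\ref{cor:VLM} as a direct graph-theoretic translation of Theorem~\ref{thm:VLM}. The key dictionary, already noted above, is that a subset $A\subseteq\F_q$ is a clique in $GPS(q,d)=\operatorname{CayS}(\F_q,S_d\cup\{0\})$ if and only if $A\hat{+}A\subseteq S_d\cup\{0\}$; hence $\omega(GPS(q,d))$ equals the maximum size of a set $A\subseteq\F_q$ with $A\hat{+}A\subseteq S_d\cup\{0\}$. I will also use throughout that $\sqrt q$ is a positive integer, which holds because $q$ is a perfect square.

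First I would dispose of the upper bound. By Theorem~\ref{thm:VLM}, every clique $A$ of $GPS(q,d)$ satisfies $|A|\leq\sqrt q$, and the same theorem says that equality forces $d\mid(\sqrt q+1)$. So if $d\nmid(\sqrt q+1)$, then no clique has size $\sqrt q$; since clique sizes are integers bounded above by the integer $\sqrt q$, every clique has size at most $\sqrt q-1$, i.e.\ $\omega(GPS(q,d))\leq\sqrt q-1$, which is the first assertion.

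Now suppose $d\mid(\sqrt q+1)$. I would first produce a clique of size $\sqrt q$: take $A=\alpha\F_{\sqrt q}$ with $\alpha\in S_d$. As verified at the end of the proof of Theorem~\ref{thm:VLM}, the hypothesis $d\mid(\sqrt q+1)$ gives $\F_{\sqrt q}^{*}\subseteq S_d$, whence $\alpha\F_{\sqrt q}\hat{+}\alpha\F_{\sqrt q}=\alpha\F_{\sqrt q}\subseteq S_d\cup\{0\}$; thus $A$ is a clique and $\omega(GPS(q,d))\geq\sqrt q$. Combined with the bound $|A|\leq\sqrt q$ from Theorem~\ref{thm:VLM}, this gives $\omega(GPS(q,d))=\sqrt q$. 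Finally, the equality clause of Theorem~\ref{thm:VLM} states that any $A$ with $A\hat{+}A\subseteq S_d\cup\{0\}$ and $|A|=\sqrt q$ is necessarily of the form $\alpha\F_{\sqrt q}$ with $\alpha\in S_d$; in graph language, every maximum clique of $GPS(q,d)$ has this form, completing the proof.

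There is essentially no obstacle in this corollary: the entire difficulty lies in Theorem~\ref{thm:VLM} (and, beneath it, in Proposition~\ref{prop: allow0}, Lemma~\ref{lem: trivial}, and the van Lint--MacWilliams-type classification of maximum cliques in $d$-Paley graphs that it invokes). The only two points needing a word of care are the integrality deduction that ``$|A|\leq\sqrt q$ together with $|A|\neq\sqrt q$ implies $|A|\leq\sqrt q-1$'', and the (already recorded) elementary check that the cosets $\alpha\F_{\sqrt q}$ with $\alpha\in S_d$ are genuinely cliques.
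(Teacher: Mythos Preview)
Your proof is correct and matches the paper's approach exactly: the paper presents Corollary~\ref{cor:VLM} simply as the graph-theoretic reformulation of Theorem~\ref{thm:VLM}, without a separate proof, and your argument spells out precisely that translation via the clique--restricted-sumset dictionary.
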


\begin{rem}\label{rem:EKR}
The original conjecture due to van Lint and MacWilliams \cite{vLM78} is often formulated as the classification of maximum cliques in Paley graphs of square order. Its generalizations can be similarly viewed as the classification of maximum cliques in special Cayley graphs (such as generalized Paley graphs, Peisert graphs, and Peisert-type graphs) and a key ingredient for such generalizations is to view such graphs geometrically \cite{AY22, Blo84, Szi99}. The conjecture and its generalization are also known as the Erd{\H{o}}s-{K}o-{R}ado (EKR) theorem in these graphs, in the sense that each maximum clique in these graphs is a canonical clique, that is, a clique with a subfield structure, which corresponds to a line in the affine Galois plane. We refer to more discussions in \cite[Section 5.9]{GM15} and \cite{AY22}.

Corollary~\ref{cor:VLM} can be viewed as the EKR theorem for $d$-Paley sum graphs. If $d \mid (\sqrt{q}+1)$, then it is obvious that $\alpha \F_{\sqrt{q}}$ are maximum cliques (these cliques can be viewed as canonical cliques) and Corollary~\ref{cor:VLM} implies there is no maximum non-canonical clique, which is reminiscent of the classical EKR theorem \cite{EKR}. Usually, Cayley sum graphs are much more difficult to study compared to Cayley graphs. Karen Meagher \footnote{private communication} remarked that proving EKR results for Cayley sum graphs requires different tools other than those known algebraic approaches for proving EKR results for Cayley graphs collected in her book joint with Godsil~\cite{GM15} mainly because Cayley sum graphs lose vertex-transitivity. She also pointed out that Corollary~\ref{cor:VLM} appears to be the first instance of EKR results in a family of Cayley sum graphs.
\end{rem}

\section{Applications to restricted sumset decompositions}\label{sec4}

Recall that our goal is to analyze whether a multiplicative subgroup $S_d$ can admit a restricted sumset decomposition, that is, whether there is $A \subset \F_q$ such that $A \hat{+} A=S_d$. In this section, we apply Proposition~\ref{prop:main} and various additional ingredients to prove our main results. We first prove Theorem~\ref{thm: Sidon} and deduce Theorem~\ref{thm:square} in Section~\ref{sec:Sidon}. We then introduce more tools and prove Theorem Theorem~\ref{thm:main} and Theorem~\ref{thm:density}.

\subsection{The condition on sumset in Proposition~\ref{prop:main}}  To apply Proposition~\ref{prop:main}, we need to analyze the possibility of $A+A=S_d \cup \{0\}$ or $A+A=S_d$. The following proposition gives a sufficient condition for ruling out this possibility.
\begin{prop}\label{prop: A+AnotS_d}
Let $d \geq 2$ and let $p \equiv 1 \pmod d$ be a prime. Let $q$ be a power of $p$ such that $\frac{q-1}{d}\geq 3$. Then for any $A \subset \F_q$, $A+A \neq S_d$ and $A+A \neq S_d \cup \{0\}$.
\end{prop}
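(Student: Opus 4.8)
The plan is to rule out the two equalities $A+A = S_d$ and $A+A = S_d \cup \{0\}$ by a counting argument combined with the structural upper bounds already established, using the key fact that $S_d$ is ``multiplicatively rigid'' and therefore cannot be an unrestricted sumset except in degenerate small cases. First I would dispose of the case $A+A = S_d \cup \{0\}$: this forces $0 \in A+A$, hence $-a \in A$ whenever $a \in A$, so $A = -A$; then $A+A = A - A$ is symmetric and must equal $S_d \cup \{0\}$, which is symmetric only if $-1 \in S_d$, i.e. $q \equiv 1 \pmod{2d}$. In that situation $A$ is a clique of size $|A|$ in the generalized Paley graph $GP(q,d)$, and since $A+A \supseteq S_d$ we need $|A|$ large — comparing $|A+A| \le \binom{|A|+1}{2}$ (valid since $A=-A$ contains $0$, so in fact $|A+A| \le \binom{|A|}{2} + |A|$, or one argues more carefully) against $|S_d \cup \{0\}| = \frac{q-1}{d}+1$ shows $|A| \gtrsim \sqrt{2(q-1)/d}$, while the double character sum bound of Lemma~\ref{lem: trivial} gives $|A| \le \sqrt{q}$. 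For $d \ge 2$ these are not yet contradictory, so I would instead invoke Theorem~\ref{thm:sumset} with $B = A$: since $A + A \subset S_d \cup \{0\}$ and (when $q=p$, or more generally when the binomial coefficient condition holds) we get $|A|^2 \le \frac{q-1}{d} + |A \cap (-A)| = \frac{q-1}{d} + |A|$, whereas $A+A = S_d \cup \{0\}$ forces $|A|^2 \ge |A+A| = \frac{q-1}{d}+1$; combining, $|A| \ge \frac{q-1}{d}/(|A|-1) $... this is delicate, so the cleaner route is to note $A + A = S_d \cup \{0\}$ has size exactly $\frac{q-1}{d}+1$ and $A+A \subseteq S_d \cup \{0\}$ already, then use that equality in Theorem~\ref{thm:sumset} (or its proof) cannot hold because the polynomial constructed there would need too low a degree — I would check whether Theorem~\ref{thm:sumset} is even stated with the binomial hypothesis and handle the general prime-power case by passing through an auxiliary argument.

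A cleaner unified approach, which I would actually pursue: suppose $A + A = S_d$ or $A+A = S_d \cup \{0\}$. In either case $A + A \subseteq S_d \cup \{0\}$, so by Lemma~\ref{lem: trivial} we have $|A| \le \sqrt{q}$, and in the boundary case $|A| = \sqrt{q}$ we get $A = -A$. On the other hand $A+A \supseteq S_d$ forces $|A+A| \ge \frac{q-1}{d}$, and since $|A+A| \le |A|^2$ we get $|A| \ge \sqrt{(q-1)/d}$. Now I use a multiplicative symmetry: $S_d$ is closed under multiplication by any $t \in S_d$, but $A+A$ being closed under such scalings would force $tA + tA = A + A$ for all $t \in S_d$, i.e. $S_d$ acts on the (finite) family of ``sum-representations,'' which combined with $|A| \le \sqrt q$ and a pigeonhole/orbit-counting argument on pairs $\{a,a'\}$ with $a + a'$ in a fixed coset pins down $|A|$ and ultimately contradicts $\frac{q-1}{d} \ge 3$. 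The cleanest instance: if $A+A=S_d$, then $0\notin A+A$, so $A\cap(-A)=\emptyset$, but also $-a-a' = -(a+a')\in -S_d$; if $-1\in S_d$ then $-S_d = S_d$ and $(-A)+(-A)=S_d=A+A$, and one shows $-A$ and $A$ must then coincide or be disjoint translates, forcing $2|A| \le \sqrt q$ roughly, contradicting $|A|\ge\sqrt{(q-1)/d}$ once $d$ is small; if $-1\notin S_d$ one gets an even easier contradiction since then $S_d \cap (-S_d) = \emptyset$ yet $A+A$ and $(-A)+(-A)=-(A+A)$ are disjoint, no contradiction directly — so here I fall back on Theorem~\ref{thm:sumset}.

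Given the structure of the paper, I believe the intended proof simply chains together the already-proved pieces: apply Theorem~\ref{thm:sumset} (with $B=A$, noting $A+A\subseteq S_d\cup\{0\}$ so the hypothesis applies, and handling the binomial-coefficient condition — for $q=p$ it's automatic, and for general $q$ one restricts attention to what Proposition~\ref{prop: A+AnotS_d} actually needs) to obtain $|A|^2 \le \frac{q-1}{d} + |A\cap(-A)|$; then observe that $A+A = S_d$ forces $|A\cap(-A)| = 0$ (since $0 \notin S_d$), giving $|A|^2 \le \frac{q-1}{d}$, while $A+A=S_d$ also forces $|A+A| = \frac{q-1}{d} \le |A|^2$, hence $|A|^2 = \frac{q-1}{d}$ and every sum $a+a'$ with $a,a'\in A$ is attained exactly once — i.e. $A$ is a perfect difference-type configuration — and then a short direct argument (e.g. considering $a+a = 2a$, which must be a distinct element of $S_d$ for each $a$, versus cross sums) yields $\frac{q-1}{d} < 3$, a contradiction; the case $A+A = S_d\cup\{0\}$ forces $A=-A$ and $0\in A$, so $|A\cap(-A)| = |A|$ and $|A|^2 - |A| \le \frac{q-1}{d}$ while $|A|^2 \ge |A+A| = \frac{q-1}{d}+1$, giving $|A| \le 1$... but then $\frac{q-1}{d}+1 \le 1$, contradiction. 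The main obstacle I anticipate is the binomial-coefficient hypothesis in Theorem~\ref{thm:sumset}: making the argument work for all prime powers $q$ (not just primes) without that hypothesis may require either a separate Stepanov-type computation analogous to Proposition~\ref{prop:main} but for unrestricted sums with $0$ allowed, or a clever reduction; I would spend most of the effort there, and I suspect the actual proof either invokes a variant of Proposition~\ref{prop:main} directly or uses the equality-case analysis of Lemma~\ref{lem: trivial} together with the fact that $|A+A|$ being as large as $\frac{q-1}{d}$ while $|A|\le\sqrt q$ already forces near-equality, whence exact-covering rigidity finishes the job.
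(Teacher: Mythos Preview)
Your overall strategy --- compare the Hanson--Petridis-type upper bound from Theorem~\ref{thm:sumset} against the trivial lower bound $|A+A|\le \binom{|A|+1}{2}$ --- is the right skeleton, and in fact the paper's proof follows exactly this plan. But you have correctly located, and then failed to cross, the main bridge: the binomial-coefficient hypothesis in Theorem~\ref{thm:sumset}. You cannot simply take $B=A$; for general prime powers $q$ there is no reason for $\binom{|A|-1+\frac{q-1}{d}}{\frac{q-1}{d}}\not\equiv 0\pmod p$ to hold, and indeed it often fails. The paper's proof does \emph{not} take $B=A$. It constructs a subset $B\subset A$ with $|B|>\tfrac{|A|+1}{2}$ and with the binomial condition satisfied, by choosing $|B|-1$ to have a specific base-$p$ expansion (checked via Kummer's theorem): roughly, if $|A|-1=(c_k,\ldots,c_0)_p$ with $c_k\le p-1-\tfrac{p-1}{d}$ one takes $|B|-1=(c_k,0,\ldots,0)_p$, and otherwise $|B|-1=(\tfrac{(d-1)(p-1)}{d},\ldots,\tfrac{(d-1)(p-1)}{d})_p$. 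Combining $|A||B|\le\frac{q-1}{d}+|A\cap(-B)|$ with the refined counting bound $\frac{q-1}{d}\le\frac{|A|^2+|A|-|A\cap(-A)|}{2}$ then forces $B=A$ and $A=-A$, whence $|A|\le 2$. There is also a residual case ($d=2$, $|A|=p^{k+1}$) where the digit construction degenerates, and the paper handles it by invoking the van Lint--MacWilliams/Blokhuis classification of maximum cliques in Paley graphs. None of this machinery appears in your proposal.

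Two smaller points. First, your arithmetic in the $A+A=S_d\cup\{0\}$ case is off: from $|A|^2-|A|\le\frac{q-1}{d}$ and $\frac{q-1}{d}+1\le|A|^2$ you only get $|A|\ge 1$, not $|A|\le 1$; you need the sharper bound $\frac{q-1}{d}+1\le\binom{|A|+1}{2}$ (which you mentioned earlier but then abandoned) to squeeze out $|A|\le 2$. Second, the paper dispatches the case $A+A=S_d$ by citing an earlier paper rather than re-deriving it, so the new content here is entirely the $S_d\cup\{0\}$ case and its handling of the binomial obstruction.
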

\begin{proof}
The statement $A+A \neq S_d$ has already been proved in \cite[Corollary 1.3]{Y24}.
Suppose otherwise that $A+A=S_d \cup \{0\}$. In this case, it is more difficult to derive a contradiction. In the following, we use a similar argument as in the proof of \cite[Theorem 1.2]{Y24} together with a few new ingredients.

For each $x \in A+A$, let $r(x)$ be the number of pairs $(a,a')$ such that $a,a' \in A$ and $a+a'=x$. Let $\beta$ be the number of $x \in (A+A)\setminus\{0\}$ such that $r(x)=1$. Observe the following:
\begin{itemize}
    \item $r(0)=|A \cap (-A)|$. If $a+a'=0$, then $a'=-a \in A$ and thus $a \in A \cap (-A)$. 
    \item $\beta \leq |A|$. Indeed, if $r(x)=1$, then there exist $a \in A$ such that $x=a+a$. Otherwise, if there are $a',a'' \in A$ such that $a' \neq a''$ and $a'+a''=x$, then $r(x) \geq 2$ since we also have $a''+a'=x$. 
\end{itemize}
It follows that
\begin{align*}
|A|^2&=\sum_{x \in A+A} r(x)\geq r(0)+\beta+2(|(A+A) \setminus \{0\}|-\beta) \\
&= |A \cap (-A)|+2|(A+A) \setminus \{0\}|-\beta \geq |A \cap (-A)|+2|(A+A) \setminus \{0\}|-|A|.
\end{align*}
Therefore, 
\begin{equation}\label{eq1}
 \frac{q-1}{d}=|S_d|=|(A+A) \setminus \{0\}|\leq \frac{|A|^2+|A|-|A \cap (-A)|}{2}.   
\end{equation}

Suppose $B$ is a subset of $A$ such that 
\begin{equation}\label{eqB}
|B|>\frac{|A|+1}{2} \quad \text{ and } \quad \binom{|B|-1+\frac{q-1}{d}}{\frac{q-1}{d}}\not \equiv 0 \pmod p.
\end{equation}
Note that $A+B \subset A+A=S_d \cup \{0\}$. Thus, Theorem~\ref{thm:sumset} implies that
\begin{equation}\label{eq2}
|A||B|\leq \frac{q-1}{d}+|A \cap (-B)|.
\end{equation}
Adding up equation~\eqref{eq1} and equation~\eqref{eq2} and simplifying, we obtain that
$$
|A|B| \leq \frac{|A|^2+|A|-|A \cap (-A)|}{2} +|A \cap (-B)|\leq \frac{|A|^2+|A|+|A \cap (-A)|}{2},
$$
with equality holding only if $|A \cap (-A)|=|A \cap (-B)|$. On the other hand, since $|B|>\frac{|A|+1}{2}$, we have
$$
|A|B|\geq \frac{|A|(|A|+2)}{2}\geq \frac{|A|^2+|A|+|A \cap (-A)|}{2}
$$
with equality holding only if $|A|=|A \cap (-A)|$, that is, $-A=A$. By comparing the above two estimates, we deduce that $A=B$ and $-A=A$. Thus equation~\eqref{eq2} implies that
$$
|A^2| \leq \frac{q-1}{d}+|A|
$$
while equation~\eqref{eq1} implies that
$$
\frac{q-1}{d} \leq \frac{|A|^2}{2},
$$
It follows that $|A| \leq 2$ and thus $|S_d|=|(A+A) \setminus \{0\}| \leq 2$, contradicting the assumption that $|S_d|=\frac{q-1}{d} \geq 3$.

It remains to construct a subset $B$ of $A$ with the property~\eqref{eqB}. Write $|A|-1=(c_k, c_{k-1}, \ldots, c_1,c_0)_p$ in base-$p$, that is, $|A|-1=\sum_{i=0}^k c_ip^i$ with $0 \leq c_i \leq p-1$ for each $0 \leq i \leq k$ and $c_k \geq 1$. Next, we construct $B$ according to the size of $c_k$.

(1) $c_k \leq p-1-\frac{p-1}{d}$. In this case, let $B$ be an arbitrary subset of $A$ with $|B|-1=(c_k,0, \ldots, 0)_p$, that is, $|B|=c_kp^k+1$. It is easy to verify that $\binom{|B|-1+\frac{q-1}{d}}{\frac{q-1}{d}} \not \equiv 0 \pmod p$ using Kummer's theorem. Since $|A| \leq (c_k+1)p^k$, we also have that $2|B|>|A|+1$.

(2) $c_k > p-1-\frac{p-1}{d}$. In this case, let $B$ be an arbitrary subset of $A$ with $$|B|-1=\bigg(\frac{(d-1)(p-1)}{d},\frac{(d-1)(p-1)}{d}, \ldots, \frac{(d-1)(p-1)}{d}\bigg)_p,$$ that is, $|B|=\frac{(d-1)(p-1)}{d} \cdot \sum_{i=0}^k p^i +1$. Again, it is easy to verify that $\binom{|B|-1+\frac{q-1}{d}}{\frac{q-1}{d}} \not \equiv 0 \pmod p$. Since $d \geq 2$, it follows that $2|B| \geq (p-1)\sum_{i=0}^k p^i +2= p^{k+1}+1\geq |A|+1$, where equality holds only if $d=2$ and $|A|=p^{k+1}$. 

(3) It remains to consider the case that $d=2$ and $|A|=p^{k+1}$. Equation~\eqref{eq1} implies that $q-1 \leq |A|^2+|A|-|A \cap (-A)|$ and thus $|A|\geq \sqrt{q}-1$. On the other hand,  $|A|\leq \sqrt{q}$, where equality holds only if $A=-A$. Therefore, $q$ is a square, $|A|=\sqrt{q}$ and $A=-A$. Since $q$ is odd, $0 \in A$ and $A \subset S_2 \cup \{0\}$. It follows that $A-A=A+A=S_2 \cup \{0\}$. Let $A'=a_0^{-1}A$, where $a_0$ is a nonzero element of $A$. Then we have $0,1 \in A'$, $|A'|=\sqrt{q}$ and $A'-A' \subset S_2 \cup \{0\}$. Now the conjecture of van Lint and MacWilliams \cite{vLM78}  mentioned in the introduction (first confirmed by Blokhuis \cite{Blo84}) implies that $A'=\F_{\sqrt{q}}$ and thus $A=a_0\F_{\sqrt{q}}$. However, this implies that $A+A=a_0\F_{\sqrt{q}} \neq S_2 \cup \{0\}$, a contradiction.  
\end{proof}

\subsection{Proof of Theorem~\ref{thm: Sidon} and its consequences}\label{sec:Sidon}

Proposition~\ref{prop:main} implies the following corollary on strong structural information on $A$, which will be crucial in the proof of our main results.

\begin{cor}\label{cor:Sidon}
Let $d \geq 2$ and let $q \equiv 1 \pmod d$ be an odd prime power. Assume that there is $A \subset \F_q$ such that $A\hat{+}A=S_d$ while $A+A \not \subset S_d \cup \{0\}$. If $|A|$ is odd or $0 \in A$, then $A$ is a Sidon set with $\{2a: a \in A\} \cap S_d=\emptyset$ and 
$$q=\frac{d|A|(|A|-1)}{2}+1.$$  
If $|A|$ is even, then 
$$
N=2\bigg\lceil \sqrt{\frac{q-1}{2d}} \bigg \rceil, \quad \text{and} \quad \sqrt{\frac{q-1}{2d}} \in \bigg(\frac{1}{2},\frac{3}{4}\bigg) \pmod 1.
$$
\end{cor}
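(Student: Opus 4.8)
The plan is to extract both assertions directly from Proposition~\ref{prop:main} by pairing it with the one elementary inequality that $A\hat{+}A=S_d$ forces. Throughout write $N=|A|$. Since $|S_d|=\tfrac{q-1}{d}\ge 1$ we have $N\ge 2$, and since $A\hat{+}A\subset S_d$ while $A+A\not\subset S_d\cup\{0\}$, Proposition~\ref{prop:main} applies. The elementary input is that $A\hat{+}A$ is the image of the $\binom N2$ unordered pairs $\{a,a'\}\subset A$ with $a\ne a'$ under addition, so
$$\frac{q-1}{d}=|S_d|=|A\hat{+}A|\le\binom N2=\frac{N(N-1)}{2},$$
with equality if and only if the sums $a_i+a_j$ ($i<j$) are pairwise distinct.

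Suppose first that $N$ is odd or $0\in A$. If $N$ is odd, Proposition~\ref{prop:main}(1) together with the counting bound yields
$$\frac{N(N-1)}{2}+\#\{a\in A:2a\in S_d\cup\{0\}\}\le\frac{q-1}{d}\le\frac{N(N-1)}{2},$$
so $\#\{a\in A:2a\in S_d\cup\{0\}\}=0$; in particular $0\notin A$, so the alternative $0\in A$ can occur only when $N$ is even, and there the ``moreover'' clause of Proposition~\ref{prop:main}(2) and the counting bound give
$$\frac{N(N-1)}{2}+\#\{a\in A:2a\in S_d\}\le\frac{q-1}{d}\le\frac{N(N-1)}{2},$$
so $\#\{a\in A:2a\in S_d\}=0$. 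In both cases $\tfrac{q-1}{d}=\tfrac{N(N-1)}{2}$, i.e. $q=\tfrac{dN(N-1)}{2}+1$, and $\{2a:a\in A\}\cap S_d=\emptyset$. Equality in the counting bound makes the sums $a_i+a_j$ ($i<j$) pairwise distinct; since $q$ is odd the elements $2a_i$ are distinct from one another, and each $2a_i$ lies outside $S_d=A\hat{+}A$ (if $a_i=0$ then $2a_i=0\notin S_d$), hence differs from every $a_j+a_k$ with $j<k$. Therefore all sums $a_i+a_j$ with $i\le j$ are distinct, i.e. $A$ is a Sidon set.

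Now suppose $N$ is even, and put $t=\sqrt{(q-1)/(2d)}$, so $\tfrac{q-1}{d}=2t^2$. The counting bound gives $4t^2\le N(N-1)<\bigl(N-\tfrac12\bigr)^2$, hence $2t<N-\tfrac12$, i.e. $t<\tfrac N2-\tfrac14$; the first inequality of Proposition~\ref{prop:main}(2) gives $(N-1)^2<(N-1)^2+1\le 4t^2$, hence $N-1<2t$, i.e. $t>\tfrac N2-\tfrac12$. Thus $\tfrac N2-\tfrac12<t<\tfrac N2-\tfrac14$, and since $N$ is even we have $\tfrac N2\in\Z$, so $t\bmod 1\in\bigl(\tfrac12,\tfrac34\bigr)$ and $\lceil t\rceil=\tfrac N2$, which is exactly $N=2\lceil\sqrt{(q-1)/(2d)}\rceil$.

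I do not expect a genuinely hard step; the content is the bookkeeping in combining Proposition~\ref{prop:main} with the counting bound, together with minor care about the interaction between the parity of $N$ and whether $0\in A$, and about the injectivity of $a\mapsto 2a$ in odd characteristic. The one point where the precise statement matters is the even case: it is the strict gap between $N(N-1)$ and $\bigl(N-\tfrac12\bigr)^2$, reinforced by the ``$+1$'' in Proposition~\ref{prop:main}(2), that confines the fractional part of $t$ to the length-$\tfrac14$ window $\bigl(\tfrac12,\tfrac34\bigr)$ rather than merely $\bigl(\tfrac12,1\bigr)$.
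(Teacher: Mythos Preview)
Your proposal is correct and follows essentially the same approach as the paper: you combine the counting bound $\frac{q-1}{d}\le\binom{N}{2}$ coming from $A\hat{+}A=S_d$ with the appropriate case of Proposition~\ref{prop:main} to force equality (when $N$ is odd or $0\in A$) or to pin down the fractional part of $\sqrt{(q-1)/(2d)}$ (when $N$ is even), and then upgrade ``weak Sidon'' to ``Sidon'' using injectivity of $a\mapsto 2a$ in odd characteristic together with $\{2a:a\in A\}\cap S_d=\emptyset$. Your organization is in fact slightly tidier than the paper's in that you treat the even-$N$ conclusion uniformly (without splitting off $0\notin A$), which is legitimate since the first inequality in Proposition~\ref{prop:main}(2) holds regardless of whether $0\in A$.
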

\begin{proof}
Let $|A|=\{a_1,a_2, \ldots, a_N\}$. Note that $A \hat{+} A=S_d$ implies that
\begin{equation}\label{eq:bound}
\frac{q-1}{d}=|S_d| \leq |A \hat{+} A|\leq \binom{N}{2}=\frac{N(N-1)}{2}.    
\end{equation}
Next, we divide the discussion according to the following two cases.

(1) $N$ is odd. Then Proposition~\ref{prop:main} implies that
$$
\frac{N(N-1)}{2}+\#\{a \in A: 2a \in S_d \cup \{0\}\} \leq \frac{q-1}{d}.
$$
It follows from equation~\eqref{eq:bound} that
$$
N(N-1)=\frac{2(q-1)}{d},
$$
that is, $A$ is a weak Sidon set (the sums $a_i+a_j$, for $i<j$, are all distinct). Moreover, $2a \not \in S_d \cup \{0\}$ for each $a \in A$. Since $q$ is odd, it is clear that $2a_i \neq 2a_j$ for $i \neq j$. Thus, we conclude that $A$ is a Sidon set.

(2) $N$ is even and $0 \in A$. Then Proposition~\ref{prop:main} implies that
$$\frac{N(N-1)}{2}+\#\{a \in A: 2a \in S_d\}  \leq \frac{q-1}{d}.$$
We can argue similarly to (1) to deduce the desired properties of $A$.

(3) $N$ is even and $0 \not \in A$. Then we have the weaker estimate
$$
\frac{(N-1)^2+1}{2} \leq \frac{q-1}{d}
$$
from Proposition~\ref{prop:main}. It follows from equation~\eqref{eq:bound} that
$$
(N-1)^2< \frac{2(q-1)}{d} \leq N(N-1)<(N-1/2)^2,
$$
equivalently,
$$
\frac{N}{2}-\frac{1}{2}< \sqrt{\frac{q-1}{2d}}<\frac{N}{2}-\frac{1}{4}.
$$
Recall that $N$ is even, so $\frac{N}{2}$ is an integer. Thus, $N$ is uniquely determined by 
\begin{equation}
N=2\bigg\lceil \sqrt{\frac{q-1}{2d}} \bigg \rceil, \quad \text{and} \quad \sqrt{\frac{q-1}{2d}} \in \bigg(\frac{1}{2},\frac{3}{4}\bigg) \pmod 1.
\end{equation}
\end{proof}

Next, we present the proof of Theorem~\ref{thm: Sidon}.
\begin{proof}[Proof of Theorem~\ref{thm: Sidon}]
Let $A \subset \F_q$ such that $A\hat{+}A=S_d$. In view of Corollary~\ref{cor:Sidon}, it suffices to show that $A+A \not \subset S_d \cup \{0\}$. Equivalently, it suffices to show that $A+A \neq S_d$ and $A+A \neq S_d \cup \{0\}$, which have been proved in Proposition~\ref{prop: A+AnotS_d}.
\end{proof}

With Theorem~\ref{thm: Sidon}, we deduce two interesting consequences below.

\begin{proof}[Proof of Theorem~\ref{thm:square}]
Let $q=p^{2r}$. Assume that there is $A \subset \F_q$ with $A \hat{+} A= S_d$. 

(1) If $|A|$ is odd, then Theorem~\ref{thm: Sidon} implies that $q=p^{2r}=k^2 |A|(|A|-1)+1.$ Thus,
$$
(2p^r)^2=4q=4k^2|A|(|A|-1)+1=(2k|A|-k)^2+(4-k^2).
$$
It follows that $(2p^r+2k|A|-k)$ is a divisor of $(k^2-4)$. In particular, $p \leq 2p^r+2k|A|-k \leq k^2-4<d$, violating the assumption that $p \equiv 1 \pmod d$.

(2) If $|A|$ is even, then Theorem~\ref{thm: Sidon} implies that 
$$
\frac{p^r-1}{2k}+\frac{\sqrt{p^{2r}-1}-(p^r-1)}{2k}=\frac{\sqrt{p^{2r}-1}}{2k}=\sqrt{\frac{q-1}{2d}} \in \bigg(\frac{1}{2},\frac{3}{4}\bigg) \pmod 1.
$$
Since $p \equiv 1 \pmod d$ and $d=2k^2$, it follows that $2k \mid (p^r-1)$ and thus
$$
\frac{\sqrt{p^{2r}-1}-(p^r-1)}{2k}\in \bigg(\frac{1}{2},\frac{3}{4}\bigg) \pmod 1.
$$
However, the number on the left-hand side is in $(0,\frac{1}{2k})$, while $\frac{1}{2k}\leq \frac{1}{6}<\frac{1}{2}$, a contradiction.
\end{proof}

\begin{proof} [Proof of Corollary~\ref{cor:Siegel}]
Let $p \equiv 1 \pmod d$ to be a prime and let $A \subset \F_{p^k}$ such that $A\hat{+}A=S_d(\F_{p^k})$. Assume that $|A|$ is odd or $0 \in A$. Then Theorem~\ref{thm: Sidon} implies that
$$p^k=\frac{d|A|(|A|-1)}{2}+1.$$  

First, consider the case $d=8$. Then we have $p^k=4|A|(|A|-1)+1=(2|A|-1)^2$, which is impossible since $k$ is odd.  

Next, we assume that $d \neq 8$. Note that
$$
C: y^2=f(x)=\frac{8}{d}(x^k-1)+1
$$
is a smooth affine curve with genus $g=\lceil k/2 \rceil-1 \geq 1$ since $f(x)=\frac{8}{d}(x^k-1)+1$ has no repeated root when $d \neq 8$ (see for example \cite[Section A.4.5]{HS00}). Thus, Siegel's theorem on integral points \cite[Theorem D.9.1]{HS00} implies the curve $C$ only has finitely many integral points. On the other hand, note that $(p, 2|A|-1)$ is an integral point on the curve $C$, thus $p$ is upper bounded by a constant $p_0$ depending only on $d$ and $k$.
\end{proof}

\subsection{A refinement on Proposition~\ref{prop:main} when $|A|$ is even.}

When $|A|$ is even and $0 \notin A$, Proposition~\ref{prop:main} is not strong enough for our purpose. Thus, we are led to pay extra attention to this more challenging situation and we establish the following proposition. 

\begin{prop}\label{prop: even}
Let $d \geq 2$ and let $q \equiv 1 \pmod d$ be an odd prime power. Assume that there is $A \subset \F_q$ with $|A|=N$ even, such that $A\hat{+}A=S_d$ while $A+A \not \subset S_d \cup \{0\}$. Then the following conditions necessarily hold:
\begin{itemize}
    \item $\binom{m+\frac{q-1}{d}}{m+1} \not \equiv 0 \pmod p$, where $m=\frac{N-2}{2}$.
     \item $\{2a: a \in A\} \cap S_d=\emptyset$.
     \item If $d=2$, then $q \equiv 3,5 \pmod 8$ and $A$ is a Sidon set with $0 \in A$.
\end{itemize}
\end{prop}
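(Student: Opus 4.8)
\textbf{Proof plan for Proposition~\ref{prop: even}.}

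The strategy is to revisit the Stepanov polynomial $f$ from the proof of Proposition~\ref{prop:main} in the even case (where $n = N-1$, $m = (N-2)/2$) and extract one more hyper-derivative condition than was used there. In the proof of Proposition~\ref{prop:main}(2), the roots $a_1, \dots, a_{N-1}$ each contributed multiplicity $m$, and $a_N$ was handled only when $0 \in A$; the crude bound $mN + 1 \le \deg f \le \tfrac{q-1}{d}$ leaves room. Here I would instead compute $E^{(m+1)} f(a_j)$ for $1 \le j \le n$. Using the Leibniz rule (Lemma~\ref{Leibniz}) together with the vanishing claim~\eqref{eq:claim} (now the surviving terms are $k=0$ and $k=m+1$, since $E^{(k)}[(x-a_i)^m] = 0$ for $k > m$), and invoking equation~\eqref{eq:obs} (valid since $a_i + a_j \in S_d$ whenever $i \neq j$, while the diagonal term is treated separately as in the derivation of~\eqref{m+1}), the identity $E^{(m+1)}f(a_j)$ collapses to a multiple of $c_j (2a_j)^m\big((2a_j)^{\frac{q-1}{d}} - 1\big)$ with coefficient a binomial $\binom{m + \frac{q-1}{d}}{m+1}$ coming from the $k = m+1$ contribution $E^{(m+1)}[(x+a_i)^{m+\frac{q-1}{d}}]$. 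Since $A + A \not\subset S_d \cup \{0\}$, there is $a_{j_0}$ with $2a_{j_0} \notin S_d \cup \{0\}$; if $\binom{m+\frac{q-1}{d}}{m+1} \equiv 0 \pmod p$ then $E^{(m+1)}f$ vanishes on all of $\{a_1,\dots,a_n\}$ as well, so every $a_j$ ($1 \le j \le n$) is a root of multiplicity $\ge m+1$, giving $n(m+1) = (N-1)\cdot \frac{N}{2} \le \deg f \le \frac{q-1}{d}$. Comparing with the lower bound $\frac{q-1}{d} = |S_d| \le \binom{N}{2} = \frac{N(N-1)}{2}$ forces $\frac{q-1}{d} = \frac{N(N-1)}{2}$, i.e. $A \hat{+} A = S_d$ with all restricted sums distinct — but then $A$ is a Sidon set, and one checks (as in the odd case of Corollary~\ref{cor:Sidon}) this is incompatible with the extra root multiplicity unless $N$ is small, ultimately contradicting $\frac{q-1}{d} \ge 3$ or the assumption $A + A \not\subset S_d \cup \{0\}$; this yields the first bullet.

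For the second bullet: with $\binom{m+\frac{q-1}{d}}{m+1} \not\equiv 0 \pmod p$ established, the formula for $E^{(m+1)}f(a_j)$ shows $E^{(m+1)}f(a_j) = 0$ iff $2a_j \in S_d \cup \{0\}$, so the multiplicity of each such $a_j$ as a root of $f$ jumps to at least $m+2$. Counting multiplicities, $nm + \#\{j : 2a_j \in S_d \cup \{0\}, 1 \le j \le n\} + (\text{contribution of } a_N) \le \deg f \le \frac{q-1}{d}$. If any $a \in A$ had $2a \in S_d$, this would push the total above $\frac{N(N-1)}{2} \ge \frac{q-1}{d}$, a contradiction; $2a = 0$ is excluded since $q$ is odd and if $0 \in A$ we are in a previously handled case (or it re-enters the count the same way). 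Hence $\{2a : a \in A\} \cap S_d = \emptyset$.

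For the third bullet, specialize to $d = 2$. Since $-1$ may or may not be a square, I would split on $q \bmod 4$ (equivalently on whether $-1 \in S_2$) and on $q \bmod 8$ (controlling whether $2 \in S_2$), and combine with $\{2a : a \in A\} \cap S_2 = \emptyset$ and the non-vanishing binomial condition $\binom{m + \frac{q-1}{2}}{m+1} \not\equiv 0 \pmod p$. When $q \equiv 1 \pmod 8$ one shows the binomial condition fails (via Kummer/Lucas on the base-$p$ digits of $\frac{q-1}{2}$ versus $m+1$), and when $q \equiv 7 \pmod 8$ one derives a contradiction from $2 \in S_2$ interacting with $\{2a\} \cap S_2 = \emptyset$ forcing $0 \notin A$ yet $A$ too large; this leaves $q \equiv 3, 5 \pmod 8$. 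Finally, in those cases, rerun the multiplicity count: the refined bound now reads $\frac{N(N-1)}{2} \le \frac{q-1}{d} \le \frac{N(N-1)}{2}$, equality throughout, so $A \hat{+} A = S_d$ is a perfect Sidon-type decomposition; to also get $0 \in A$, observe that if $0 \notin A$ then Proposition~\ref{prop:main}(2) only gave $\frac{(N-1)^2+1}{2} \le \frac{q-1}{d}$, which is strictly weaker than the equality just forced, a contradiction — so $0 \in A$, and then $A$ being Sidon follows from the odd-type argument. I expect the main obstacle to be the bookkeeping in the $d = 2$ case: carefully tracking the base-$p$ digit conditions that govern $\binom{m+\frac{q-1}{2}}{m+1} \bmod p$ while simultaneously juggling the quadratic-residue status of $2$ and $-1$, and ensuring the small-$q$ boundary cases (where the genus/counting inequalities are tight) are consistent.
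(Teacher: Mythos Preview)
There is a genuine gap at the core of your plan. You propose to reuse the polynomial $f$ of Proposition~\ref{prop:main} (with $n=N-1$, $m=(N-2)/2$) and extract information from $E^{(m+1)}f(a_j)$. But that proof already established $E^{(m)}f(a_{j_0}) = c_{j_0}(2a_{j_0})^m\big((2a_{j_0})^{(q-1)/d}-1\big) \neq 0$ for the index $j_0$ with $2a_{j_0}\notin S_d\cup\{0\}$; this is precisely how $f\not\equiv 0$ was shown. Hence $a_{j_0}$ is a root of $f$ of multiplicity exactly $m$, and whatever $E^{(m+1)}f(a_{j_0})$ equals is irrelevant to its multiplicity: your asserted inequality $n(m+1)\le\deg f$ never follows, and the deductions for the first two bullets collapse. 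Your Leibniz bookkeeping is also off: for $r=m+1$ the pairs $(k_1,k_2)$ with $k_1+k_2=m+1$ not covered by claim~\eqref{eq:claim} are $(1,m)$ and $(m+1,0)$, and the latter contributes $\binom{m+\frac{q-1}{d}}{m+1}\sum_i c_i(a_j+a_i)^{\frac{q-1}{d}-1}(a_j-a_i)^m$, which does not reduce to a diagonal term. For $d=2$, the Kummer/Lucas idea cannot separate $q\bmod 8$: the base-$p$ digits of $\tfrac{q-1}{2}$ are all $\tfrac{p-1}{2}$ regardless, and whether $\binom{m+\frac{q-1}{2}}{m+1}$ vanishes depends on the digits of $m+1=N/2$, not on $q\bmod 8$.

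The paper's route is structurally different. It introduces a \emph{new} auxiliary polynomial
\[
f(x)=-(-1)^{m+1}+\sum_{i=1}^{N} c_i (x+a_i)^{m+\frac{q-1}{d}}(x-a_i)^{m+1}
\]
(using all $N$ points, with the exponent on $(x-a_i)$ bumped to $m+1$), shows each $a_j$ is a root of multiplicity $\ge m+1$, and observes that if $f\not\equiv 0$ then $N(m+1)=\tfrac{N^2}{2}\le \tfrac{q-1}{d}$, contradicting $\tfrac{q-1}{d}\le\binom{N}{2}$. Thus $f\equiv 0$, and the conclusions are mined from this \emph{identity}, not from a degree-vs-roots inequality. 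Expanding $E^{(m+1)}f(a_j)=0$ and using the explicit Vandermonde formula for $c_i$ to evaluate $\sum_i c_i/(a_j+a_i)$ yields
\[
c_j\big((2a_j)^{(q-1)/d}-1\big)\prod_{i=1}^N(a_j+a_i)=\binom{m+\tfrac{q-1}{d}}{m+1}\cdot 2a_j,
\]
from which both the nonvanishing of the binomial and $\{2a:a\in A\}\cap S_d=\emptyset$ are immediate. For $d=2$ one evaluates $f(0)=0$ to produce some $a_k\in S_2$, whence $2\notin S_2$ and $q\equiv 3,5\pmod 8$; then $A\subset S_2\cup\{0\}$, and if $0\notin A$ one adjoins $0$ to obtain a set of odd size and invokes Corollary~\ref{cor:Sidon} for a contradiction.
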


\begin{proof}
Let $A=\{a_1, a_2, \ldots, a_N\}$ and let $m=\frac{N-2}{2}$. Consider instead the following auxiliary polynomial 
\begin{equation}\label{poly2}
f(x)=-(-1)^{m+1}+\sum_{i=1}^N c_i (x+a_i)^{m+\frac{q-1}{d}} (x-a_i)^{m+1}\in \F_q[x],    
\end{equation}
where $c_1,c_2,...,c_N$ is the unique solution of the following system of equations:
\begin{equation} \label{system2} 
\left\{
\TABbinary\tabbedCenterstack[l]{
\sum_{i=1}^N c_i a_i^j=0,  \quad 0 \leq j \leq 2m=N-2\\\\
\sum_{i=1}^N c_i a_i^{N-1}=1.
}\right.    
\end{equation}

With this new polynomial~\eqref{poly2}, similar arguments as in the proof of Proposition~\ref{prop:main} lead to the following:
\begin{itemize}
    \item $\deg f \leq \frac{q-1}{d}$;
    \item $f(a_j)=0$ for each $1 \leq j \leq N$;
    \item an analogous claim, namely
\begin{equation}\label{eq:claim2}
\sum_{i=1}^N c_i E^{(k_1)}[(x+a_i)^{m+\frac{q-1}{d}}](a_j) \cdot E^{(k_2)}[(x-a_i)^{m+1}](a_j)=0.
\end{equation}
holds for each $1 \leq j \leq N$, provided that $0 \leq k_1,k_2\leq m$ and $1 \leq k_1+k_2 \leq m+1$.
\item Via claim~\eqref{eq:claim2}, Lemma~\ref{Leibniz} then allows us to deduce that for each $1 \leq j \leq N$, the multiplicity of $a_j$ as a root of $f$ is at least $m+1$ by showing that $E^{(r)}f(a_j)=0$ for $1 \leq r \leq m$. 
\end{itemize}

If $f$ is not identically zero, then we obtain the inequality
$$
\frac{N^2}{2}=N(m+1)\leq \deg f \leq \frac{q-1}{d},
$$
which contradicts inequality~\eqref{eq:bound} (a consequence of $A \hat{+} A=S_d$). Thus, we deduce that the polynomial $f$ we constructed in equation~\eqref{poly2} must be identically zero. Next, we use this strong algebraic condition to deduce some structural information of $A$.

Since $f$ is identically zero, in particular, $E^{(m+1)} f (a_j)=0$ for each $1 \leq j \leq N$. For each $1\leq j \leq N$, note that claim~\eqref{eq:claim2} and Lemma~\ref{Leibniz} together imply that
\begin{align}
E^{(m+1)} f (a_j)
&= \sum_{i=1}^N c_i E^{(0)}[(x+a_i)^{m+\frac{q-1}{d}}](a_j) \cdot E^{(m+1)}[(x-a_i)^{m+1}](a_j) \notag\\
&+\binom{m+\frac{q-1}{d}}{m+1} \sum_{i=1}^N c_i E^{(m+1)}[(x+a_i)^{m+\frac{q-1}{d}}](a_j) \cdot E^{(0)}[(x-a_i)^{m+1}](a_j) \notag\\
&= \sum_{i=1}^N c_i (a_j+a_i)^{m+\frac{q-1}{d}} +\binom{m+\frac{q-1}{d}}{m+1} \sum_{i=1}^N c_i (a_j+a_i)^{\frac{q-1}{d}-1} (a_j-a_i)^{m+1}. \label{m++}
\end{align}
In the following discussion, let $1 \leq j \leq N$ be such that $a_j \neq 0$. The same computation as in equation~\eqref{m+1} shows that
$$
\sum_{i=1}^N c_i (a_j+a_i)^{m+\frac{q-1}{d}}=c_j (2a_j)^m \bigg((2a_j)^{\frac{q-1}{d}}-1\bigg).
$$
Also note that for each $i \neq j$, 
$$
(a_j+a_i)^{\frac{q-1}{d}-1} (a_j-a_i)^{m+1}=\frac{(a_j-a_i)^{m+1}}{a_j+a_i},
$$
and the above equation holds even when $i=j$ and $a_j=0$, since both sides are zero. In view of system~\eqref{system2}, we have the following: 
\begin{align*}
\sum_{i=1}^N c_i \frac{(a_j-a_i)^{m+1}}{a_j+a_i}
&=\sum_{i=1}^N c_i \frac{(2a_j-(a_j+a_i))^{m+1}}{a_j+a_i}\\
&= \sum_{k=0}^{m+1} \binom{m+1}{k} (2a_j)^k (-1)^{m+1-k} \bigg(\sum_{i=1}^N c_i (a_j+a_i)^{m-k}\bigg)\\
&=(2a_j)^{m+1} \sum_{i=1}^N \frac{c_i}{a_j+a_i}.
\end{align*}
Therefore, using the above three equations, equation~\eqref{m++} simplifies to
\begin{equation}\label{0=}
0=E^{(m+1)} f (a_j)=c_j (2a_j)^m \bigg((2a_j)^{\frac{q-1}{d}}-1\bigg) +\binom{m+\frac{q-1}{d}}{m+1} (2a_j)^{m+1} \sum_{i=1}^N \frac{c_i}{a_j+a_i}.
\end{equation}

Using the formula for the inverse of a Vandermonde matrix (see for example \cite[Section 0.9.11]{HJ13}), it is easy to verify that
$$
c_i=\bigg(\prod_{\substack{1 \leq k \leq N\\k \neq i}} (a_i-a_k)\bigg)^{-1}.
$$

Consider the polynomial
$$
g(x)=\sum_{i=1}^N c_i\prod_{k \neq i} (x+a_k)=\sum_{i=1}^N \frac{\prod_{k \neq i} (x+a_k)}{\prod_{k \neq i} (a_i-a_k)} \in \F_q[x].
$$
Note that $g$ has degree at most $N-1$, and for each $1 \leq i \leq N$, we have $g(-a_i)=(-1)^{N-1}=-1$. Therefore, $g \equiv -1$ and thus
$$
\sum_{i=1}^N \frac{c_i}{a_j+a_i}=\frac{g(a_j)}{\prod_{i=1}^N (a_j+a_i)}=\frac{-1}{\prod_{i=1}^N (a_j+a_i)}.
$$

As a summary, if $a_j \neq 0$, we have shown that
\begin{align*}
c_j (2a_j)^m \bigg((2a_j)^{\frac{q-1}{d}}-1\bigg) =\binom{m+\frac{q-1}{d}}{m+1}  (2a_j)^{m+1} \frac{1}{\prod_{i=1}^N (a_j+a_i)},
\end{align*}
equivalently
\begin{equation}\label{eq:nonzero}
c_j \bigg((2a_j)^{\frac{q-1}{d}}-1\bigg) \cdot \prod_{i=1}^N (a_j+a_i)= \binom{m+\frac{q-1}{d}}{m+1} \cdot 2a_j.
\end{equation}

Since $A+A \not \subset S_d \cup \{0\}$, there is $1\leq j_0\leq N$ such that $2a_{j_0} \not \in S_d \cup \{0\}$. By setting $j=j_0$ in equation~\eqref{eq:nonzero}, it follows that $\binom{m+\frac{q-1}{d}}{m+1}\neq 0$. Now if $a_j \neq 0$, equation~\eqref{eq:nonzero} implies that $(2a_j)^{\frac{q-1}{d}}\neq 1$, that is, $2a_j \notin S_d$. Therefore, $\{2a: a \in A\} \cap S_d=\emptyset$.

Finally, assume that $d=2$. We have
\begin{align*}
0=f(0)
&= -(-1)^{m+1}+\sum_{i=1}^N a_i^{m+\frac{q-1}{2}} (-a_i)^{m+1}\\
&= -(-1)^{m+1}+ (-1)^{m+1} \cdot \sum_{i=1}^N c_i a_i^{N-1+\frac{q-1}{2}}.
\end{align*}
In view of the equation $\sum_{i=1}^N c_i a_i^{N-1}=1$ in system~\eqref{system2}, we have
$$
\sum_{i=1}^N c_i a_i^{N-1+\frac{q-1}{2}}=\sum_{i=1}^N c_i a_i^{N-1}=1.
$$
Therefore, $\sum^* c_i a_i^{N-1}=1$, where the sum is over those $i$ such that $a_i \in S_2$. In particular, there is $1\leq k\leq N$ such that $a_k \in S_2$. If $q \equiv \pm 1 \pmod 8$, then $2 \in S_2$ and thus $2a_k \in S_2$, contradicting the previous necessary condition. Thus, we must have $q \equiv 3,5 \pmod 8$ so that $2 \not \in S_2$. It follows that $A \subset S_2 \cup \{0\}$ since $2 \not \in S_2$ and $\{2a: a \in A\} \cap S_2=\emptyset$. 

Assume that $0 \not \in A$ so that $A \subset S_2$. Set $A'=A \cup \{0\}$. We still have $A' \hat{+}A'=S_2$ since $A \subset S_2$. Since $|A|$ is even, it follows that $|A'|$ is odd and $A'$ is a Sidon set by Corollary~\ref{cor:Sidon}. In particular, for each $a \in A$, we have $a=0+a \not \in A+A$ while $a \in S_2$, which implies that $A\hat{+}A\neq S_2$, a contradiction. Therefore, we must have $0 \in A$. Consequently, Corollary~\ref{cor:Sidon} implies that $A$ is a Sidon set. 
\end{proof}

\subsection{Proof of Theorem~\ref{thm:main}}

\begin{proof}[Proof of Theorem~\ref{thm:main}]
Let $q=p^r>13$ and let $A \subset \F_q$. Suppose otherwise that $A \hat{+} A=S_2$. Let $|A|=N$. By Proposition~\ref{prop: A+AnotS_d}, Corollary~\ref{cor:Sidon} and Proposition~\ref{prop: even}, we deduce that $A$ is a Sidon set and $q=p^r=N^2-N+1$. If $r=1$, then $q=p$, and the possibility that $A \hat{+} A =S_2$ has been ruled out by Shkredov \cite[Theorem 1.2]{S14}. 

Next, we consider the case $r \geq 2$. It turns out that $p^r=N(N-1)+1$ (where $r \geq 2$) is a special case of the well-studied Nagell-Ljunggren equation. A classical result of Nagell \cite{N1920} (see also the survey \cite{BM02} by Bugeaud and Mignotte) implies that the only solution to $$p^r=N(N-1)+1=\frac{(N-1)^3-1}{N-1}$$ where $r \geq 2$ is $(p,r, N)=(7,3,19)$. Thus, $q=343$ and $|A|=N=19$. However, in this case, a simple code via SageMath indicates that such $A$ does not exist; in fact, the largest $B \subset \F_{343}$ such that $B \hat{+}B \subset S_2$ has size $10$. This completes the proof.
\end{proof}

\begin{rem}
When $q \in \{3,7,13\}$, as already shown by Shkredov \cite[Theorem 1.2]{S14}, there is $A \subset \F_q$ such that $A \hat{+} A=S_2$. Thus, from the above proof, we can further deduce that if $q$ is an odd prime power and $A \subset \F_q$ such that $A\hat{+}A= S_2$, then the only possibilities are the following: 
\begin{itemize}
    \item $q=3, A=\{0,1\}$.
    \item $q=7, A=\{3,5,6\}$.
    \item $q=13$, $A=\{0,1,3,9\}$ and $A=\{0,4,10,12\}$.
\end{itemize}
Note that in each of the above cases, $A$ is a Sidon set, and $q=|A|^2-|A|+1$.
\end{rem}

\subsection{Proof of Theorem~\ref{thm:density}}
Let $\mathcal{P}$ be the set of primes. For $d \geq 2$, we define $\mathcal{P}_d=\{p \in \mathcal{P}: p \equiv 1 \pmod d\}$. To prove Theorem~\ref{thm:density}, we need the following result on uniform distribution due to Bergelson et.~al. \cite[Corollary 2.3]{BKMST14}.

\begin{lem}[Bergelson et.~al. \footnote{There are some inaccuracies in the statement of \cite[Corollary 2.3]{BKMST14}. A corrected version can be found in \cite[Corollary 6.3]{Y21}.}]\label{lem:ud}
Let $0 <\theta_1 <\theta_2 < \cdots< \theta_\ell$ and let $\gamma_1, \gamma_2, \ldots, \gamma_\ell$ be nonzero real numbers such that  $\gamma_j \not \in \mathbb{Q}$ if $\theta_j \in \N$. Then for any $h \in \Z$ and positive integer $d$,  the sequence $$\bigg(\big(\gamma_1 (p-h)^{\theta_1},\gamma_2 (p-h)^{\theta_2}, \ldots, \gamma_\ell (p-h)^{\theta_\ell}\big)\bigg)_{p \in \mathcal{P}_d}$$ is equidistributed modulo $1$ in the $\ell$-dimensional torus $\mathbb{T}^\ell=\R^\ell/\Z^\ell$.
\end{lem}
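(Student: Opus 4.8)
The plan is to establish the equidistribution via \emph{Weyl's criterion}: writing $e(t)=e^{2\pi i t}$, it suffices to show that for every nonzero integer vector $\mathbf{k}=(k_1,\dots,k_\ell)\in\Z^\ell$ one has
$$
\frac{1}{\#\{p\le x:\,p\in\mathcal{P}_d\}}\sum_{\substack{p\le x\\ p\in\mathcal{P}_d}} e\big(\phi_{\mathbf{k}}(p)\big)\longrightarrow 0\qquad(x\to\infty),
$$
where $\phi_{\mathbf{k}}(t)=\sum_{j=1}^\ell k_j\gamma_j(t-h)^{\theta_j}$ is the collapsed one-dimensional phase. This reduces the multidimensional torus problem to a single exponential sum over primes in the progression $p\equiv1\pmod d$. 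Setting $j_0=\max\{j:k_j\neq0\}$, the leading behaviour of $\phi_{\mathbf{k}}$ is governed by $k_{j_0}\gamma_{j_0}(t-h)^{\theta_{j_0}}$ with $k_{j_0}\gamma_{j_0}\neq0$, and since $\theta_1<\cdots<\theta_\ell$ this term dominates all derivatives for large $t$.

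First I would strip off the congruence constraint by orthogonality of the Dirichlet characters modulo $d$, writing $\mathbbm{1}_{n\equiv1(d)}=\frac{1}{\phi(d)}\sum_{\chi\bmod d}\chi(n)$ (for $(n,d)=1$, the finitely many $p\mid d$ being negligible) and passing from primes to the von Mangoldt weight by partial summation. This reduces matters to proving, for each character $\chi\bmod d$, that $\sum_{n\le x}\Lambda(n)\chi(n)e(\phi_{\mathbf{k}}(n))=o(x)$. I would then apply Vaughan's identity to decompose this into Type I sums $\sum_m a_m\sum_n \chi(mn)e(\phi_{\mathbf{k}}(mn))$ and Type II bilinear sums, so that everything is reduced to smooth inner exponential sums $\sum_{n\in I}e(\phi_{\mathbf{k}}(mn))$ over dyadic intervals $I$.

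For these smooth sums I would invoke derivative estimates: since $\phi_{\mathbf{k}}^{(r)}(t)\asymp (t-h)^{\theta_{j_0}-r}$ for $t$ large, choosing $r$ so that $\theta_{j_0}-r$ lies in a favourable range lets van der Corput's method (or exponent pairs) deliver the required saving whenever $\theta_{j_0}\notin\N$, because the power function then genuinely oscillates and no integer-exponent degeneracy can occur. The delicate case is $\theta_{j_0}\in\N$: there $(t-h)^{\theta_{j_0}}$ is a polynomial, so pure derivative oscillation is unavailable, and I would instead appeal to Vinogradov's bound for polynomial exponential sums over primes, whose hypothesis is precisely that the leading coefficient $k_{j_0}\gamma_{j_0}$ be irrational --- guaranteed by the standing assumption $\gamma_{j_0}\notin\mathbb{Q}$ together with $k_{j_0}\in\Z\setminus\{0\}$ (lower-order integer-exponent terms, whose coefficients may be rational, are harmless since the top coefficient already forces cancellation under Weyl differencing). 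I expect the main obstacle to be the uniform treatment of the Type II bilinear sums in this polynomial regime, where Weyl differencing must be combined with the minor-arc analysis and one must verify that the mixed integer/non-integer exponents and the character twist do not spoil the cancellation; by contrast the non-integer case is comparatively soft once the van der Corput derivative bounds are in hand.
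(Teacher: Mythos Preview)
The paper does not supply a proof of this lemma: it is quoted directly from \cite[Corollary~6.3]{Y21} and \cite[Corollary~2.3]{BKMST14} and invoked as a black box in the proof of Theorem~\ref{thm:density}. There is therefore no in-paper argument to compare your proposal against.

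That said, your outline is essentially the strategy underlying those references. Weyl's criterion, detection of the progression $p\equiv1\pmod d$ via Dirichlet characters, passage to $\Lambda$-weights and Vaughan's identity, and then van der Corput derivative estimates when the dominant exponent $\theta_{j_0}$ is non-integer versus Vinogradov--Weyl when $\theta_{j_0}\in\N$ with irrational leading coefficient $k_{j_0}\gamma_{j_0}$ --- this is precisely the template. You also correctly isolate the genuinely delicate case: when $\theta_{j_0}\in\N$ but some lower $\theta_j$ with $k_j\neq0$ is \emph{not} an integer, the collapsed phase $\phi_{\mathbf{k}}$ is not a polynomial, so neither the pure Vinogradov polynomial bound nor a straight van der Corput argument applies off the shelf. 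In practice one interleaves Weyl differencing (which lowers the degree of the polynomial part) with derivative bounds controlling the surviving fractional-power tail, until the phase falls into a regime where a $k$-th derivative estimate gives the saving. Your sketch flags this as ``the main obstacle'' without spelling out the resolution, which is reasonable for a plan but would need to be made explicit for a self-contained proof; since the paper simply cites the result, no more is required here.
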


Now we use the tools developed in this section to establish Theorem~\ref{thm:density}.

\begin{proof}[Proof of Theorem~\ref{thm:density}]
Let 
$$
\mathcal{B}_d=\{p \in \mathcal{P}_d: \text{there is some } A \subset \F_q \text{ with } A\hat{+}A=S_d(\F_q).\}
$$
Our aim is to show the relative upper density of $\mathcal{B}_d \subset \mathcal{P}_d$ is at most $\frac{1}{4} \cdot (\frac{d-1}{d})^r$. 

We first consider the case where $s$ is odd. In this case, $s=2r+1$. For simplicity, for each $p \in \mathcal{P}_d$, we write $q=p^{2r+1}$ and
$$
\alpha_p=\bigg\lceil \sqrt{\frac{q-1}{2d}} \bigg \rceil. 
$$
Let $p \in \mathcal{B}_d$. Then there is $A \subset \F_q$ with $A\hat{+}A=S_d$. By Proposition~\ref{prop: A+AnotS_d}, $A+A \not \subset S_d \cup \{0\}$. Let $|A|=N$. 

(1) If $N$ is odd, then Theorem~\ref{thm: Sidon} implies that
$$
q=\frac{dN(N-1)}{2}+1.
$$

(2) If $N$ is even, then Theorem~\ref{thm: Sidon} implies that $N=2\alpha_p$ and
\begin{equation}
\sqrt{\frac{q-1}{2d}} \in \bigg(\frac{1}{2},\frac{3}{4}\bigg) \pmod 1.
\end{equation}

Let $m=\frac{N-2}{2}$. By Proposition~\ref{prop: A+AnotS_d}, $A+A \not \subset S_d \cup \{0\}$ so that we can apply Proposition~\ref{prop: even} to deduce that
$$
\binom{m+\frac{q-1}{d}}{m+1} \not \equiv 0 \pmod p.
$$
Note that $\alpha_p \in (p^r, p^{r+1})$. Write $\alpha_p=m+1=(c_r, c_{r-1}, \ldots, c_0)_p$ in its base-$p$ representation. Then Kummer's theorem implies that $c_j \leq \frac{(d-1)(p-1)}{d}$ for $j \geq 1$ and $c_0 \leq \frac{(d-1)(p-1)}{d}+1$. Note that $c_j=\lfloor p\{\alpha_p/p^{j+1}\} \rfloor$ for $0 \leq j \leq r$, where $\{x\}$ denotes the fractional part of a real number $x$. Indeed, in view of the base-$p$ representation of $\alpha_p$, we have $$c_jp^j+\sum_{i=j+1}^r c_ip^i\leq \alpha_p <(c_j+1)p^j+\sum_{i=j+1}^r c_ip^i,$$ thus $c_j/p\leq \{\alpha_p/p^{j+1}\}<(c_j+1)/p$, equivalently, 
$c_j=\lfloor p\{\alpha_p/p^{j+1}\} \rfloor$.

The above discussion shows that $\mathcal{B}_d \subset \mathcal{C}_d \cup \mathcal{D}_d$, where
$$
\mathcal{C}_d= \bigg\{p \in \mathcal{P}_d: p^{2r+1}=\frac{dk(k-1)}{2}+1 \text{ for some positive integer } k\bigg\}
$$
and
\begin{multline*}
\mathcal{D}_d= \bigg\{p \in \mathcal{P}_d: \sqrt{\frac{q-1}{2d}} \in \bigg[\frac{1}{2},\frac{3}{4}\bigg) \pmod 1\bigg\} \bigcap \\ \bigcap_{j=1}^{r}  \bigg\{p \in \mathcal{P}_d: \frac{\alpha_p}{p^{j}} \in \bigg [0,\frac{(d-1)(p-1)}{dp}+\frac{2}{p}\bigg) \pmod 1  \bigg\}.
\end{multline*}

In view of the prime number theorem for arithmetic progressions and Corollary~\ref{cor:Siegel}, it is clear that the relative density of $\mathcal{C}_d \subset \mathcal{P}_d$ is $0$. Note that as $p \to \infty$,
\begin{equation}\label{eq:asymptotic}
\alpha_p=\sqrt{\frac{q}{2d}}+o(1)=\frac{p^{r+1/2}}{\sqrt{2d}}+o(1), \quad \frac{(d-1)(p-1)}{dp}+\frac{2}{p}=\frac{d-1}{d}+o(1).    
\end{equation}
Thus, in view of Lemma~\ref{lem:ud}, it is easy to verify that the relative upper density of $\mathcal{B}_d \subset \mathcal{P}_d$ is at most the relative upper density of $\widetilde{\mathcal{D}_d} \subset \mathcal{P}_d$, where $\widetilde{\mathcal{D}_d}$ is essentially obtained by dropping the $o(1)$ error terms from equation~\eqref{eq:asymptotic}. More precisely, 
\begin{multline*}
\widetilde{\mathcal{D}_d}=\bigg\{p \in \mathcal{P}_d: \frac{p^{r+1/2}}{\sqrt{2d}} \in \bigg(\frac{1}{2},\frac{3}{4}\bigg) \pmod 1, \\ \frac{p^{j+1/2}}{\sqrt{2d}} \in \bigg(0,\frac{d-1}{d}\bigg) \pmod 1 \text{ for each } 0 \leq j \leq r-1 \bigg\}.
\end{multline*}
Lemma~\ref{lem:ud} then implies that the relative density of $\widetilde{\mathcal{D}_d} \subset \mathcal{P}_d$ is $\frac{1}{4} \cdot (\frac{d-1}{d})^r$, as desired.

Finally, we consider the case where $s$ is even. In this case, $s=2r+2$. Using an almost identical argument, we can show that the relative upper density of $\mathcal{B}_d \subset \mathcal{P}_d$ is at most the relative upper density of $\widetilde{\mathcal{D}_d} \subset \mathcal{P}_d$, where
\begin{multline*}
\widetilde{\mathcal{D}_d}=\bigg\{p \in \mathcal{P}_d: \frac{p^{r+1}}{\sqrt{2d}} \in \bigg(\frac{1}{2},\frac{3}{4}\bigg) \pmod 1, \\ \frac{p^{j}}{\sqrt{2d}} \in \bigg(0,\frac{d-1}{d}\bigg) \pmod 1 \text{ for each } 1 \leq j \leq r \bigg\}.
\end{multline*}
Since $2d$ is not a perfect square, it follows that $\sqrt{2d}$ is not rational, and thus Lemma~\ref{lem:ud} implies that the relative density of $\widetilde{\mathcal{D}_d} \subset \mathcal{P}_d$ is $\frac{1}{4} \cdot (\frac{d-1}{d})^r$, as desired.
\end{proof}

\section{Applications to the question of Erd\H{o}s and Moser}\label{sec5}

First, we recall Gallagher's larger sieve \cite{G71}.
\begin{lem}\label{GS}  
Let $N$ be a natural number and  $A\subset\{1,2,\ldots, N\}$.  Let ${\mathcal P}$ be a set of primes. For each prime $p \in {\mathcal P}$, let $A_p=A \pmod{p}$. For any $1<Q\leq N$, we have
$$
 |A|\leq \frac{\underset{p\leq Q, p\in \mathcal{P}}\sum\log p - \log N}{\underset{p\leq Q, p \in \mathcal{P}}\sum\frac{\log p}{|A_p|}-\log N},
$$
provided that the denominator is positive.
\end{lem}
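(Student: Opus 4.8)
The plan is to run the standard ``collision counting'' argument behind Gallagher's larger sieve, which needs only Cauchy--Schwarz and the trivial estimate $\operatorname{rad}(k) \leq k$. For a prime $p$ and $a \in \mathbb{Z}/p\mathbb{Z}$ write $\nu_p(a) = \#\{n \in A : n \equiv a \pmod p\}$, so that $\sum_{a} \nu_p(a) = |A|$ while exactly $|A_p|$ of the summands $\nu_p(a)$ are nonzero. The object to study is the weighted collision count
$$
T = \sum_{\substack{p \leq Q \\ p \in \mathcal{P}}} \log p \cdot \#\{(m,n) \in A^2 : p \mid (m-n)\} = \sum_{\substack{p \leq Q \\ p \in \mathcal{P}}} \log p \sum_{a \in \mathbb{Z}/p\mathbb{Z}} \nu_p(a)^2 .
$$
I would bound $T$ from below and from above and then compare. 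For the lower bound, Cauchy--Schwarz gives $\sum_{a} \nu_p(a)^2 \geq \big(\sum_a \nu_p(a)\big)^2/|A_p| = |A|^2/|A_p|$ for each $p$, hence $T \geq |A|^2 \sum_{p \leq Q,\, p \in \mathcal{P}} (\log p)/|A_p|$.

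For the upper bound I would separate the diagonal from the off-diagonal pairs. The $|A|$ pairs with $m = n$ contribute exactly $|A|\sum_{p \leq Q,\, p \in \mathcal{P}} \log p$. For an off-diagonal pair $(m,n)$ the difference $m-n$ is a nonzero integer with $|m-n| \leq N-1 < N$, so $\sum_{p \mid m-n} \log p = \log \operatorname{rad}(m-n) \leq \log|m-n| < \log N$; summing this over all $|A|^2 - |A|$ off-diagonal pairs and swapping the order of summation bounds the off-diagonal part of $T$ by $(|A|^2 - |A|)\log N$. Combining with the lower bound yields
$$
|A|^2 \sum_{\substack{p \leq Q \\ p \in \mathcal{P}}} \frac{\log p}{|A_p|} \;\leq\; T \;<\; |A| \sum_{\substack{p \leq Q \\ p \in \mathcal{P}}} \log p + (|A|^2 - |A|)\log N .
$$

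Finally I would rearrange: writing $(|A|^2 - |A|)\log N = |A|^2 \log N - |A|\log N$ and collecting terms gives
$$
|A|^2\Bigg( \sum_{\substack{p \leq Q \\ p \in \mathcal{P}}} \frac{\log p}{|A_p|} - \log N \Bigg) < |A| \Bigg( \sum_{\substack{p \leq Q \\ p \in \mathcal{P}}} \log p - \log N \Bigg);
$$
dividing by $|A| \geq 1$ (the statement presupposes $A \neq \emptyset$, since otherwise $|A_p| = 0$) and then by the denominator — which is positive by hypothesis — produces the asserted inequality (in fact with strict inequality). The only point that requires any care is the treatment of the diagonal: one must peel off the $m = n$ pairs \emph{before} invoking the radical estimate, because every prime divides $0$ and the bound $\log\operatorname{rad}(m-n) < \log N$ is vacuous there; and one must retain the resulting $-|A|\log N$ term rather than discarding it via $|A|^2 - |A| \leq |A|^2$, since it is precisely this term that sharpens the numerator from $\sum \log p$ to $\sum \log p - \log N$. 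Beyond this bookkeeping the argument is entirely routine.
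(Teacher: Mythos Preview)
Your argument is the standard proof of Gallagher's larger sieve and is correct. Note that the paper does not supply its own proof of this lemma: it merely states the inequality and cites Gallagher's original paper, so there is nothing to compare against beyond observing that what you wrote is precisely the classical collision-counting argument that underlies the cited reference.
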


We conclude the paper with the proof of Theorem~\ref{thm:integers}.

\begin{proof}[Proof of Theorem~\ref{thm:integers}]
Let $A\subset\{1,2,\ldots, N\}$ such that $A \hat{+} A$ is contained in the set of $d$-th powers. To apply the Gallagher sieve inequality, we consider the set of primes 
$\mathcal{P}=\{p: p \equiv 1 \pmod d\}.$ For each prime $p \in \mathcal{P}$, denote by $A_{p}$ the image of $A \pmod{p}$. For each $p \in \mathcal{P}$, we can naturally view $A_p$ as a subset of $\F_p$ so that $A_p \hat{+} A_p \subset S_d(\F_p) \cup \{0\}$, and Theorem~\ref{thm:ub} implies that 
\begin{equation}\label{Ap}
|A_{p}| \leq \sqrt{2(p-1)/d+1}+2.    
\end{equation}
Set $Q=\frac{2}{d}(\phi(d)\log N)^2$. By the prime number theorem and standard partial summation, we have
\begin{equation}\label{prime}    
\sum_{p \in \mathcal{P},p \le Q}\log p \sim \frac{Q}{\phi(d)}, \quad \sum_{p \in \mathcal{P},p \le Q} \frac{\log p}{\sqrt{p}} \sim \frac{2\sqrt{Q}}{\phi(d)}.
\end{equation}
Thus, applying Lemma~\ref{GS} and combining the above estimates~\eqref{Ap} and \eqref{prime}, we conclude that 
\begin{align*}
|A| 
&\leq \frac{\sum_{p \in \mathcal{P},p \le Q}\log p - \log N}{\sum_{p \in \mathcal{P},p \le Q} \frac{\log p}{|A_{p}|}-\log N}
\leq \frac{\frac{(1+o(1))Q}{\phi(d)}- \log N}{(1+o(1))\sqrt{\frac{d}{2}}\cdot \frac{2\sqrt{Q}}{\phi(d)}-\log N}\\
&\leq \frac{\frac{2+o(1)}{d}\phi(d)(\log N)^2}{(2+o(1))\log N-\log N}=\frac{(2+o(1))\phi(d)}{d}\log N.
\end{align*}
\end{proof}

\section*{Acknowledgements}
The author thanks Shamil Asgarli, Ritesh Goenka, Seoyoung Kim, Shuxing Li, Greg Martin, Karen Meagher, Venkata Raghu Tej Pantangi, Misha Rudnev, Ilya Shkredov, J\'ozsef Solymosi, Zixiang Xu, and Semin Yoo for inspiring discussions.  The author is also grateful to anonymous referees for their valuable comments and suggestions. The research of the author was supported in part by an NSERC fellowship.

\bibliographystyle{abbrv}
\bibliography{main}

\end{document}